\newtheorem{theorem}{Theorem}[section]
\newtheorem*{theorem*}{Theorem}
\newtheorem*{corollary*}{Corollary}
\newtheorem{lemma}[theorem]{Lemma}
\newtheorem{proposition}[theorem]{Proposition}
\newtheorem{corollary}[theorem]{Corollary}
\numberwithin{equation}{section}
\begin{document}

\def\ssm{\smallsetminus}
\newcommand{\tmname}[1]{\textsc{#1}}
\newcommand{\tmop}[1]{\operatorname{#1}}
\newcommand{\tmsamp}[1]{\textsf{#1}}
\newenvironment{enumerateroman}{\begin{enumerate}[i.]}{\end{enumerate}}
\newenvironment{enumerateromancap}{\begin{enumerate}[I.]}{\end{enumerate}}

\newcounter{problemnr}
\setcounter{problemnr}{0}
\newenvironment{problem}{\medskip
  \refstepcounter{problemnr}\small{\bf\noindent Problem~\arabic{problemnr}\ }}{\normalsize}
\newenvironment{enumeratealphacap}{\begin{enumerate}[A.]}{\end{enumerate}}
\newcommand{\tmmathbf}[1]{\boldsymbol{#1}}

\def\paral{/\kern-0.55ex/}
\def\parals_#1{/\kern-0.55ex/_{\!#1}}
\def\bparals_#1{\breve{/\kern-0.55ex/_{\!#1}}}
\def\n#1{|\kern-0.24em|\kern-0.24em|#1|\kern-0.24em|\kern-0.24em|}

\newcommand{\A}{{\bf \mathcal A}}
\newcommand{\B}{{\bf \mathcal B}}
\def\C{\mathbb C}
\def\D{\mathcal D}
\newcommand{\dom}{{\mathcal D}om}
\newcommand{\pathR}{{\mathcal{\rm I\!R}}}
\newcommand{\Nabla}{{\bf \nabla}}
\newcommand{\E}{{\mathbf E}}
\newcommand{\Epsilon}{{\mathcal E}}
\newcommand{\F}{{\mathcal F}}
\newcommand{\G}{{\mathcal G}}
\def\g{{\mathfrak g}}
\newcommand{\HH}{{\mathcal H}}
\def\h{{\mathfrak h}}
\def\k{{\mathfrak k}}
\newcommand{\I}{{\mathcal I}}
\def\LL{{\mathbb L}}
\def\law{\mathop{\mathrm{ Law}}}
\def\m{{\mathfrak m}}
\newcommand{\K}{{\mathcal K}}
\newcommand{\p}{{\mathfrak p}}
\newcommand{\R}{{\mathbb R}}
\newcommand{\Rc}{{\mathcal R}}
\def\T{{\mathcal T}}
\def\M{{\mathcal M}}
\def\N{{\mathcal N}}
\newcommand{\pnabla}{{\nabla\!\!\!\!\!\!\nabla}}
\def\X{{\mathbb X}}
\def\Y{{\mathbb Y}}
\def\L{{\mathcal L}}
\def\1{{\mathbf 1}}
\def\half{{ \frac{1}{2} }}
\def\vol{{\mathop {\rm vol}}}
\def\euc{{\mathop {\rm eul}}}

\newcommand{\term}{{T}}
\newcommand{\tm}{{t}}
\newcommand{\stm}{{s}}
\newcommand{\pole}{{y_0}}

\def\ad{{\mathop {\rm ad}}}
\def\Conj{{\mathop {\rm Ad}}}
\def\Ad{{\mathop {\rm Ad}}}
\newcommand{\const}{\rm {const.}}
\newcommand{\eg}{\textit{e.g. }}
\newcommand{\as}{\textit{a.s. }}
\newcommand{\ie}{\textit{i.e. }}
\def\s.t.{\mathop {\rm s.t.}}
\def\esssup{\mathop{\rm ess\; sup}}
\def\Ric{\mathop{\rm Ric}}
\def\div{\mathop{\rm div}}
\def\ker{\mathop{\rm ker}}
\def\Hess{\mathop{\rm Hess}}
\def\Image{\mathop{\rm Image}}
\def\Dom{\mathop{\rm Dom}}
\def\id{\mathop {\rm id}}
\def\Image{\mathop{\rm Image}}
\def\Cyl{\mathop {\rm Cyl}}
\def\Conj{\mathop {\rm Conj}}
\def\Span{\mathop {\rm Span}}
\def\trace{\mathop{\rm trace}}
\def\ev{\mathop {\rm ev}}
\def\supp{{\mathrm supp}}
\def\Ent{\mathop {\rm Ent}}
\def\tr{\mathop {\rm tr}}
\def\graph{\mathop {\rm graph}}
\def\loc{\mathop{\rm loc}}
\def\so{{\mathfrak {so}}}
\def\su{{\mathfrak {su}}}
\def\u{{\mathfrak {u}}}
\def\o{{\mathfrak {o}}}
\def\pp{{\mathfrak p}}
\def\gl{{\mathfrak gl}}
\def\hol{{\mathfrak hol}}
\def\z{{\mathfrak z}}
\def\t{{\mathfrak t}}
\def\<{\langle}
\def\>{\rangle}
\def\span{{\mathop{\rm span}}}
\def\diam{\mathrm {diam}}
\def\inj{\mathrm {inj}}
\def\Lip{\mathrm {Lip}}
\def\Iso{\mathrm {Iso}}
\def\Osc{\mathop{\rm Osc}}
\renewcommand{\thefootnote}{}
\def\supp{\mathrm {Supp}}
\def\V{\mathbb V}
\def\vol{{\mathop {\rm vol}}}
\def\cut{{\mathop {\rm Cut}}}
\def\Lip{{\mathop {\rm Lip}}}

\def\paral{/\kern-0.55ex/}
\def\parals_#1{/\kern-0.55ex/_{\!#1}}
\def\bparals_#1{\breve{/\kern-0.55ex/_{\!#1}}}
\def\n#1{|\kern-0.24em|\kern-0.24em|#1|\kern-0.24em|\kern-0.24em|}
\def\f{\frac}
\title{First Order Feynman-Kac Formula}
\author{Xue-Mei Li  and
 James Thompson}
\institute{Department of Mathematics, Imperial College London,  U.K.\\
Math\'ematiques, Universit\'e du Luxembourg }
\date{}
%\footnote{printed \today}
\maketitle

\begin{abstract}
We study the parabolic integral kernel  for the weighted Laplacian with a potential. For manifolds with a pole we deduce formulas and estimates for the derivatives of the Feynman-Kac kernels and their logarithms,  these are in terms
of a `Gaussian' term and the semi-classical bridge.  \end{abstract}
\\[1em]

\small{AMS Mathematics Subject Classification : 60J60, 60H30, 58J65, 58J70}\\
{key words: manifold with a pole, semi-classical bridge,  Feynman-Kac formula, Gaussian bounds for fundamental solutions of parabolic equations, logarithmic heat kernels} \normalsize

%\tableofcontents

\section{Introduction}
Throughout the article $\lbrace \Omega,\mathcal{F},\mathcal{F}_t,\mathbb{P}\rbrace$ is a  filtered probability space with right continuous and complete filtrations,   $M$ is a complete connected smooth Riemannian manifold of dimension $n$, $h$ is a smooth real valued function on $M$, and  $V$ is a real valued bounded H\"older continuous function on $M$.
We use $\Delta$ to denote the Laplace-Beltrami operator and use  $\Delta^h$ to denote  the weighted Laplace-Beltrami operator $\Delta^h=\Delta+2L_{\nabla h}$.

We study the parabolic equation 
$$\begin{aligned}
&\frac{\partial }{\partial t}f= (\f 12 \Delta^h -V) f, \quad    t>0,\\
& \lim_{t \downarrow 0} f(t,x) = f(x).\end{aligned}$$  
It is well know that  there exists a fundamental solution, which we refer as the Feynman-Kac kernel  and also as the heat kernel in case both $h$ and $V$ vanish identically and as the weighted heat kernel in case $V$  vanishes.

Our main objectives are to obtain precise Gaussian bounds, explicit formulas for the gradient of  the Feynman-Kac kernel and for the gradient of its logarithm, and small time asymptotics. We begin with developing suitable first order Feynman-Kac formulas, and make no assumptions on the uniform ellipticity of the operator. All assumptions will be on Riemannian data, but without differentiating the curvature.  

 Estimates and gradient estimates for the heat kernels is a classical topic, see for example  \cite{ Li-Yau, Davies-Gaussian-bound}. For the Schr\"odinger operators acting on real valued functions, it is very popular to use (zero order)  Feynman-Kac formulas, see \cite{Molchanov,  Azencott, Simon-82, Freidlin-book,  Sturm-93,  Demuth-vanCasteren,  Lorinczi-Hiroshima-Betz, Gueneysu}. See  \cite{Albverio-Kawabi-Rockner, Gross,Fitzsimmons} for its use in infinite dimensional analysis and for identifying  Dirichlet spaces, and see \cite{Elworthy-Truman-81} for its study in connection
with  Hamilton-Jacob equations. Differential formulas for   Feynman-Kac semigroups were obtained in \cite{Elworthy-Li, Elworthy-Li-Vilnius, Daprato-Zabczyk, DaPrato-2015, Li-Zhao}, for partial differential equations  and for stochastic partial differential equations. Heat kernel formula for Schr\"odinger type operator acting on sections of vector bundles can be found in
 \cite{Ndumu-09} and \cite{Braverman}.
 The study of the probability measure induced by the semi-classical bridge can be found  in \cite{Li-ibp-sc, Li-Hessian, Li-Hessian2}, c.f. \cite{Li-generalised} . See also \cite{Chen-Li-Wu-cut-off} for small time gradient and Hessian estimates on any complete Riemannian manifold with which we also obtain a Log-Sobolev inequality with a potential term and  a local Log-Sobolev inequality on the loop space.

The paper is organised as following, in Section 2 we obtain two basic formulas for the gradient of the Feynman-Kac semigroups on a general manifold, which leads to  gradient estimates for the semi-groups and for the fundamental solutions. The latter are given in terms of the ratio of fundamental solutions, and so for  explicit estimates we apply a Harnack inequality on the weighted heat kernel.
 In section 3 we work on manifolds with a pole and use the formulas mentioned earlier to obtain formulas for the fundamental solutions. The latter sets of formulas are explicit, versus implicit, from which we obtain estimates for their gradients in terms of the Gaussian kernels and on the Jacobian determinant of the exponential map.

\subsection{Notation} 
Without loss of generality we assume that $V\ge 0$ and  use  $P^{h,V}_tf$ to denote its solution, which is also written as $P_t^Vf$ if $h=0$,  as $P_t^hf$ if $V=0$, and as $P_tf$ if both $h$ and $V$ vanish.
Their corresponding integral kernels are denoted by the lower case functions: $p_t^{h,V}$, $p_t^V$, $p_t^h$, and $p_t$. Also, if $Z$ is an additional  $C^1$ vector field the notations $p_t^{h,Z,V}$ etc. will be used.

 We use $L_\infty$, $C^k$, $BC^k$, $C_K^\infty$, and $C^{1, \alpha}$ to denote respectively the set of essentially bounded measurable (real valued) functions, the set of  functions (or differential forms)  whose derivatives up to order $k$ exist and are continuous, the set of bounded $C^k$ functions with bounded derivatives, the set of $C^k$  functions with compact supports, and 
the set of functions whose first order derivative is H\"older continuous of order $\alpha$.

Let $x_0$ denote a point in $M$ and let $OM$ denote the space of orthonormal frames on $M$.
Let $u_t$ be the canonical horizontal Brownian motion on $OM$ whose projection to $M$ is a Brownian motion with drift $\nabla h$ (an $h$-Brownian motion) and will be denoted by $x_t$. We note that $x_t$ does not explode if and only if $u_t$ does not and $u_tu_0^{-1}$ is the stochastic parallel translation along  $x_t$, allowing covariant differentiating  vector fields along the non-differentiable paths of the Brownian motion.  Let $\Ric$ denote the Ricci curvature  and  let 
${\Ric}^{\sharp}_x:T_xM \rightarrow T_xM$ denote  the linear map defined by $\< {\Ric}^\sharp_x u,v\> = {\Ric_x}(u,v)$. Let  $W_t: T_{x_0}M\to T_{x_t}M$ denote the damped parallel translation solving the equation 
$$\f {d }{dt} \left(u_t^{-1}W_t\right)=-\f 12 u_t^{-1}\left( {\Ric}_{x_t}^{\sharp} -2\Hess(h)\right)({ W_t}), \quad W_0=Id.$$
Here $\Hess(h)=\nabla d h$ denote the Hessian of $h$.

\subsection{Main results for a general  manifold.}
Let $T$ be a positive number and let $0\le t\le T$.  We  set $$\rho^h(x)=\inf_{|v|=1, v\in T_xM} \{\Ric(v,v)-2 \Hess h(v,v)\}, \qquad \V_t = e^{-\int_0^t V(x_r) dr}.$$
If $\rho^h$ is bounded from below then the $h$-Brownian motion  does not explode. The following is  a key preliminary result. 

\begin{theorem*}
[ \ref{Feynman-Kac-kernel-1} \& \ref{thm:maintheorem}]
 Suppose that $\rho^h$ is bounded from below and $f\in L_\infty$.  Let $0< t \leq T$ and $v\in T_{x_0}M$. Then the following two formulas hold:
 \small
$$\begin{aligned}
(dP^{h,V}_Tf)(v) =\text{ }& \frac{1}{t}\E \left[ \V_T f(x_T) \left( \int_0^t \< W_s(v),u_s \,dB_s\>-\int_0^t \int_0^r  dV(W_s(v)) \,ds\,dr\right)\right],\\
(dP^{h,V}_Tf)(v) = &\frac{1}{t} \E \left[ f(x_t) \int_0^t \< W_s(v),u_s dB_s\>\right]\\
&+ \E \left[   f( x_t)\int_0^t \left( e^{-\int_{t-s}^t V(x_{r})\,dr}
\f {V (x_{t-s})} {t-s}\int_0^{t-s} \< W_r(v),u_r\, dB_r\>\right)\,ds\right].\end{aligned}$$
\end{theorem*}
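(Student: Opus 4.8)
The plan is to prove both formulas by the Bismut--Elworthy--Li martingale method, built on two martingales attached to the space--time process. Write $u(s,x)=P^{h,V}_{T-s}f(x)$; for $s<T$ this is smooth in $x$ by parabolic regularity (this is where the H\"older continuity of $V$ enters) and solves $\partial_s u+(\f12\Delta^h-V)u=0$. Applying It\^o's formula to $M^0_s:=\V_s\,u(s,x_s)$ and using this equation shows $M^0$ is a local martingale with $dM^0_s=\V_s\langle\nabla u(s,x_s),u_s\,dB_s\rangle$; it equals $P^{h,V}_Tf(x_0)$ at $s=0$ and $\V_T f(x_T)$ at $s=T$. Differentiating the equation in space and applying It\^o's formula to $\V_s\langle\nabla u(s,x_s),W_s v\rangle$, the Weitzenb\"ock formula produces a curvature term cancelled exactly by the defining ODE of the damped parallel translation $W_s$, the potential term $V\langle\nabla u,W_s v\rangle$ is absorbed by the weight $\V_s$, and the only surviving drift is $\V_s\,u(s,x_s)\,dV(W_s v)$. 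Subtracting its time integral gives the first-order (intertwining) martingale
$$N_s:=\V_s\,\langle\nabla u(s,x_s),W_s v\rangle-\int_0^s\V_r\,u(r,x_r)\,dV(W_r v)\,dr,$$
whose expectation is constant in $s$ and, evaluated at $s=0$, equals $(dP^{h,V}_Tf)(v)$.

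For the first formula, put $A=\int_0^t\langle W_s v,u_s\,dB_s\rangle$ and $C=\int_0^t\!\int_0^r dV(W_s v)\,ds\,dr$, so that the claim reads $(dP^{h,V}_Tf)(v)=\f1t\,\E[M^0_T(A-C)]$. I would treat the two pieces separately. As $A$ and $C$ are $\mathcal F_t$-measurable and $M^0$ is a martingale, conditioning on $\mathcal F_t$ replaces $M^0_T$ by $M^0_t$; the It\^o product rule then converts $\E[M^0_T A]$ and $\E[M^0_T C]$ into the space--time integrals $\E\int_0^t\V_s\langle\nabla u(s,x_s),W_s v\rangle\,ds$ (the bracket of $M^0$ with $A$) and $\E\int_0^t\!\int_0^r\V_s\,u(s,x_s)\,dV(W_s v)\,ds\,dr$ (using that $C$ has finite variation). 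After relabelling, these two combine into $\E\int_0^t N_s\,ds$; since $\E[N_s]=(dP^{h,V}_Tf)(v)$ is constant, this equals $t\,(dP^{h,V}_Tf)(v)$, which is the first formula.

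For the second formula I would instead start from the variation-of-parameters identity $P^{h,V}_tf=P^h_tf-\int_0^t P^h_{t-s}\big(V\,P^{h,V}_sf\big)\,ds$, obtained by differentiating $s\mapsto P^h_{t-s}P^{h,V}_sf$. Differentiating in the direction $v$ and applying the $V\equiv0$ case of the first formula --- the classical Bismut identity $(dP^h_\tau g)(v)=\f1\tau\,\E[g(x_\tau)\int_0^\tau\langle W_r v,u_r\,dB_r\rangle]$ --- to the leading term and to each slice $P^h_{t-s}(V\,P^{h,V}_sf)$ yields the stochastic integrals $\f1t\int_0^t\langle W_s v,u_s\,dB_s\rangle$ and $\f1{t-s}\int_0^{t-s}\langle W_r v,u_r\,dB_r\rangle$. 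Finally the Markov property at time $t-s$, namely $\E[f(x_t)\,e^{-\int_{t-s}^tV(x_r)\,dr}\mid\mathcal F_{t-s}]=(P^{h,V}_sf)(x_{t-s})$, repackages the factor $V\,P^{h,V}_sf$ along the path into the explicit weights $e^{-\int_{t-s}^tV(x_r)\,dr}\,V(x_{t-s})$ of the statement, and Fubini assembles the slices into the single expectation over $[0,t]$.

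The It\^o and Weitzenb\"ock computations above are routine; the real work is analytic. Since no uniform ellipticity is assumed and $M$ is noncompact, the main obstacle is to upgrade $M^0$ and $N$ from local to genuine martingales and to justify the conditioning, product-rule and Fubini steps. The decisive estimate is that $\rho^h$ bounded below controls the damped translation, $|W_s|\le e^{-\f12(\inf\rho^h)\,s}\,|v|$, so that together with $f,V\in L_\infty$ and the standard $L^2$ bounds on the martingale parts every integrand is integrable. One must also handle $\nabla P^{h,V}_{T-s}f$, which is only of size $(T-s)^{-1/2}$ as $s\uparrow T$ but stays integrable in $ds$; this is what makes the $s$-averaging in the first formula and the Duhamel integral in the second converge up to the endpoint.
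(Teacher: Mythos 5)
Your argument follows essentially the same route as the paper's: the first identity comes from pairing the zeroth-order Feynman--Kac martingale $\V_s P^{h,V}_{T-s}f(x_s)$ with the $L^2$ martingale $\int_0^t \< W_s(v),u_s\,dB_s\>$ via the It\^o isometry and then invoking your intertwining martingale $N_s$, which is precisely the content of Lemma \ref{lemma3-V} and Proposition \ref{lem-four}, while the second identity is obtained, exactly as in the paper, from the variation-of-constants formula together with the $V=0$ Bismut formula applied slice-wise and the Markov property at time $t-s$. One remark worth recording: your Duhamel identity carries the correct sign, $P^{h,V}_tf=P^h_tf-\int_0^t P^h_{t-s}\bigl(V\,P^{h,V}_sf\bigr)\,ds$, so your derivation actually yields the second formula with a \emph{minus} in front of the potential term; the ``$+$'' in the stated theorem (and in the paper's variation-of-constants formula) is a sign slip, as one checks by taking $V$ to be a positive constant $c$, for which the ``$+$'' version returns $(2-e^{-ct})\,dP^h_tf(v)$ instead of the correct $e^{-ct}\,dP^h_tf(v)$.
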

 The first identity is for $V \in BC^1$, while the second identity is for H\"{o}lder continuous and bounded potentials, the first formula was given in \cite{Elworthy-Li} where  the method of differentiating a stochastic flow is used and hence involves a different set of conditions.

From these formulas, corresponding formulas for the  fundamental solutions follow, which would be in terms of the classical Brownian bridges (the conditioned Brownian motions terminating at $y_0$ at time $t$). Integration by parts formulas related to the above mentioned formulas, for the case of $V=0, h=0$ and for $M$ compact, was proved in \cite{Bismut}, however the conditioned Brownian motions are defined in terms of the logarithmic derivatives of the fundamental solution $p_t^h$,  and in this sense these probabilistic representations for the fundamental solutions are implicit. For manifolds with a pole, we will shortly introduce intrinsic versions of the formulas, in terms of the semi-classical bridges.

For a general manifold, gradient estimates can also be deduced directly from the identities mentioned earlier, for example from the first identity we obtain the following. 
Suppose that $\Ric - 2\Hess (h) \geq K$ and $V \in  BC^1$. Let $f$ be a non-negative  bounded measurable function with $P_t^{h,V}f(x_0)=1$. Then, by  Proposition~\ref{prop:derivativeestiamte},\begin{equation*}
| \nabla P^{h,V}_t f|_{x_0} \leq \frac{1}{\sqrt{t}}\Big(2C_1(t) \E\left[(f(x_t)\V_t\log (f(x_t)\V_t))^+\right] \Big)^{\frac{1}{2}}+tC_2(t)|\nabla V|_{\infty},
\end{equation*}
where the constants $C_1(t)$ and $tC_2(t)$ depend on $K$ and are of order $O(1)$ for $t$ small. If  furthermore $f$ is not identically zero  then,  
\begin{equation*}
| \nabla \log P^{h,V}_t f |_{x_0} \leq \frac{1}{\sqrt{t}}\Big(2C_1(t) {\mathcal H}_t(f,x_0) \Big)^{\frac{1}{2}}+tC_2(t)|\nabla V|_{\infty}, 
\end{equation*}
where ${\mathcal H}_t(f,x_0) $ is  the entropy function
$ \mathbb{E}\left[
\frac{f(x_t)\mathbb{V}_t}{P^{h,V}_tf(x_0)} \left( \log\left( \frac{f(x_t)\mathbb{V}_t}{P^{h,V}_tf(x_0)} \right)\right)^+\right]$. See Corollary \ref{cor:bnds}.
From this one obtains the following type of upper bound for  $| \nabla \log p^{h,V}_t |_{x_0} $:
$$
\frac{C_1(t)}{\sqrt{t}}\left(\sup_{y\in M} \left(\log  \frac{p^h_t(y,y_0)}{p^h_{2t}(x_0,y_0)}+ 2t(\sup V-\inf V)    \right)^+\right)^{\frac{1}{2}}+tC_2(t)|\nabla V|_{\infty}.$$
Further estimates can then be obtained from bounds on the kernels $p_t^h$. For manifolds with a pole $y_0$ we would not need bounds on $p_t^h$ and furthermore we have a formula for $\nabla \log p^{h,V}_t (\cdot, y_0)$.

\subsection{Main results  for manifolds with a pole}
Suppose that $M$ is a manifold with a pole $\pole$,  by which we mean that the exponential map $\exp_\pole$ is a diffeomorphism,  it makes sense to compare the Feynman-Kac kernel and its derivatives with the `Gaussian kernel'. Indeed, we will also present explicit formulas for  $p_t^{h,V}(\cdot, \pole)$, for
$\nabla p_t^{h,V}(\cdot, \pole)$ and for $\nabla \log p_t^{h,V}(\cdot, \pole)$ in terms of the semi-classical bridges, with terminal value  $y_0$ at a terminal time $T>0$. The semi-classical bridges are constructed from the `Gaussian kernel' and so the formulas are explicit. 
 
Let $J_{y_0}$ denote the Jacobian determinant of the exponential map at $y_0$
and let 
\begin{equation}\label{phi}\Phi(y)=\frac{1}{2}J_{y_0}^{\frac{1}{2}}(y)\Delta J_{y_0}^{-\frac{1}{2}}(y).
\end{equation} The subscript $y_0$ will be omitted from time to time.
A {\it semi-classical bridge} $\tilde{x}_s$ with terminal value $\pole$ and terminal time $T$
is a time dependent diffusion with generator $\frac{1}{2}\triangle + \nabla \log k_{T-s}(\cdot, y_0)$  where, for $d$  the Riemannian distance function,
\begin{equation}\label{kt}k_t(x_0, y_0):=(2\pi t)^{-\f n 2}e^{-\f {d^2(x_0, y_0)}{2t}}J^{-\f 12}(x_0).
\end{equation}
We refer  $(2\pi t)^{-\f n 2}e^{-\f {d^2(x_0, y_0)}{2t}}$ as the `Gaussian kernel'.
A useful feature of the semi-classical bridge is that its radial process $r_t=d(\tilde x_t, y_0)$ agrees with that of an $n$-dimensional Euclidean Brownian bridge, so $r_t$ is in fact an $n$-dimensional Bessel bridge. On $\R^n$, the semi-classical  bridge and the classical Brownian bridge (the conditioned Brownian motion) agree.  Elworthy and  Truman, whose considerations come from classical mechanics and semi-classical limits, proved the following formula (see  \cite{Elworthy-Truman-81}):
\begin{equation}\label{elaform}
p_T^V(x_0,y_0) = k_T(x_0,y_0) \E \left[ e^{\int_0^T \left(\Phi-V\right)(\tilde{x}_s)\,ds} \right].
\end{equation}
Since $\Phi$  is bounded on manifolds of constant negative curvature, (\ref{elaform}) leads to a Gaussian upper bound for the integral kernel $p_t^h$.
Further Gaussian estimates on the heat kernels are obtained by Ndumu \cite{Ndumu91} and Aida \cite{Aida-Gaussian-estimates}. An extension of the formula was given by Watling \cite{Watling1} where the method is similar to the original approach 
\cite{Elworthy-Truman-81, Elworthy-book, Elworthy-Truman-Watling}.  

Or main contribution is to use semi-classical bridges  for obtaining first order estimates. We first prove an extension of (\ref{elaform}), using a method different from that  in \cite{Elworthy-Truman-81},  and
then  deduce a first order formula.  
We also allow an additional non-gradient drift $Z$,  for which we follow a beautiful idea in \cite{Watling1} and replace the semi-classical bridge  by the `semi-classical Riemannian bridge', i.e. a Markov process with the Markov generator: 
$$\frac{1}{2}\triangle + \nabla \log k_{T-s}(\cdot, y_0)+\nabla S, \quad \hbox{where
$S(x)=\int_0^1 \<\dot \gamma(r), Z(\gamma(r))\>dr$},$$ 
and  $\gamma: [0,1]\to M$ is the unique geodesic from $x$ to $y_0$. The value $S(x)$ represents the path average of the radial part of $Z$ along $\gamma$. 
Let us define $$\Phi^h= \Phi -\f 1 2\Delta h - \f 1 2 |\nabla h|^2, \quad \Psi=
 \f 12 |\nabla S|^2+\f 12 \Delta S+\<Z, \nabla S-\nabla h\>.$$
If the Brownian motion $x_t$ with drift  $Z+\nabla h$ is complete then  the probability distribution of  $x_t$ and that of  the semi-classical Riemannian bridge are equivalent on $\F_t$ where $t<T$. Denote by $M_t$ the Radon Nikodym derivative on $\F_t$. Then 
$$M_t:=\f {k_\term (x_0)e^{(S-h)(x_0)} } {k_{\term-\tm} (\tilde{x}_\tm) e^{(S-h)(\tilde x_t)}}\exp\left[  \int_0^\tm (\Phi^{h}+\Psi)(\tilde{x}_\stm) \,d\stm\right],$$
and $(M_t, t<T)$ is a martingale. See  Lemma \ref{lemma-Girsanov}.

Let $\tilde u_t$ denote the solution to the stochastic differential equation (\ref{horizontal-2}) on the orthonormal frame bundle with the initial value $\tilde u_0$ in 
 the fibre $ \pi^{-1}(x_0)$. Set $\tilde x_t = \pi(\tilde u_t)$, then $(\tilde x_t)$  is a semi-classical bridge. 
 Set $$d\tilde{B}_r = dB_r + \tilde{u}_r^{-1}\nabla \log(e^{-h} k_{T-r})(\tilde{x}_r)\,dr.$$
 For the probabilistic representations of the derivative of the semi-group $P_t^{h, Z, V}$ we would have  expected to use the damped parallel translation equation involving the vector field $\nabla_\cdot d \log k_{T-t}$, but  we only need to involve  the following  equation,
 \begin{equation*}
{D \over dt }\tilde{W}_t=-{1\over 2}  {\Ric}_{\tilde x_t}^{\#}(\tilde{W}_t)+\Hess(h)({\tilde W_t}), \quad \tilde W_0 = {\id}_{T_{x_0}M},
\end{equation*}
whose solution is bounded under the condition that $\rho^h$ is bounded from below.
Let $\beta_t^h$ denote the exponential of the path integral 
$$\beta^h_t=\exp\left(  \int_0^t (\Phi^h+\Psi-V)(\tilde{x}_s)ds\right).$$ 

\begin{theorem*}
[ \ref{eltruform}]  Assume that $y_0$ is a pole for $M$. Suppose that the BM with drift $\nabla h+Z$ does not explode and that  $V$ belongs to $ C^{0,\alpha}\cap L_\infty$. Suppose that $\beta_t^h$ converges to $\beta_T^h$ in $L^1$, which holds if $\Phi^h+\Psi-V$ is bounded above.  Then
\begin{equation}\label{elaform-1-1} 
\f{p^{h,Z,V}_T(x_0,y_0) }{k_T(x_0, y_0)}=\f{e^{(h-S)(y_0)}}{e^{(h-S)(x_0)}}\E \left[ \exp\left(\int_0^T (\Phi^h+\Psi-V)(\tilde{x}_s)ds\right) \right].
\end{equation}
\end{theorem*}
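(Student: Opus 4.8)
The plan is to combine the Chapman--Kolmogorov identity for the Feynman--Kac kernel with the change of measure furnished by Lemma~\ref{lemma-Girsanov}, and then to let the splitting time tend to the terminal time $T$. First I would fix $t\in(0,T)$ and split the kernel at time $t$. Writing $x_s$ for the diffusion with generator $\f12\Delta^h+L_Z$ started at $x_0$ and $\V_t=e^{-\int_0^t V(x_s)\,ds}$, the Feynman--Kac representation $\E[\V_t\,F(x_t)]=\int_M p^{h,Z,V}_t(x_0,z)F(z)\,e^{2h(z)}\,\vol(dz)$ applied to $F=p^{h,Z,V}_{T-t}(\cdot,y_0)$, together with the semigroup property $p^{h,Z,V}_T=p^{h,Z,V}_t\ast p^{h,Z,V}_{T-t}$, yields
\[
p^{h,Z,V}_T(x_0,y_0)=\E\left[\V_t\, p^{h,Z,V}_{T-t}(x_t,y_0)\right].
\]
This identity is exact because the diffusion does not explode and $V$ is bounded.

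Next I would transfer the expectation onto the semi-classical Riemannian bridge $\tilde x$. The integrand is $\F_t$-measurable, and by Lemma~\ref{lemma-Girsanov} the law of $x$ and that of $\tilde x$ are equivalent on $\F_t$ with $M_t=\f{d\,\law(x)}{d\,\law(\tilde x)}$; hence $\E[\V_t F(x_t)]=\E[M_t\,\V_t\,F(\tilde x_t)]$, the right-hand expectation now being over $\tilde x$. Substituting the explicit $M_t$ and observing that, along the bridge, $M_t\,\V_t=\f{k_T(x_0)\,e^{(S-h)(x_0)}}{k_{T-t}(\tilde x_t)\,e^{(S-h)(\tilde x_t)}}\,\beta^h_t$, I obtain after dividing by $k_T(x_0,y_0)$ and using $e^{(S-h)(x_0)}=1/e^{(h-S)(x_0)}$,
\[
\f{p^{h,Z,V}_T(x_0,y_0)}{k_T(x_0,y_0)}=\f{1}{e^{(h-S)(x_0)}}\,\E\left[\beta^h_t\,\f{p^{h,Z,V}_{T-t}(\tilde x_t,y_0)}{k_{T-t}(\tilde x_t)\,e^{(S-h)(\tilde x_t)}}\right].
\]

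Finally I would let $t\uparrow T$. Three facts drive the limit. Since $\tilde x$ is a bridge terminating at $y_0$ (its radial part being a Bessel bridge that collapses at $T$), one has $\tilde x_t\to y_0$ almost surely, so $e^{(S-h)(\tilde x_t)}\to e^{(S-h)(y_0)}$. The van Vleck--Morette short-time expansion, whose leading coefficient is exactly the factor $J^{-\f12}$ built into $k_\tau$, gives $p^{h,Z,V}_\tau(x,y_0)/k_\tau(x,y_0)\to1$ as $\tau\downarrow0$ with $x\to y_0$, the drift $h,Z$ and the potential $V$ entering only the subleading coefficients; thus the bracketed ratio tends to $e^{(h-S)(y_0)}$. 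Combined with the hypothesised $L^1$-convergence $\beta^h_t\to\beta^h_T$, the expectation converges to $e^{(h-S)(y_0)}\,\E[\beta^h_T]$, which is exactly (\ref{elaform-1-1}).

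The step I expect to be the main obstacle is this last passage to the limit: one must upgrade the pointwise convergence of the integrand to a mode strong enough to interchange limit and expectation. Concretely, I would establish a uniform two-sided Gaussian bound for the ratio $p^{h,Z,V}_{T-t}(\tilde x_t,y_0)/k_{T-t}(\tilde x_t)$ valid for all $t$ near $T$ and all bridge positions near the pole, which together with the $L^1$-convergence of $\beta^h_t$---guaranteed by the assumption that $\Phi^h+\Psi-V$ be bounded above---supplies the required uniform integrability. The non-explosion hypothesis is what makes both the Feynman--Kac identity exact and Lemma~\ref{lemma-Girsanov} applicable.
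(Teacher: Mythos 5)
Your overall strategy (transfer to the semi-classical Riemannian bridge via Lemma \ref{lemma-Girsanov}, then let the splitting time tend to $T$) is in the right spirit, but the specific decomposition you choose creates a gap that the paper's proof is designed to avoid. By splitting the kernel at time $t$ you are forced to control the quantity
\begin{equation*}
\f{p^{h,Z,V}_{T-t}(\tilde x_t,y_0)}{k_{T-t}(\tilde x_t)}
\end{equation*}
inside the bridge expectation, and your argument rests on the claim that this ratio tends to $1$ (uniformly enough, and with uniform integrability) as $t\uparrow T$. That is precisely the kind of statement the theorem itself is about: the conclusion \emph{is} a formula for $p^{h,Z,V}_T(x_0,y_0)/k_T(x_0,y_0)$. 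Invoking a ``van Vleck--Morette expansion'' for $p^{h,Z,V}_\tau$ near the diagonal is therefore close to circular, and in any case such an expansion is not available under the stated hypotheses: the paper assumes only that $y_0$ is a pole, that the drifted Brownian motion is complete, that $V$ is bounded and H\"older, and that $\beta^h_t\to\beta^h_T$ in $L^1$ --- no curvature bounds, no bounded geometry, and the operator carries a non-gradient $C^1$ drift $Z$ and a merely H\"older potential. The Minakshisundaram--Pleijel asymptotics cited in the paper concern the pure heat kernel on compact subsets off the cut locus; upgrading them to a \emph{uniform two-sided} bound for $p^{h,Z,V}_\tau(x,y_0)/k_\tau(x,y_0)$ valid simultaneously as $\tau\downarrow 0$ and $x\to y_0$ along the bridge is a substantial unproved step, not a technicality. (A smaller inconsistency: you pair the kernel against $e^{2h}\,d\vol$, whereas the paper defines $p^{h,Z,V}_t$ as the density with respect to the volume measure; with your convention the limit of the ratio would not be $1$ but would pick up a factor of $e^{-2h(y_0)}$.)

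The paper sidesteps all of this by never splitting the unknown kernel. It tests $P^{h,Z,V}_t$ against the \emph{explicit} approximate identity $\phi\, k_{T-r}$ (with $\phi$ smooth, compactly supported, $\phi(y_0)=1$), so that after the Girsanov transfer the only kernel appearing inside the bridge expectation is the ratio $k_{T-r}(\tilde x_t)/k_{T-t}(\tilde x_t)$ of explicit Gaussian-type kernels, which is trivially handled by first letting $r\uparrow t$ and then $t\uparrow T$; the identification of the outer limit with $p^{h,Z,V}_T(x_0,y_0)$ is done on the other side of the identity by an elementary change of variables in $T_{y_0}M$ using $J(y_0)=1$. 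To repair your proof you would either need to supply the uniform short-time Gaussian asymptotics for $p^{h,Z,V}$ (which would require hypotheses well beyond those of the theorem), or switch to the test-function formulation.
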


We take $Z=0$ in the statement below, in this case $\Psi\equiv 0$. For the proof we relied on results in section 2, where the commutation relations concerning $d$ and the semi-groups generated by $\Delta^h$, on both functions and on differential forms, are used.  A non-symmetric first order term $Z$ can be added in,  once these commutative relations are proved rigorously.
\begin{theorem*}
[\ref{thm:fof}] Assume that $y_0$ is a pole for $M$. Suppose  that $\rho^h$ is bounded from below and that $\Phi^h-V$ is bounded above. Let $Z^h_t$  denote the normalised random variable $\beta_t^h/\E[\beta_t^h]$.
If $V$ is bounded and H\"{o}lder continuous then
\begin{equs}
d \left( {\log p^{h,V}_T}(\cdot,y_0) \right)_{x_0}
=&\frac{1}{T} \E \left[ Z_T^h\int_0^T \< \tilde{W}_s(\cdot),\tilde{u}_s d\tilde B_s\rangle\right]
 \\
&+ { \E \left[ Z_T^h \int_0^T V (\tilde x_{T-s})  \;e^{-\int_{T-s}^T V(\tilde x_u)du }
\f  {1}{T-s}\int_0^{T-s} \< \tilde W_r(\cdot),\tilde u_r d\tilde B_r\> \right]}.
\end{equs}
If $V\in BC^1$, then
$$\begin{aligned}
\f{d(p^{h,V}_T(\cdot,y_0))(x_0)}{k_T(x_0, y_0)} &=e^{h(y_0)-h(x_0)} \E \left[ e^{\int_0^T (
\Phi^h-V)(\tilde x_s) ds}  \f 1 T \int_0^T \< \tilde{W}_r(\cdot),\tilde{u}_r d\tilde{B}_r-(T-r)\nabla Vdr\>\right].\end{aligned}$$
\end{theorem*}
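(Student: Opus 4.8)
The plan is to derive both identities from the general first-order semigroup formulas of Theorems~\ref{Feynman-Kac-kernel-1} and~\ref{thm:maintheorem} by specialising the terminal data to the pole $y_0$ and then trading the implicit conditioned $h$-Brownian bridge for the explicit semi-classical bridge via the Girsanov density of Lemma~\ref{lemma-Girsanov}. Since $p_T^{h,V}(\cdot,y_0)$ arises as the limit of $P_T^{h,V}f$ as $f$ approximates $\delta_{y_0}$, I would apply the second (H\"older) formula with $t=T$ to obtain the $\log$-derivative statement, and the first ($BC^1$) formula to obtain the kernel-derivative statement. Conditioning on $x_T=y_0$ replaces $\E[\,\cdot\,]$ by a normalised $h$-bridge expectation, and the ratio produced by this normalisation is exactly $d\log p_T^{h,V}(\cdot,y_0)$; the commutation relations of Section~2 between $d$ and $P_t^{h,V}$, realised through damped parallel translation, are what let $d$ be carried inside the expectation and generate the stochastic integral $\int_0^T \langle W_s(\cdot),u_s\,dB_s\rangle$.

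The next step is to pass from the $h$-Brownian bridge to the semi-classical bridge. With $Z=0$ we have $S\equiv 0$ and $\Psi\equiv 0$, so Lemma~\ref{lemma-Girsanov} rewrites every $h$-bridge expectation as a semi-classical-bridge expectation weighted by the density $M_T$; combined with $\V_T=e^{-\int_0^T V}$ this produces precisely $\beta_T^h=\exp(\int_0^T(\Phi^h-V)(\tilde x_s)\,ds)$, and normalising by $\E[\beta_T^h]$ yields the weight $Z_T^h$. Simultaneously the driving martingale transforms by $d\tilde B_r=dB_r+\tilde u_r^{-1}\nabla\log(e^{-h}k_{T-r})(\tilde x_r)\,dr$, the frame $u$ becomes $\tilde u$, and $W$ becomes $\tilde W$. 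Here I must verify the simplification flagged in the text: although conditioning adds the drift $\nabla\log k_{T-t}$ to the bridge, the damped translation needed is the simpler $\tilde W$ solving the equation with only $\Ric^{\#}$ and $\Hess(h)$, not the one involving $\nabla_\cdot d\log k_{T-t}$. The point is that the extra Hessian-of-$\log k$ contribution is exactly absorbed by the drift shift $d\tilde B_r-dB_r$, so that it cancels rather than accumulates; establishing this cancellation is the crux of the clean form. For the potential, the $BC^1$ case uses Fubini, $\int_0^T\int_0^r dV(\tilde W_s)\,ds\,dr=\int_0^T (T-s)\,\langle\nabla V,\tilde W_s\rangle\,ds$, to fold the correction into the integrand as $-(T-r)\nabla V\,dr$, while the H\"older case keeps the second term of the general formula intact, now read along $\tilde x$ under the weight $Z_T^h$.

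Finally, for the $BC^1$ kernel identity I would restore the prefactors $k_T(x_0,y_0)$ and $e^{h(y_0)-h(x_0)}$ from the zeroth-order formula~\eqref{elaform-1-1} and check that the $x_0$-derivatives of these prefactors combine with the drift part of $\int_0^T\langle\tilde W_r,\tilde u_r\,d\tilde B_r\rangle$, which carries exactly $\nabla\log k_{T-r}-\nabla h$ through the shift $d\tilde B_r-dB_r$, to reproduce the stated expression; dividing by $p_T^{h,V}(x_0,y_0)=k_T(x_0,y_0)e^{h(y_0)-h(x_0)}\E[\beta_T^h]$ then gives a $\log$-derivative consistent with the first identity on the overlap $V\in BC^1$. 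The main obstacle I anticipate is the singular behaviour at the terminal time: the bridge drift $\nabla\log k_{T-t}$ and the density $M_t$ degenerate as $t\uparrow T$, so the stochastic integral must be shown to extend to $[0,T]$ and to commute with both the expectation and the limit. This is exactly where the standing hypotheses enter, namely the assumed $L^1$-convergence $\beta_t^h\to\beta_T^h$, the boundedness of $\tilde W$ coming from $\rho^h$ bounded below, and the upper bound on $\Phi^h-V$, to justify differentiating under the expectation and to control the endpoint limit; a secondary delicate point is making the $\nabla_\cdot d\log k_{T-t}$ cancellation rigorous rather than merely formal.
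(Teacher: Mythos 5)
Your overall skeleton --- start from the first-order semigroup formulas of Section 2, approximate $\delta_{y_0}$, convert to the semi-classical bridge by Girsanov, divide by the zeroth-order formula for the logarithmic version, and control the endpoint $t\uparrow T$ --- matches the paper. But the central mechanism you propose does not go through as stated. You insert an intermediate conditioning step ($x_T=y_0$, producing the $h$-Brownian bridge) and then invoke Lemma~\ref{lemma-Girsanov} to rewrite ``every $h$-bridge expectation'' as a semi-classical-bridge expectation. Lemma~\ref{lemma-Girsanov} compares the law of the \emph{unconditioned} $\tfrac12\Delta^h+Z$ diffusion with that of the semi-classical (Riemannian) bridge; it says nothing about the $h$-Brownian bridge. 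The Radon--Nikodym derivative between the $h$-Brownian bridge and the semi-classical bridge involves ratios of the unknown kernel $p^h_{T-t}$ to $k_{T-t}$, i.e.\ exactly the implicit object this section is designed to eliminate. The paper avoids the conditioned process altogether: it applies Theorem~\ref{thm:maintheorem} (resp.\ Theorem~\ref{Feynman-Kac-kernel-1}) to the explicit test function $f=\phi\, k_{T-t}$ with $\phi$ smooth, compactly supported and $\phi(y_0)=1$, then applies Lemma~\ref{lemma-Girsanov} to the unconditioned expectation, so that the factor $k_{T-t}(\tilde x_t)$ in the denominator of $M_t$ cancels against $f(\tilde x_t)=\phi(\tilde x_t)k_{T-t}(\tilde x_t)$, leaving $e^{-h(x_0)}k_T(x_0,y_0)\,\E\bigl[\phi(\tilde x_t)\beta_t^h e^{h(\tilde x_t)}\int_0^t\<\tilde W_r(v),\tilde u_r d\tilde B_r-(t-r)\nabla V dr\>\bigr]/t$; only then is $t\uparrow T$ taken, using $\tilde x_t\to y_0$, the upper bound on $\Phi^h-V$ and the bound on $|\tilde W|$ for dominated convergence, and the standard Gaussian-approximation computation on the left-hand side. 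Without this cancellation your endpoint limit has no handle.

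A second, smaller misconception: the reason the simple damped translation $\tilde W$ (involving only $\Ric^{\#}$ and $\Hess(h)$) suffices is \emph{not} that a putative $\nabla_\cdot d\log k_{T-t}$ term is ``absorbed by the drift shift $d\tilde B_r-dB_r$ and cancels.'' No such term ever arises: the derivative formula is proved for the unconditioned $h$-Brownian motion, whose damped translation $W$ by construction involves only $\Ric$ and $\Hess h$, and the Girsanov change of measure merely re-expresses the same path functional $\int_0^t\<W_s(v),u_s\,dB_s\>$ under the bridge law, where it reads $\int_0^t\<\tilde W_s(v),\tilde u_s\,d\tilde B_s\>$. There is no cancellation to establish, so it cannot be ``the crux.'' Likewise, in the paper's argument the prefactor $k_T(x_0,y_0)e^{h(y_0)-h(x_0)}$ is never differentiated in $x_0$; it emerges from the numerator $k_T(x_0)e^{-h(x_0)}$ of $M_t$, so your planned bookkeeping of ``$x_0$-derivatives of the prefactors'' combining with the drift part of the stochastic integral addresses a problem that does not occur.
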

Since $|\tilde W_r|$ is bounded, explicit estimates for $ {|\nabla \log p^{h,V}_T(\cdot,y_0)|_{x_0}} $ follow from these formulas. For example the following type of estimate holds :
\begin{corollary*} [\ref{pole-estimate}] Assume that the relevant data are bounded,
$$\begin{aligned}|\nabla p^{h,V}_T(\cdot,y_0)|_{x_0}\le &Ce^{h(y_0)-h(x_0)} |\beta^h_T|_\infty (2\pi t)^{-\f n 2}e^{-\f {d^2(x_0, y_0)}{2t}}J^{-\f 12}(x_0)\\
&\cdot\left(\f  {d(x_0, y_0)}T+ |\nabla h|_\infty+1+ \f 1 {\sqrt T}+T|dV|_\infty\right).\end{aligned}$$
\end{corollary*}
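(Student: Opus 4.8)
The plan is to derive the stated bound directly from the explicit $V\in BC^1$ formula of Theorem~\ref{thm:fof}. Fix $v\in T_{x_0}M$ with $|v|=1$ and recall that $\Psi\equiv 0$ when $Z=0$, so that $e^{\int_0^T(\Phi^h-V)(\tilde x_s)\,ds}=\beta_T^h$; the formula then reads
$$\f{d(p^{h,V}_T(\cdot,y_0))_{x_0}(v)}{k_T(x_0,y_0)}=e^{h(y_0)-h(x_0)}\,\E\left[\beta_T^h\,\f1T\int_0^T\<\tilde W_r(v),\tilde u_r\,d\tilde B_r-(T-r)\nabla V\,dr\>\right].$$
First I would bound $|\beta_T^h|\le|\beta_T^h|_\infty$ pointwise and pass the absolute value inside the expectation, reducing the problem to estimating $\f1T\,\E\big|\int_0^T\<\tilde W_r(v),\tilde u_r\,d\tilde B_r-(T-r)\nabla V\,dr\>\big|$. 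Throughout I would use that $|\tilde W_r|\le C$ uniformly, which holds because $\rho^h$ is bounded below.

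The key is to split $\tilde B$ into its martingale part and its drift. By definition $d\tilde B_r=dB_r+\tilde u_r^{-1}\nabla\log(e^{-h}k_{T-r})(\tilde x_r)\,dr$, where under the semi-classical bridge law $B$ is a genuine Brownian motion. The martingale piece $\int_0^T\<\tilde W_r(v),\tilde u_r\,dB_r\>$ is handled by Cauchy-Schwarz and the It\^o isometry: since $\tilde u_r$ is an isometry, its expected modulus is at most $(\int_0^T\E|\tilde W_r(v)|^2\,dr)^{1/2}\le C\sqrt T$, which after division by $T$ yields the $1/\sqrt T$ contribution. For the drift I would use
$$\nabla\log(e^{-h}k_{T-r})(x)=-\nabla h(x)-\f{d(x,y_0)\,\nabla d}{T-r}-\half\nabla\log J(x),$$
read off from the definition \eqref{kt} of $k_t$ together with $|\nabla d|=1$. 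The $-\nabla h$ and $-\half\nabla\log J$ terms are bounded, so after integrating against $|\tilde W_r(v)|$ over $[0,T]$ and dividing by $T$ they contribute $|\nabla h|_\infty$ and the constant term $1$ respectively. The $\nabla V$ term gives $\f1T\int_0^T(T-r)\,dr=T/2$ times $|\nabla V|_\infty$, i.e.\ the $T|dV|_\infty$ contribution.

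The main obstacle is the radial drift term $\f1T\int_0^T|\tilde W_r(v)|\,\f{d(\tilde x_r,y_0)}{T-r}\,dr$, because of the $1/(T-r)$ singularity at the terminal time. Here I would exploit the feature recalled in the introduction, that the radial process $d(\tilde x_r,y_0)$ of the semi-classical bridge is an $n$-dimensional Bessel bridge from $d(x_0,y_0)$ to $0$ on $[0,T]$, i.e.\ the modulus of an $\R^n$-valued Brownian bridge. Realising it as such, with each coordinate an independent one-dimensional Brownian bridge, one computes $\E[d(\tilde x_r,y_0)^2]=d^2(x_0,y_0)\,(T-r)^2/T^2+n\,r(T-r)/T$, whence by Jensen $\E[d(\tilde x_r,y_0)]\le(d^2(x_0,y_0)(T-r)^2/T^2+n\,r(T-r)/T)^{1/2}$. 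Integrating, and using $\sqrt{a+b}\le\sqrt a+\sqrt b$ together with $\int_0^T\sqrt{r/(T-r)}\,dr=\pi T/2$, gives $\int_0^T\f{\E[d(\tilde x_r,y_0)]}{T-r}\,dr\le d(x_0,y_0)+\f{\pi\sqrt n}{2}\sqrt T$; division by $T$ then produces exactly the $d(x_0,y_0)/T$ term and a further $1/\sqrt T$ contribution. Collecting the five contributions, taking the supremum over $|v|=1$, and substituting the explicit form \eqref{kt} of $k_T(x_0,y_0)$ yields the claimed estimate, with a constant $C$ depending only on $n$, $|\tilde W|_\infty$ and $|\nabla\log J|_\infty$.
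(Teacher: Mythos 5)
Your proposal is correct and follows essentially the same route as the paper's own proof: both start from the $V\in BC^1$ formula of Theorem \ref{thm:fof}, bound $\beta_T^h$ and $\tilde W$ in sup norm, split $\tilde B$ into its martingale part (handled by Cauchy--Schwarz and the It\^o isometry) and the drift $\nabla\log(e^{-h}k_{T-r})$, and control the singular radial term via the $n$-dimensional Bessel bridge second moment $\E d^2(\tilde x_r,y_0)=d^2(x_0,y_0)(T-r)^2/T^2+nr(T-r)/T$ together with $\sqrt{a+b}\le\sqrt a+\sqrt b$. The only cosmetic difference is that the paper retains the explicit $e^{-Kr/2}$ damping factors from $|\tilde W_r(v)|\le e^{-Kr/2}|v|$ rather than absorbing them into a uniform constant at the outset.
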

Precise Gaussian estimates and formulas  for heat kernels and their derivatives using semi-classical bridge were obtained by  Aida \cite{Aida01} for asymptotically flat Riemannian manifolds with a pole where  the derivatives of $\log J$ up to order 4, the Riemannian curvature and the derivative of the Ricci curvature are assumed to be bounded. Our conditions, see Corollary \ref{pole-estimate} in section 3, are  weaker.

\section{First order Feynman-Kac formula}\label{section2}

Let $\wedge$ denote the space of differential forms and we denote by $d^*$  the $L_2$ adjoint of the exterior differential  operator $d$ with respect to the measure $e^{2h}dx$ where $dx$ is the volume measure.
Then $d+d^*$ and all its powers are essentially self-adjoint on $L^2(\wedge, e^{2h}dx)$ and we denote by the same notation their closures, see \cite{Li-thesis,Li-Myers}. Let $\Delta^h$ denote the weighted Laplace-Beltrami operator $\Delta^h=(d+d^*)^2$ with its initial domain  smooth and compactly supported differential forms. 

For $k \in 0,1,\cdots$ denote by $H^k$ the completion of $H^k_0$, where 
\begin{equation*}
H^k_0 = \bigg\lbrace f \in C^\infty: |f|^2_{H^k} =  \sum_{j=0}^k |\nabla^{(j)}f|^2_{L_2}  < \infty\bigg\rbrace,
\end{equation*}
under the norm $|\cdot |_{H^k}$. Denote by $\overline{C_K^\infty}^{H_k}$ the closure of $C_K^\infty$ under $|\cdot |_{H^k}$. Denote also by $d^*$ the dual of $d$ in $L_2(M; dx)$, taking $h=0$,  the latter having initial domain $C^\infty_K$. Then the Laplace-Beltrami operator on functions is the closure of $-d^* d$ and for any complete Riemannian manifold $\Dom(d) = \overline{C^\infty_K}^{H^1} = H^1$. For higher order derivatives the corresponding statements  hold for manifolds with bounded geometry, see \cite{Aubin}. For $k=2$, $\overline {C_K^\infty}^{H^2}=H^2$ if the injectivity radius of $M$ is positive and if the Ricci curvature is bounded below, see \cite{Hebey}.  We avoid these assumptions.

\subsection{Basic formulas}
In this section we obtain the basic formulas for $dP_t^{h,V}$ which we will use to obtain estimates and to obtain a representation for $dp_t^{h,V}$. The usual differentiation formulas concerning 
$dP_t^{h}f$ relies on the intertwining property that $d$ commutes with $P_t^h$, which follows from the essential self-adjointness of $d+d^*$ and whose proof can be found in~\cite{Li-thesis}. 

Suppose that $V$ is bounded, then the operator $f\to Vf$ is $\Delta^h$ bounded and  by the Kato-Rellich theorem, $\f 12\Delta^h-V$ 
is self-adjoint on the domain of $\Delta^h$ and essentially self-adjoint on $C_K^\infty$, for more general potentials see \cite{Oleinik, Shubin-01}. 
By functional calculus $e^{-t(\f 12 \Delta^h-V)} $ is a strongly continuous contraction semi-group on $L_2(M; e^{2h}dx)\cap L_b$, where the $L_2$ space is defined by the weighted measure $e^{2h}dx$. Furthermore if $f \in L_2(M; e^{2h}dx)$ then  $e^{-t(\f 12 \Delta^h-V)} f$ belongs to the domain of $ \Delta^h$.
By direct computation $\E \left(f(x_t) \V_t\right)$, where $x_t$ is a Brownian motion with drift $\nabla h$ and $\V_t = e^{-\int_0^t V(x_r) dr}$, is also a strongly continuous contraction semi-group on $L_2\cap L_\infty$ with generator $\f 12 \Delta^h-V$ and core $C_K^\infty$. Consequently
$$e^{-t(\f 12 \Delta^h-V)} f=\E \left(f(x_t) \V_t\right), \qquad f\in L_2\cap L_\infty.$$
Furthermore $g_t:=e^{-t(\f 12 \Delta^h-V)} f$ satisfies the variation of constant formula
$$g_t=P_t^hf+\int_0^t P^h_{t-s}(Vg_s)\, ds,$$
and consequently they are $C^{1,2}$ functions and solve the parabolic equation. The solution measure has a density $p^{h,V}_t$ with respect to the volume measure.

 The h-Brownian motion we use will be given by the canonical construction  below.
Let $\lbrace B_t^i \rbrace_{i=1}^n$ be a  family of independent one-dimensional Brownian motions and set $B_t=(B_t^1, \dots, B_t^n)$.
Let $\lbrace H_i \rbrace$ be canonical horizontal vector fields on the orthonormal frame bundle $OM$ of $M$, associated to an orthonormal basis of $\R^n$.  Let $\pi$ be the projection that takes a frame at $x\in M$ to $x$ and for  $u\in \pi^{-1}(x)$ let $\h_u$ denote the horizontal lift from the tangent space $T_xM$ at $x$ to the tangent space $T_uOM$ at $u$.
If the $h$-Brownian motion is complete (non-explode),  then the following stochastic differential equation (SDE) is complete, 
\begin{equation}\label{horizontal}
du_t = \sum_{i=1}^n H_i(u_t) \circ dB^i_t+\h_{u_t} \left({\nabla h}\right)\; dt.
\end{equation}
Furthermore  $x_t := \pi (u_t)$ is a h-Brownian motion on $M$ starting at $x_0:=\pi(u_0)$.  If $\Ric-2\Hess (h)$ is bounded from below, then the $h$-Brownian motion is complete.  If $h$ vanishes it is sufficient to assume that $\inf_{v\in T_xM: |v|=1}\{\Ric_x(v,v)\}\ge - \alpha(r(x))$ where $\alpha$ grows at most quadratically. 
 
Let us consider the damped parallel transport equation along $x_t$
\begin{equation}\label{damped}
{D \over dt }W_t=-{1\over 2}  {\Ric}_{x_t}^{\#}(W_t)+\Hess(h)({W_t}), \quad W_0 = {\id}_{T_{x_0}M}
\end{equation}
and denote by $W_t$ its solution.  The notation ${D\over dt}W_t$ denotes the covariant derivative  ${D\over dt}W_t = u_t \frac{d}{dt} u_t^{-1} W_t$ and so equation (\ref{damped}) is interpreted as follows: 
${u}_t^{-1}{W}_t$ solves the equation
$
\f d {dt} w_t = -\frac{1}{2}({u}_t^{-1}{\Ric}^\sharp_{{x}_t}{u}_t)  w_t+ {u_t}^{-1} \Hess(h)( u_t  w_t)$,
with initial value $ w_0 ={ \id}_{\R^n}$.

We need the following formulation for the Feynman-Kac formula and the differentiation formula \cite{Li-thesis}, whose elementary proofs are included for completeness. We recall the notation $\rho^h(x)=\inf_{v\in T_xM, |v|=1}\{\Ric(v,v)-2\Hess(h)(v,v)\}$. If $\rho^h\ge K$ then $|W_t|\le e^{-Kt}$.

\begin{lemma}\label{lem-one} \begin{enumerate}
\item[(a)] Suppose that the $h$-Brownian motion $(x_t)$ does not explode and $V$ is bounded from below.
 If $P_t^{h,V}f$ is a $C^{1,2}$ solution to the parabolic equation, then for any $T>0$,
\begin{equation}\label{eqn-lemma1}
\V_s P^{h,V}_{T-s}f(x_s) = P^{h,V}_Tf(x_0) + \int_0^s \V_r dP^{h,V}_{T-r}f(u_r \,dB_r), \quad 0\le s \le T.
\end{equation}
\item[(b)] Suppose that $\rho^h$ is bounded from below (in particular  the $h$-Brownian motion does not explode ) and $f\in L_\infty$, then for  $0<t<T$,
\begin{equation}\label{differentation}
\E \left(P^{h}_{T-t}f(x_t) \int_0^t\left \<u_r\, dB_r, W_r(v)\right\> \right)  =t \,dP_T^hf(v).\
\end{equation}

\end{enumerate}\end{lemma}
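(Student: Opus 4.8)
The plan is to prove the two parts separately: part (a) is a direct It\^o computation in which the parabolic equation forces the drift to vanish, and part (b) is obtained by pairing the martingale of part (a) against the stochastic integral $\int_0^\cdot\langle W_r v,u_r\,dB_r\rangle$ and identifying the cross bracket. For part (a), write $g(s,x)=P^{h,V}_sf(x)$, which by hypothesis is $C^{1,2}$ and solves $\partial_s g=(\f12\Delta^h-V)g$. I would apply It\^o's formula to the product $\V_s\,g(T-s,x_s)$ along the non-exploding $h$-Brownian motion $x_s$, whose generator is $\f12\Delta^h$. Two drift contributions appear: the time-derivative $-\partial_t g$ together with the spatial term $\f12\Delta^h g$ gives, via the equation, $+V(x_s)\,g\,ds$; and the bounded-variation factor contributes $d\V_s=-V(x_s)\V_s\,ds$. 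By the product rule these cancel, leaving only the martingale differential $\V_s\,dP^{h,V}_{T-s}f(u_s\,dB_s)$, and integrating from $0$ to $s$ yields (\ref{eqn-lemma1}). Since this is an identity of semimartingales, no integrability beyond the local one is needed; a sequence of stopping times exhausting $M$ handles the possibly unbounded gradient and the manifold-valued state.

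For part (b), specialise to $V=0$, so $\V\equiv 1$ and part (a) gives that $N_s:=P^h_{T-s}f(x_s)$ is a local martingale with $dN_s=dP^h_{T-s}f(u_s\,dB_s)$. Setting $A_s=\int_0^s\langle W_r v,u_r\,dB_r\rangle$ and using the orthonormal frame identity $\sum_i\langle W_r v,u_r e_i\rangle\,u_r e_i=W_r v$, the cross bracket is $d\langle N,A\rangle_r=(dP^h_{T-r}f)_{x_r}(W_r v)\,dr$. Because $N_0A_0=0$ and both factors are martingales, $\E[N_tA_t]=\E\left[\int_0^t(dP^h_{T-r}f)_{x_r}(W_r v)\,dr\right]$. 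The crux is that the integrand $M_r:=(dP^h_{T-r}f)_{x_r}(W_r v)$ is itself a martingale: the one-form $\beta(t,\cdot):=d\,P^h_tf$ solves the Hodge heat equation on $1$-forms, because $d$ commutes with the semigroup generated by $\Delta^h$ (essential self-adjointness of $d+d^*$, \cite{Li-thesis}), and a covariant It\^o expansion of $\beta(T-r,x_r)(W_r v)$ has its Weitzenb\"ock curvature term cancelled exactly by the damped transport equation (\ref{damped}). Hence $\E[M_r]=M_0=(dP^h_Tf)(v)$ for every $r$, and therefore $\E[N_tA_t]=t\,dP^h_Tf(v)$, which is (\ref{differentation}).

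It remains to upgrade these to genuine (not merely local) martingale identities, which is routine here but worth flagging. Since $t<T$, the form $dP^h_{T-r}f$ is evaluated only at times $T-r\ge T-t>0$, where it is bounded for $f\in L_\infty$, and the hypothesis $\rho^h\ge K$ gives $|W_r|\le e^{-Kr}$; together these provide the integrability needed to pass from local to true martingales and to interchange expectation with the $dr$-integral by Fubini. The step I expect to be the main obstacle is precisely the martingale property of $M_r$, equivalently the intertwining $dP^h_t=P^h_t d$ together with the probabilistic representation of the one-form semigroup through $W_t$: everything else is drift cancellation and a bracket computation, whereas this is where the self-adjointness of $d+d^*$ and the Weitzenb\"ock identity matching the curvature term with the damped-transport ODE (\ref{damped}) are genuinely used, and where one must check that no boundary contribution intrudes at the endpoint $s=T$.
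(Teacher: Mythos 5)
Your argument is correct and follows essentially the same route as the paper: part (a) is the same It\^o/drift-cancellation computation, and part (b) is the paper's pairing (via It\^o isometry) of the martingale from (a) against $\int_0^\cdot\langle W_r(v),u_r\,dB_r\rangle$, with the crux being exactly the constancy in expectation of $(dP^h_{T-r}f)(W_r(v))$, coming from the intertwining $dP^h_t=e^{-t\Delta^{1,h}/2}d$ and the Weitzenb\"ock term cancelling against the damped-transport equation. The one point where the paper is more careful than your "it is bounded for $f\in L_\infty$" remark is the passage from $BC^1$ to $L_\infty$: the paper first proves the identity for $f\in BC^1$, deduces from it the smoothing bound $|P^h_tf(x)-P^h_tf(y)|\le C(t)\,|f|_\infty\, d(x,y)$ (hence $P^h_sf\in BC^1$ for merely bounded $f$), and only then extends to $f\in L_\infty$ by applying the formula to $\tilde f=P^h_{(T-t)/2}f$ and invoking the semigroup property.
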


\begin{proof}
(a) By the assumption on the $h$-Brownian motion, $u_s$ exists for all time. Since $V$ is bounded from below,  the solution to the equation $\f d{ds}g_s = -V(x_s) g_s$ does not explode and is satisfied by $\V_s$.
We may therefore apply It\^o's formula to the pair of stochastic processes $x_s, \V_s$ using the identity $dx_s = u_s \circ dB_s+\nabla h(x_s)\,ds$ to obtain (\ref{eqn-lemma1}).

(b) Suppose that $V=0$ and  $\rho^h\ge K$ where $K$ is a constant, it is sufficient to prove the conclusion for $f\in BC^1$. In fact  using the  unit speed geodesic from $x$ to $y$ and formula (\ref{differentation}) we conclude the following :
$$|P_t^hf(x)-P_t^hf(y)|\le C(t)|f|_\infty d(x,y), \qquad f\in BC^1$$
where $d(x,y)$ denotes the distance between $x$ and $y$. Let us denote
by $P^h(t,x,\cdot)$ (and $P^h(t, y,\cdot)$ respectively)  the  transition probability from $x$ (from  $y$). Then the total  variation norm,  $|P^h(t,x,\cdot)-P^h(t, y,\cdot)|_{TV}$ is controlled by $C(t) d(x,y)$. Consequently, $P_t^hf$ belongs to $BC^1$ whenever $f$ is bounded. For any $t<T$, setting $\tilde T=(T+t)/2$ and $\tilde f=P^h_{(T-t)/2} f$,   we then apply the formula  to $\tilde f$ to obtain a formula for  $t \, d P_{\tilde T} \tilde f$, (\ref{differentation}) follows from the semigroup property.  

We return to the proof of  (\ref{differentation}) where $f$ is a $BC^1$ function. Since $P_t^hf$ is $C^{1,2}$ and
$\int_0^t\<W_s(v), u_s dB_s\>$ is an $L^2$ martingale, with which we
multiply both sides of (\ref{eqn-lemma1}).   Use the It\^o isometry we obtain
$$\begin{aligned}
&\E \left( \int_0^t \<u_rdB_r, W_r(v)\> P^{h}_{T-t}f(x_t)\right)  \\
&= \E \left(\int_0^t dP^{h}_{T-r}f(u_r dB_r)\int_0^t \<u_rdB_r, W_r(v)\>\right)=t \,dP_T^hf(v).
\end{aligned}$$
We have used the Markovian property and that $\E[  df(W_r(v))]=dP_T^hf(v)$. The proof is complete.
 \end{proof}

\begin{theorem}
\label{Feynman-Kac-kernel-1}
Assume that $\rho^h$ is bounded from below and $V$ is a bounded H\"{o}lder continuous function.
Then for all $f\in L_\infty$ and $v\in T_{x_0}M$,
$$\begin{aligned}
(dP^{h,V}_tf)(v)= &\frac{1}{t} \E \left[ f(x_t) \int_0^t \< W_s(v),u_s dB_s\>\right]\\
&+ \E \left[   f( x_t) \left(\int_0^t e^{-\int_{t-s}^t V(x_u)du}
\f {V (x_{t-s})} {t-s}\int_0^{t-s} \< W_r(v),u_r dB_r\>\right)ds\right].\end{aligned}$$
It follows that $P_t^{h,V}f\in BC^1$  for any $0< s \le t$ and
 the following Feynman-Kac formula holds: 
 $P^{h,V}_tf(x_0)=\E\left [\V_s P^{h,V}_{t-s}f(x_s) \right]$.
\end{theorem}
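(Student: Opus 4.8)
The plan is to start from the variation of constants formula $P^{h,V}_t f = P_t^h f + \int_0^t P^h_{t-s}(V P^{h,V}_s f)\,ds$ recorded above and differentiate it in the starting point $x_0$, treating the two summands separately. Since $V$ is bounded and each $P^{h,V}_s f$ is bounded, the integrand $V P^{h,V}_s f$ lies in $L_\infty$, so Lemma~\ref{lem-one}(b) applies to $P^h_{t-s}$ acting on it; this is why the explicit computation needs only boundedness of $V$. The H\"older hypothesis enters beforehand, to guarantee (via the regularity already established) that $P^{h,V}_s f$ is a genuine $C^{1,2}$ solution, so that Lemma~\ref{lem-one}(a) and the differentiation formula are legitimately available.

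For the first summand I would apply formula (\ref{differentation}) with outer time $t$ and conditioning time $s<t$, giving $s\,dP_t^h f(v)=\E[P^h_{t-s}f(x_s)\int_0^s\<u_r\,dB_r,W_r(v)\>]$, and then let $s\uparrow t$. Writing $N_s=\int_0^s\<u_r\,dB_r,W_r(v)\>$, both $N_s$ and the martingale $P^h_{t-s}f(x_s)=\E[f(x_t)\mid\mathcal F_s]$ converge in $L^2$ as $s\uparrow t$ (the latter by martingale convergence, using $f\in L_\infty$), so the right-hand side tends to $\E[f(x_t)N_t]$ and yields the first term $\frac1t\E[f(x_t)\int_0^t\<W_s(v),u_s\,dB_s\>]$.

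For the second summand I would differentiate under the $ds$-integral, applying (\ref{differentation}) to $P^h_{t-s}$ and the $L_\infty$ function $V P^{h,V}_s f$ to obtain $\frac{1}{t-s}\E[(VP^{h,V}_s f)(x_{t-s})\int_0^{t-s}\<W_r(v),u_r\,dB_r\>]$. The crux is the Markov property: writing $(P^{h,V}_s f)(x_{t-s})=\E[f(x_t)e^{-\int_{t-s}^t V(x_u)\,du}\mid\mathcal F_{t-s}]$ and invoking the tower property with the $\mathcal F_{t-s}$-measurable factor $V(x_{t-s})\int_0^{t-s}\<W_r(v),u_r\,dB_r\>$ turns this into $\frac{1}{t-s}\E[f(x_t)e^{-\int_{t-s}^t V(x_u)\,du}V(x_{t-s})\int_0^{t-s}\<W_r(v),u_r\,dB_r\>]$, which after Fubini is exactly the second term of the statement.

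Both the differentiation-under-the-integral and the Fubini steps, as well as the integrability needed for the concluding $BC^1$ claim, rest on one estimate: since $\rho^h\ge K$ gives $|W_r|\le e^{-Kr}$, the martingale integral over $[0,\tau]$ has $L^2$-norm $O(\sqrt\tau)$, so $\frac{1}{t-s}\E[\cdots\int_0^{t-s}\cdots]=O((t-s)^{-1/2})$, which is integrable in $s$ on $[0,t]$ despite the apparent singularity at $s=t$. This uniform-in-$x_0$ bound dominates the $s$-derivative and legitimises the interchange, and I expect controlling this near-$s=t$ singularity, together with the Markov-property manipulation that generates the factor $e^{-\int_{t-s}^t V}$, to be the main point requiring care. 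Finally, the resulting formula shows $|dP^{h,V}_t f|$ is bounded (first term $O(t^{-1/2})|f|_\infty$, second term $O(\sqrt t)\,|V|_\infty|f|_\infty$, using $V\ge 0$), hence $P^{h,V}_t f\in BC^1$; and since $P^{h,V}_t f$ is then $C^{1,2}$ with bounded gradient, the stochastic integral in (\ref{eqn-lemma1}) is a true martingale, so taking expectations in Lemma~\ref{lem-one}(a) gives $P^{h,V}_t f(x_0)=\E[\V_s P^{h,V}_{t-s}f(x_s)]$.
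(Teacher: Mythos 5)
Your proposal is correct and follows essentially the same route as the paper: differentiate the variation-of-constants formula $P^{h,V}_tf=P_t^hf+\int_0^t P^h_{t-s}(VP^{h,V}_sf)\,ds$ term by term, apply Lemma~\ref{lem-one}(b) to the bounded function $VP^{h,V}_sf$, control the $O((t-s)^{-1/2})$ singularity via $|W_r|\le e^{-Kr/2}$ to justify the interchange, use the Markov property to produce the factor $e^{-\int_{t-s}^tV}$, and take expectations in (\ref{eqn-lemma1}) for the final Feynman--Kac identity. The only cosmetic difference is your explicit $L^2$ martingale-convergence limit $s\uparrow t$ for the first summand, where the paper invokes the gradient formula directly.
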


\begin{proof}
If $f\in  L_\infty\cap L_2$ so does $VP^{h,V}_sf $, then $P^h_{t-s}(VP^{h,V}_sf)$ is smooth.
 Since  $\rho^h\ge K$ for a constant $K$,   apply  part (b) of Lemma  \ref{lem-one} to obtain
$$d(P_{t-s}^h (VP^{h,V}_sf) )(v)= \f 1 {t-s} \E \left[  (V P^{h,V}_sf)(x_{t-s}) \int_0^{t-s} \< W_r(v),u_r dB_r\>\right].$$
By the Feynman-Kac formula, $P_t^{h,V}f=\E \left(f(x_t)
\V_t\right)$ is bounded, so $$\left|d(P_{t-s}^h (VP^{h,V}_sf) )(v)\right|\le \f 1 {t-s} |V|_\infty \left| P^{h,V}_sf\right|_\infty \sqrt{ \int_0^{t-s}\E |W_r(v)|^2 dr}.$$
From the equation for $W^h_t$ we also have the inequality:  $|W_t^h(v)|\le |v| e^{-\f 12 \int_0^t \rho^h(x_s) ds} $. Since  $\rho^h$  is bounded from below, there exists a constant $C$ depending on $|V|_\infty$ and $|f|_\infty$ such that 
 $\left|d(P_{t-s}^h (VP^{h,V}_sf) )(v)\right|\le 
C\f 1 {\sqrt{t-s}}$.
 We may differentiate both sides of the variation of constants formula $P^{h,V}_tf=P_t^hf+\int_0^t P^h_{t-s}(VP^{h,V}_sf)\, ds$ whose last two terms  is differentiable and so is $P_t^{h,V}f$.
For any $v\in T_{x_0}M$, we have\begin{equs}
dP^{h,V}_tf(v)
=&dP^h_tf(v)+ \int_0^t \f 1 {t-s} \E \left[  (V P^{h,V}_sf)(x_{t-s}) \int_0^{t-s} \< W_r(v),u_r dB_r\>\right] ds.\end{equs}
By  the Markov property  we obtain the identity\begin{equs}
&\E \left[  (V P_s^{h,V}f)(x_{t-s}) \int_0^{t-s} \< W_r(\cdot),u_r dB_r\>\right]\\
&=\E \left[  V (x_{t-s}) f( x_t) e^{-\int_{t-s}^t V(x_u)du}
\int_0^{t-s} \< W_r(\cdot),u_r dB_r\>\right].
\end{equs}
If $f\in L_\infty$, we take an approximating sequence to conclude the required formula, from which and that $|W_t|\le e^{-Kt}$ for a constant $K$ we see that $P_t^{h,V}f$ is $C^1$ with bounded derivative. By the method in \cite{Li-Hessian} we can prove directly that $P_t^{h,V}$ is $C^2$, but here we borrow from  elliptic regularity.
We take expectations of both sides of (\ref{eqn-lemma1}), whose stochastic integral term $\int_0^s \V_r dP^{h,V}_{t-r}f(u_r \,dB_r)$ is an $L^2$ bounded martingale,  to  conclude 
that $\E [\V_s P^{h,V}_{t-s}f(x_s) ]= P^{h,V}_tf(x_0)$ and by taking $s\to t$ we see that 
$$ P^{h,V}_tf(x_0)=\E[\V_tf(x_t)],$$
and that $P_t^{h,V}f$ is bounded, concluding the proof.
\end{proof}

\subsection{Estimates for general manifolds}
In this section we establish another differentiation formula for a more regular potential and use it for obtaining gradient estimates.
\begin{lemma} \label{lemma3-V}
Suppose that the $h$-Brownian motion $(x_t)$ does not explode and $T$ is a positive number. Assume that $V$ is bounded and in $C^{1,\alpha}$. If $f \in L_\infty\cap C^1$ then for all $0\leq t<T$ and $v\in T_{x_0}M$ we have
\begin{equation}\label{eqn-lemma3}
\begin{aligned}
\V_t \cdot \left( dP^{h,V}_{T-t}f\right) (W_t(v)) = \,&dP^{h,V}_T f(v)+\int_0^t \V_s\cdot  \left(\nabla dP^{h,V}_{T-s}f\right) (u_s dB_s,W_s(v))\\
&+ \int_0^t \V_s \cdot dV(W_s(v))\cdot  P^{h,V}_{T-s} f (x_s)\,ds.
\end{aligned}
\end{equation}
\end{lemma}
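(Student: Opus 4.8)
The plan is to obtain (\ref{eqn-lemma3}) as a direct application of It\^o's formula to the scalar semimartingale $N_s := \V_s\,(dP^{h,V}_{T-s}f)(W_s(v))$ for $0\le s\le t$. Write $g_s := P^{h,V}_{T-s}f$ and let $\alpha_s := dg_s$, a time-dependent $1$-form. Since $0\le t<T$, the argument $T-s$ stays bounded away from $0$, so by parabolic regularity (using $V\in C^{1,\alpha}$ and $f\in C^1$, together with the variation-of-constants representation already invoked in the proof of Theorem~\ref{Feynman-Kac-kernel-1}) the functions $g_s$ are $C^{1,2}$, with $\alpha_s$ and its covariant derivative $\nabla\alpha_s=\nabla dg_s$ jointly continuous. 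This is exactly what makes It\^o's formula applicable and the Hessian term in (\ref{eqn-lemma3}) meaningful. Because $g_s=P^{h,V}_{T-s}f$ solves the equation backward in $s$, it satisfies $\partial_s g_s = -\f12\Delta^h g_s + V g_s$.

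Next I would derive the equation satisfied by the $1$-form $\alpha_s$. Applying $d$ and using the intertwining relation $d\Delta^h=\Delta^{h,1}d$ (the commutation of $d$ with the weighted Laplacian recalled at the start of this section, cf. \cite{Li-thesis}), together with $d(Vg_s)=V\alpha_s+g_s\,dV$, gives $\partial_s\alpha_s = -\f12\Delta^{h,1}\alpha_s + V\alpha_s + g_s\,dV$, where $\Delta^{h,1}$ denotes the weighted Hodge-type Laplacian on $1$-forms. The Weitzenb\"ock formula expresses $\Delta^{h,1}\alpha_s$ as the weighted Bochner (connection) Laplacian $\mathrm{tr}\,\nabla^2\alpha_s+2\nabla_{\nabla h}\alpha_s$ minus the Bakry--\'Emery curvature term $(\Ric-2\Hess h)(\alpha_s)$.

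The computation of $d[\alpha_s(W_s(v))]$ then produces three groups of drift terms: the time-derivative $(\partial_s\alpha_s)(W_s(v))$; the second-order It\^o contribution from the horizontal SDE (\ref{horizontal}), whose scalarization is precisely $\f12\big(\mathrm{tr}\,\nabla^2\alpha_s+2\nabla_{\nabla h}\alpha_s\big)(W_s(v))$; and the term $\alpha_s\big(\frac{D}{ds}W_s(v)\big)$ coming from the damped transport equation (\ref{damped}). I expect the Bochner-Laplacian contributions to cancel identically, and --- this is the crux --- the two curvature contributions to cancel as well: the Weitzenb\"ock term $\f12(\Ric-2\Hess h)(\alpha_s)(W_s(v))$ is matched exactly by $\alpha_s\big(\frac{D}{ds}W_s(v)\big)=-\f12\,\alpha_s\big((\Ric-2\Hess h)^\sharp W_s(v)\big)$, using the symmetry of $\Ric$ and $\Hess h$. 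This matching is the very reason for the definition (\ref{damped}) of $W_s$. What survives from $d[\alpha_s(W_s(v))]$ is the martingale part $(\nabla dg_s)(u_s\,dB_s,W_s(v))$ together with the drift $\big(V\alpha_s(W_s(v))+g_s\,dV(W_s(v))\big)\,ds$.

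Finally I would bring in the factor $\V_s$, whose finite-variation differential is $d\V_s=-V(x_s)\V_s\,ds$. Its drift contribution $-V\V_s\alpha_s(W_s(v))$ cancels the surviving $\V_s V\alpha_s(W_s(v))$, leaving $dN_s = \V_s(\nabla dP^{h,V}_{T-s}f)(u_s\,dB_s,W_s(v)) + \V_s\,dV(W_s(v))\,P^{h,V}_{T-s}f(x_s)\,ds$. Integrating over $[0,t]$ and noting that $N_0=dP^{h,V}_Tf(v)$ (since $\V_0=1$ and $W_0=\id$) yields (\ref{eqn-lemma3}). It is worth remarking that only non-explosion of $x_t$ is needed here, not a lower bound on $\rho^h$: since we never take expectations, it suffices that the stochastic integral be a well-defined local martingale, for which continuity of the integrands on the compact interval $[0,t]$ is enough. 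The main obstacle is therefore twofold --- first, extracting enough spatial regularity of $g_s$ (hence of $\nabla dg_s$) from the $C^{1,\alpha}$ hypothesis on $V$ so that It\^o's formula and the Weitzenb\"ock identity apply classically; and second, verifying the curvature cancellation, which hinges on the precise compatibility between the Weitzenb\"ock term and the damped-transport equation (\ref{damped}).
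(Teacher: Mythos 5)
Your proposal is correct and follows essentially the same route as the paper: Itô's formula applied to $\V_s\,(dP^{h,V}_{T-s}f)(W_s(v))$, the intertwining $d\Delta^h=\Delta^{1,h}d$ to get the backward equation for the $1$-form $dP^{h,V}_{T-s}f$, the Weitzenb\"ock/Bochner identity to match the second-order It\^o term plus the $\nabla h$ drift against $\frac12\Delta^{1,h}$, the cancellation of the curvature term against $\frac{D}{ds}W_s$ from (\ref{damped}), and the cancellation of $V\,dP^{h,V}_{T-s}f(W_s(v))$ against the drift of $\V_s$. Your observation that only non-explosion (not a lower bound on $\rho^h$) is needed is consistent with the lemma's hypotheses and with the paper's argument.
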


\begin{proof}
Since $f, V \in C^{1, \alpha}$, the solution $P^{h,V}_t f$ is three times differentiable in space and we may differentiate both sides of the parabolic equation. Since $P^{h,V}_t f \in \Dom(d)$, it follows that $d\Delta (P^{h,V}_t f) = \Delta^{1,h}d(P^{h,V}_t f)$, 
see \cite [Chapter 2]{Li-thesis} or \cite{Gaffney} the case for $h=0$.
Consider the function $(t,\alpha,(x,v)) \in [0,T]\times\R_+ \times TM \mapsto (\alpha,dP^{h,V}_{T-t}f(v))$ and apply  It\^{o}'s formula to it and  to the process $(t, \V_t, W_t(v))$ to obtain
\begin{equation}\label{eqn-prooflem3}
\begin{aligned}
&\V_tdP^{h,V}_{T-t}f (W_t(v))-dP^{h,V}_Tf(v)\\
=\text{ }& \int_0^t \V_s \nabla (d P^{h,V}_{T-s}f)(u_sdB_s, W_s(v))+\int_0^t \nabla  \V_s(d P^{h,V}_{T-s}f)(\nabla h(x_s), W_s(v))ds\\
&+\int_0^t  \V_s  \left( {\frac{\partial}{\partial s}} dP^{h,V}_{T-s}f \right)(W_s(v))\,ds+\frac{1}{2}\int_0^t \V_s \tr \nabla^2 (dP^{h,V}_{T-s}f)(W_s(v))\,ds\\
&+\int_0^t \V_s dP^{h,V}_{T-s}f \left({\frac{D}{ds}} W_s(v)\right)ds
-\int_0^t V (x_s)\V_s dP^{h,V}_{T-s}f (W_s(v))\,ds.
\end{aligned}
\end{equation}
Using Bochner's formula, $\Delta^{1,h} = \trace \nabla^2 +2L_{\nabla h}-{\Ric}^\sharp$ for the Laplace-Beltrami operator $\Delta^1$ on differential $1$-forms,  the definition of the Lie derivative and  equation (\ref{damped}), we thus have
\begin{equation*}
\begin{aligned}
&\V_tdP^{h,V}_{T-t}f (W_t(v))- dP^{h,V}_Tf(v)\\
=&\int_0^t \V_s \nabla (d P^{h,V}_{T-s}f)(u_sdB_s, W_s(v))-\int_0^t V (x_s)\V_s dP^{h,V}_{T-s}f (W_s(v))\,ds\\
&+\int_0^t  \V_s  \left( {\frac{\partial}{\partial s}} dP^{h,V}_{T-s}f \right)(W_s(v))\,ds+\frac{1}{2}\int_0^t \V_s \Delta^{1,h}(dP^{h,V}_{T-s} f)(W_s(v))\,ds.
\end{aligned}
\end{equation*}
 We can commute the time and space derivatives and also commute $\Delta^h$ with $d$  to obtain
\begin{equation*}
\frac{\partial}{\partial s}d(P^{h,V}_{T-s}f) + \frac{1}{2}\Delta^{1,h} (dP^{h,V}_{T-s}f) = Vd(P^{h,V}_{T-s}f) + (dV) P^{h,V}_s f.
\end{equation*}
By substituting the into the earlier equation we then obtain, after some cancellation, the required formula.
\end{proof}

\begin{proposition}\label{lem-four}
Suppose that $\Ric-2\Hess (h)\ge K$  and that $V \in  BC^1\cap C^{1, \alpha}$. Then for all $0\le t < T$, $v\in T_{x_0}M$ and $f \in L_\infty\cap C^1$, we have the formula
\small
\begin{equation}\label{eqn-five}
\E\left[ \V_t(dP^{h,V}_{T-t} f)(W_t(v))\right] = d(P^{h,V}_T f)(v) + \E \left[ \int_0^t \V_s dV(W_s(v)) P^{h,V}_{T-s}f(x_s)\,ds\right].
\end{equation}
\end{proposition}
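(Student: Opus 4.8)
The plan is to obtain (\ref{eqn-five}) simply by taking expectations on both sides of the pathwise identity (\ref{eqn-lemma3}) proved in Lemma \ref{lemma3-V}. The hypotheses imposed here, namely $\Ric-2\Hess(h)\ge K$ (so that $\rho^h\ge K$ and the $h$-Brownian motion is complete), $V\in BC^1\cap C^{1,\alpha}$ and $f\in L_\infty\cap C^1$, are exactly those under which Lemma \ref{lemma3-V} applies and gives
$$\V_t(dP^{h,V}_{T-t}f)(W_t(v)) = dP^{h,V}_Tf(v) + N_t + \int_0^t \V_s\, dV(W_s(v))\, P^{h,V}_{T-s}f(x_s)\,ds,$$
where $N_t:=\int_0^t \V_s\,(\nabla dP^{h,V}_{T-s}f)(u_s\,dB_s,W_s(v))$ is the It\^o integral term. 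Thus the whole content reduces to showing $\E[N_t]=0$, after which Fubini disposes of the Lebesgue integral term and the remaining two terms pass through the expectation directly.

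First I would collect the uniform bounds available on $[0,t]$. Since $V\ge 0$ we have $\V_s\le 1$, and by the contraction property $|P^{h,V}_{T-s}f|_\infty\le |f|_\infty$. The curvature bound $\rho^h\ge K$ gives $|W_s(v)|\le |v|\,e^{-Ks/2}$, which is bounded for $s\in[0,t]$, and $|dV|_\infty<\infty$ because $V\in BC^1$. Crucially, since $0\le t<T$ the time $T-s$ stays in the compact interval $[T-t,T]$, bounded away from $0$, so we avoid the small-time singularity; hence by Theorem \ref{Feynman-Kac-kernel-1} each $P^{h,V}_{T-s}f$ lies in $BC^1$ with $\sup_{s\in[0,t]}|dP^{h,V}_{T-s}f|_\infty =: C_0<\infty$.

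The main obstacle is that $N$ is a priori only a local martingale, and on a general complete manifold without bounded geometry the Hessian $\nabla dP^{h,V}_{T-s}f$ need not be bounded, so one cannot simply estimate the quadratic variation of $N$. The observation that circumvents this is self-referential: reading the identity (\ref{eqn-lemma3}) at each intermediate time $s\in[0,t]$ in place of $t$ expresses $N_s$ itself as
$$N_s = \V_s(dP^{h,V}_{T-s}f)(W_s(v)) - dP^{h,V}_Tf(v) - \int_0^s \V_r\, dV(W_r(v))\, P^{h,V}_{T-r}f(x_r)\,dr,$$
a difference of three quantities each bounded by the constants collected above, uniformly in $s\in[0,t]$ and in $\omega$. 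Hence $\sup_{s\le t}|N_s|\le C$ almost surely for a deterministic constant $C$. A uniformly bounded continuous local martingale is a genuine martingale: localizing along the exit times $\tau_n$ of geodesic balls $B(x_0,n)$ gives true martingales $N_{s\wedge\tau_n}$, and since $\tau_n\to\infty$ (no explosion) dominated convergence yields $\E[N_t]=\lim_n \E[N_{t\wedge\tau_n}]=0$.

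Finally, the integrand $\V_s\,dV(W_s(v))\,P^{h,V}_{T-s}f(x_s)$ of the remaining term is bounded by $|dV|_\infty|f|_\infty|v|\,e^{-Ks/2}$ on $[0,t]$, so Fubini's theorem applies, and taking expectations in (\ref{eqn-lemma3}) yields exactly (\ref{eqn-five}). I expect no computation beyond assembling these bounds; the only genuine point is the passage from local to true martingale, handled entirely through the self-referential uniform bound on $N_s$ rather than through any estimate on the Hessian.
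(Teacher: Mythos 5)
Your argument is correct and is essentially the paper's own proof: both take expectations in \eqref{eqn-lemma3} and dispose of the stochastic integral term by observing that the identity itself exhibits it as a difference of quantities that are uniformly bounded (using $|W_s(v)|\le |v|e^{-Ks/2}$, $\V_s\le 1$, and the boundedness of $dP^{h,V}_{T-s}f$ from Theorem \ref{Feynman-Kac-kernel-1}, uniform since $T-s\ge T-t>0$), so that the local martingale is a bounded, hence true, martingale with vanishing expectation. Your write-up merely makes explicit the localization along exit times and the Fubini step that the paper leaves implicit.
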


\begin{proof}
We prove that the stochastic integral in the equation (\ref{eqn-lemma3}) is a martingale with vanishing expectation. By Theorem \ref{Feynman-Kac-kernel-1}, $dP_tf$ is bounded. By the boundedness of $|W_t(v)|$, we see that $ \V_t(dP^{h,V}_{T-t} f)(W_t(v))$ is bounded and so is
$ \int_0^t \V_s dV(W_s(v)) P^{h,V}_{T-s}f(x_s)\,ds$. Hence the stochastic integrale is a bounded martingale.
\end{proof}

\begin{theorem}\label{thm:maintheorem} 
Assume that $V \in BC^1$ and $f\in L_\infty$.
 Suppose that $\Ric-2\Hess(h)$ is bounded from below. Then for all $0< t \leq T$ and $v\in T_{x_0}M$ we have
\begin{equation}
\begin{aligned}
dP^{h,V}_Tf(v) =\text{ }& \frac{1}{t}\E \left[ \V_T f(x_T) \int_0^t \< W_s(v),u_s dB_s\>\right]\\
&-\frac{1}{t} \E \left[\V_Tf(x_T)\int_0^t \int_0^r  dV(W_s(v)) \,ds\,dr\right].
\end{aligned}\nonumber
\end{equation}

\end{theorem}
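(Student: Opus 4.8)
The plan is to integrate the identity of Proposition~\ref{lem-four} in its time variable and to read the integrated left-hand side through the martingale representation of Lemma~\ref{lem-one}(a). I would first prove the formula under the stronger hypotheses $V\in BC^1\cap C^{1,\alpha}$ and $f\in L_\infty\cap C^1$, so that Proposition~\ref{lem-four} applies directly: for every $s\in[0,T)$,
$$\E\left[\V_s(dP^{h,V}_{T-s}f)(W_s(v))\right]=d(P^{h,V}_Tf)(v)+\E\left[\int_0^s\V_r\,dV(W_r(v))\,P^{h,V}_{T-r}f(x_r)\,dr\right].$$
Integrating this over $s\in[0,t]$ and interchanging expectation and integral gives
$$\E\left[\int_0^t\V_s(dP^{h,V}_{T-s}f)(W_s(v))\,ds\right]=t\,d(P^{h,V}_Tf)(v)+\int_0^t\E\left[\int_0^s\V_r\,dV(W_r(v))\,P^{h,V}_{T-r}f(x_r)\,dr\right]ds.$$

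The crucial point is that the integrated left-hand side is a stochastic covariation. Set $M_s=\V_sP^{h,V}_{T-s}f(x_s)$; by Lemma~\ref{lem-one}(a), $dM_s=\V_s\,dP^{h,V}_{T-s}f(u_s\,dB_s)$, which is an $L^2$ martingale because $dP^{h,V}_{T-s}f$ is bounded by Theorem~\ref{Feynman-Kac-kernel-1}. Taking $N_s=\int_0^s\<W_r(v),u_r\,dB_r\>$, the orthonormality of the frame $u_se_i$ gives $d\<M,N\>_s=\V_s(dP^{h,V}_{T-s}f)(W_s(v))\,ds$, and integration by parts (with $N_0=0$ and the two It\^o integrals having zero mean, their integrands being products of bounded and $L^2$ factors) yields
$$\E\left[\int_0^t\V_s(dP^{h,V}_{T-s}f)(W_s(v))\,ds\right]=\E\left[\V_tP^{h,V}_{T-t}f(x_t)\int_0^t\<W_s(v),u_s\,dB_s\>\right].$$
Next, the Markov property together with Feynman--Kac gives $\V_sP^{h,V}_{T-s}f(x_s)=\E[\V_Tf(x_T)\mid\F_s]$, since $\V_s\,e^{-\int_s^TV(x_u)\,du}=\V_T$. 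Inserting this identity and using the tower property collapses $\E[M_tN_t]$ to $\E[\V_Tf(x_T)\int_0^t\<W_s(v),u_s\,dB_s\>]$, and applied pointwise in the inner variable it turns the drift term above into $\E[\V_Tf(x_T)\int_0^t\int_0^r dV(W_s(v))\,ds\,dr]$ via Fubini. Dividing through by $t$ produces the asserted formula when $V\in BC^1\cap C^{1,\alpha}$ and $f\in L_\infty\cap C^1$.

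Finally I would remove the extra regularity by approximation. To pass from $f\in L_\infty\cap C^1$ to $f\in L_\infty$ I would take $f_n\in L_\infty\cap C^1$ with $|f_n|_\infty\le|f|_\infty$ and $f_n\to f$ pointwise, so that $f_n(x_T)\to f(x_T)$ almost surely (the law of $x_T$ having a density $p_T^h$) and dominated convergence controls the right-hand side, while $dP^{h,V}_Tf_n(v)\to dP^{h,V}_Tf(v)$ as in the approximation step of Theorem~\ref{Feynman-Kac-kernel-1}. To pass from $V\in BC^1\cap C^{1,\alpha}$ to $V\in BC^1$ I would mollify $V$ into smooth $V_n$ with uniformly bounded $C^1$ norms and $dV_n\to dV$ locally uniformly; then $\V_T^{(n)}\to\V_T$ boundedly, the $dV_n$ term converges by dominated convergence, and the convergence $dP^{h,V_n}_Tf\to dP^{h,V}_Tf$ of the left-hand side follows from the variation of constants formula. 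The main obstacle is precisely this last passage to the limit: one must justify the continuous dependence of $dP^{h,V}_Tf$ on the pair $(V,f)$ in the stated topologies, which requires controlling the stochastic and drift terms uniformly in $n$; here the boundedness of $|W_s(v)|$ (guaranteed by $\rho^h\ge K$) and of $dP^{h,V}_{T-s}f$ supplies the domination needed to interchange the limit with the expectation.
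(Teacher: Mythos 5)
Your proposal is correct and follows essentially the same route as the paper: the paper multiplies the martingale identity of Lemma~\ref{lem-one}(a) by $\int_0^t\<W_s(v),u_s\,dB_s\>$ and uses the It\^o isometry to produce $\E\bigl[\int_0^t\V_r\,d(P^{h,V}_{T-r}f)(W_r(v))\,dr\bigr]$, then substitutes Proposition~\ref{lem-four} and finishes with the Markov property and an approximation in $(V,f)$ — exactly the chain you run in the reverse order via the covariation $\<M,N\>$. The only difference is presentational, together with your more explicit discussion of the final approximation step, which the paper dispatches in one sentence.
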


\begin{proof}
 By Lemma \ref{lem-one} we have
\begin{equation}\label{eqn-seven}
\V_t P^{h,V}_{T-t}f(x_t) = P^{h,V}_T f(x_0) + \int_0^t \V_r d(P^{h,V}_{T-r}f)(u_r\,dB_r).
\end{equation}
Next we multiply the above equation by the $L_2$ martingale $\int_0^t \<u_s dB_s,W_s(v)\>$ and use It\^{o}'s isometry to obtain
\begin{equation*}
\E\left[ \V_t P^{h,V}_{T-t}f(x_t) \int_0^t \<u_s dB_s,W_s(v)\> \right]= \E\left[ \int_0^t \V_r d(P^{h,V}_{T-r} f)(W_r(v))\,dr\right]
\end{equation*}
using the fact that the last term in equation (\ref{eqn-seven}) is an $L_2$ martingale. Suppose that $V$ is $C^2$ and $f\in C^1$, we apply equation (\ref{eqn-five}) to deduce that for any $0< t\leq T$,
\begin{equation}\label{eqn-proponeproof}
\begin{aligned}
dP^{h,V}_Tf(v) =\text{ }& \frac{1}{t}\E \left[ \V_t P^{h,V}_{T-t}f(x_t)\int_0^t \< u_r dB_r,W_r(v)\>\right]\\
&-\frac{1}{t}\E\left[\int_0^t \int_0^r \V_s dV(W_s(v))P^{h,V}_{T-s}f(x_s)\,ds\,dr\right]
\end{aligned}
\end{equation}
and the required identity follows from the Markov property. Next we approximate $V\in BC^1$ by smooth functions and $f$ by smooth functions to conclude.
\end{proof}

 For manifolds whose second fundamental form of an isometric embedding satisfies suitable conditions, the formula can be deduced 
 from that in \cite{Li-thesis, Elworthy-Li} using the techniques of filtering our redundant noise. Here we do not pose conditions on the second fundamental form.

For estimations we need the following lemma, which is \cite[Lemma 6.45]{Stroock2000}.

\begin{lemma}\label{Stroock's-Lemma}
Suppose $(\Omega,\mathcal{F},\mathbb{P})$ is a probability space and $\phi\geq 0$ is a measurable function on $\Omega$ with $\E[\phi]=1$. If $\Psi$ is a measurable function on $\Omega$ such that $\phi \Psi$ is integrable then
\begin{equation}
\E\left[\phi \Psi\right] \leq \E\left[\phi \log \phi\right] + \log \E \left[ \exp (\Psi)\right].\nonumber
\end{equation}
\end{lemma}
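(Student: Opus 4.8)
The plan is to deduce the inequality from the elementary convexity bound
\begin{equation*}
ab \le a\log a - a + e^{b}, \qquad a\ge 0,\ b\in\R,
\end{equation*}
which is the Fenchel--Young inequality for the conjugate convex pair $a\mapsto a\log a - a$ and $b\mapsto e^{b}$ (with the convention $0\log 0 = 0$); it may also be verified directly by fixing $a$ and minimising the right hand side over $b$, the minimum being attained at $b=\log a$. First I would dispose of the trivial case: if $\E[\exp\Psi]=+\infty$ then the right hand side of the claimed inequality is $+\infty$ and there is nothing to prove, so we may assume $Z:=\E[\exp\Psi]<\infty$. Note also that since $x\log x\ge -e^{-1}$ for $x\ge 0$, the quantity $\E[\phi\log\phi]$ is well defined with values in $(-e^{-1},+\infty]$, so the right hand side is unambiguous.

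The key step is to apply the pointwise inequality not to $b=\Psi$ directly, but to the recentred exponent $b=\Psi-\log Z$, so as to exploit the two normalisations $\E[\phi]=1$ and $\E[\exp\Psi]=Z$ simultaneously. Taking $a=\phi$ this gives, pointwise on $\Omega$,
\begin{equation*}
\phi(\Psi-\log Z) \le \phi\log\phi - \phi + \frac{e^{\Psi}}{Z}.
\end{equation*}
Each term on the right is integrable: $\phi\log\phi$ is bounded below, $\phi\Psi$ is integrable by hypothesis, and $e^{\Psi}/Z$ together with $\phi$ are integrable in the reduced case $Z<\infty$. Taking expectations and using $\E[\phi]=1$ and $\E[e^{\Psi}]/Z=1$, the two constant contributions cancel,
\begin{equation*}
\E[\phi\Psi] - \log Z \le \E[\phi\log\phi] - 1 + 1 = \E[\phi\log\phi],
\end{equation*}
which rearranges to the assertion.

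I do not anticipate a genuine obstacle, as this is the standard entropy (Gibbs variational) inequality; the only point requiring care is obtaining the sharp constant $\log Z$ rather than the weaker bound $\E[\phi\Psi]\le\E[\phi\log\phi]+(Z-1)$ that a naive application with $b=\Psi$ produces. Since $Z-1\ge\log Z$, the recentring by $\log Z$ is exactly what sharpens the estimate. Equivalently, one could introduce the probability measure $d\mathbb{Q}=Z^{-1}e^{\Psi}\,d\mathbb{P}$, write $\E[\phi\Psi]-\E[\phi\log\phi]=\log Z + \E[\phi\log((d\mathbb{Q}/d\mathbb{P})/\phi)]$, and apply Jensen's inequality to the concave function $\log$ under the probability measure $\phi\,d\mathbb{P}$, which yields $\E[\phi\log((d\mathbb{Q}/d\mathbb{P})/\phi)]\le \log\mathbb{Q}(\phi>0)\le 0$ and hence the same conclusion; here the only subtlety is that the set $\{\phi=0\}$ is invisible to the measure $\phi\,d\mathbb{P}$, so the Jensen step produces $\mathbb{Q}(\phi>0)\le 1$ rather than $1$.
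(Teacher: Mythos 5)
Your argument is correct and complete: the Fenchel--Young inequality $ab\le a\log a-a+e^{b}$ applied with the recentred exponent $b=\Psi-\log\E[e^{\Psi}]$, together with the two normalisations, gives exactly the stated bound, and you have handled the degenerate case $\E[e^{\Psi}]=+\infty$ and the quasi-integrability of $\phi\log\phi$ properly. The paper itself offers no proof, simply citing Stroock's Lemma 6.45, whose argument is the same standard entropy (Gibbs variational) inequality — in fact both routes you describe, the Young-inequality one and the Jensen-under-$\phi\,d\mathbb{P}$ one, are the textbook proofs — so your write-up supplies precisely what the paper delegates to the reference.
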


\begin{proposition}\label{prop:derivativeestiamte}
Suppose that $\Ric - 2\Hess (h) \geq K$ and $V \in  BC^1$. Then for a non-negative bounded measurable function $f$ with $P_t^{h,V}f(x_0)=1$ we have
\begin{equation}\label{eq:gradbnd}
| \nabla P^{h,V}_t f|_{x_0} \leq \frac{1}{\sqrt{t}}\Big(2C_1(t,K) \E\left[(f(x_t)\V_t\log (f(x_t)\V_t))^+\right] \Big)^{\frac{1}{2}}+tC_2(t,K) |\nabla V|_{\infty} \nonumber
\end{equation}
for all $t>0$ where
\begin{equation*}
C_1(t,K) := \frac{1-e^{-Kt}}{Kt}, \quad C_2(t,K) := \frac{2}{Kt}\left(1+\left(\frac{e^{-Kt/2}-1}{Kt/2}\right)\right).
\end{equation*}
We observe that
$\lim_{t\to 0} C_1(t,K) =1$ and $\lim_{t\to 0} tC_2(t,K) = \frac{4}{K}$.
\end{proposition}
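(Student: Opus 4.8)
The plan is to specialise the first-order formula of Theorem~\ref{thm:maintheorem} to terminal time $T=t$ with intermediate time also equal to $t$. Since $f\in L_\infty$ and $V\in BC^1$, that formula reads, for every $v\in T_{x_0}M$,
\[
dP^{h,V}_tf(v)=\frac1t\E\Big[\V_tf(x_t)\int_0^t\<W_s(v),u_s\,dB_s\>\Big]-\frac1t\E\Big[\V_tf(x_t)\int_0^t\int_0^r dV(W_s(v))\,ds\,dr\Big].
\]
Writing $\phi:=f(x_t)\V_t$, the Feynman--Kac identity $P^{h,V}_tf(x_0)=\E[\phi]$ together with the hypothesis $P^{h,V}_tf(x_0)=1$ shows that $\phi\ge 0$ and $\E[\phi]=1$, so $\phi$ is a probability density and Lemma~\ref{Stroock's-Lemma} is available. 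I would estimate the two expectations separately and then take the supremum over unit vectors $v$.

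For the stochastic integral term, set $M^v_t:=\int_0^t\<W_s(v),u_s\,dB_s\>$ and $N:=M^v_t/t$. From the damped transport equation~\eqref{damped} and $\Ric-2\Hess h\ge K$ one gets $\frac{d}{ds}|W_s(v)|^2\le -K|W_s(v)|^2$, hence $|W_s(v)|\le|v|e^{-Ks/2}$ and $\<M^v\>_t=\int_0^t|W_s(v)|^2\,ds\le|v|^2\frac{1-e^{-Kt}}{K}=|v|^2\,t\,C_1(t,K)$. Applying Lemma~\ref{Stroock's-Lemma} with $\Psi=\lambda N$ gives $\lambda\,\E[\phi N]\le\E[\phi\log\phi]+\log\E[e^{\lambda N}]$, and I would bound the last term by the exponential-martingale estimate $\E[e^{\lambda N}]\le\exp\big(\frac{\lambda^2}{2t^2}\|\<M^v\>_t\|_\infty\big)$. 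Optimising over $\lambda>0$ collapses the two contributions into $\E[\phi N]\le\frac{|v|}{\sqrt t}\sqrt{2\,C_1(t,K)\,\E[\phi\log\phi]}$. Since $\E[\phi\log\phi]\ge 0$ by Jensen and $\phi\log\phi\le(\phi\log\phi)^+$, this is dominated by $\frac{|v|}{\sqrt t}\big(2C_1(t,K)\,\E[(\phi\log\phi)^+]\big)^{1/2}$; running the same argument for $-v$ then controls $|\E[\phi N]|$.

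For the drift term I would use $|dV(W_s(v))|\le|\nabla V|_\infty|W_s(v)|\le|\nabla V|_\infty|v|e^{-Ks/2}$ together with $\E[\phi]=1$, so that its modulus is at most $|\nabla V|_\infty|v|\cdot\frac1t\int_0^t\int_0^r e^{-Ks/2}\,ds\,dr$. The elementary computation $\frac1t\int_0^t\int_0^r e^{-Ks/2}\,ds\,dr=\frac{2}{K}\big(1+\frac{e^{-Kt/2}-1}{Kt/2}\big)=t\,C_2(t,K)$ reproduces exactly the second constant. Adding the two bounds and taking $\sup_{|v|=1}|dP^{h,V}_tf(v)|=|\nabla P^{h,V}_tf|_{x_0}$ yields the asserted inequality, and the small-time behaviour of $C_1$ and $C_2$ is read off by Taylor expansion.

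The step I expect to be the crux is the stochastic-integral term: one must marry the entropic inequality of Lemma~\ref{Stroock's-Lemma} with a Gaussian exponential bound, which in turn requires verifying that $\exp\big(\frac{\lambda}{t}M^v_t-\frac{\lambda^2}{2t^2}\<M^v\>_t\big)$ is a genuine, not merely local, martingale. This is precisely where the one-sided curvature hypothesis enters, through the boundedness of $\<M^v\>_t$ forced by $\Ric-2\Hess h\ge K$; the remaining delicate point is to carry out the $\lambda$-optimisation so that $C_1(t,K)$ and $\E[(\phi\log\phi)^+]$ emerge with the correct exponent $\frac12$.
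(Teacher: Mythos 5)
Your proof is correct and follows essentially the same route as the paper: Theorem~\ref{thm:maintheorem} combined with Lemma~\ref{Stroock's-Lemma}, the bound $|W_s(v)|\le |v|e^{-Ks/2}$ from $\Ric-2\Hess h\ge K$, the Gaussian exponential-martingale estimate, and optimisation over the auxiliary scalar. The only (immaterial) deviations are that you bound the $dV$-term separately via $\E[\phi]=1$ whereas the paper keeps it inside the entropic inequality, and that your explicit factor $\tfrac{\lambda^2}{2t^2}\|\langle M^v\rangle_t\|_\infty$ is in fact the correct form needed to produce the stated constant $2C_1(t,K)$ after optimisation.
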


\begin{proof}
For $\gamma \in \mathbb{R}$ set
\begin{equation}
\phi := f(x_t)\V_t,\quad \psi_t := \gamma \int_0^t \langle W_s(v_0),u_s dB_s-(t-s)\nabla V(x_s)\,ds\rangle\nonumber
\end{equation}
to see, by Theorem \ref{thm:maintheorem}, Fubini's theorem and Lemma \ref{Stroock's-Lemma}, that for $v_0\in T_{x_0}M$,
\begin{equation}
\begin{aligned}
\gamma t d(P^{h,V}_t f)(v_0) & \leq \E\left[\phi\log \phi\right]+\log \E\left[\exp\left[\gamma \int_0^t \langle W_s(v_0),u_s dB_s-(t-s)\nabla V(x_s)\,ds\rangle\right]\right].
\end{aligned}\nonumber
\end{equation}
Since $\frac{D}{ds}W_s= -\frac{1}{2}{\Ric}^\sharp (W_s)+\nabla^2 h(W_s, \cdot)^\sharp$ we have $|W_s| \leq e^{-\frac{K s}{2}}$ so, 
\begin{equation*}
\begin{aligned}
& \text{ }\log \mathbb{E}\left[\exp\left[\gamma \int_0^t \langle W_s(v_0),u_s dB_s-(t-s)\nabla V(x_s)ds\rangle\right]\right]\\
\leq & \text{ }\log \mathbb{E}\left[\exp\left(\gamma\int_0^t \langle W_s(v_0),u_s dB_s\rangle +|\;\gamma|  \left|\int_0^t (t-s)\langle -\nabla V(\xi_s),W_s(v_0)\rangle ds \right|\right)\right]\\
\leq & \text{ } {\gamma^2 } \int_0^t e^{-Ks} |v_0|ds +  |\gamma |  |\nabla V|_{\infty}\int_0^t (t-s) e^{-\frac{Ks}{2}}  |v_0| ds \\
\leq & \text{ }\gamma^2 tC_1(t, K)|v_0| + |\gamma| |\nabla V|_{\infty} {t^2}C_2(t, K)|v_0|.
\end{aligned}
\end{equation*}
Thus we obtain
\begin{equation*}
\gamma t d(P^{h,V}_t f)(v_0) \leq  \E\left[(\phi\log \phi)^+\right]
 + \gamma^2 tC_1(t,K) |v_0|+ |\gamma| |\nabla V|_{\infty} {t^2}C_2(t, K)
\end{equation*}
which after minimising over $\gamma$ yields
\begin{equation}
t| \nabla P^{h,V}_t f|_{x_0}\leq \left(2tC_1(t,K) \E\left[(\phi\log \phi)^+\right] \right)^{\frac{1}{2}}+t^2|\nabla V|_{\infty} C_2(t,K),\nonumber
\end{equation}
as claimed.
\end{proof}

If $f\ge 0$ and $P^{h,V}_tf(x_0)>0$ we define 
\begin{equation*}
{\mathcal H}_t(f,x_0):= \mathbb{E}\left[
\frac{f(x_t)\mathbb{V}_t}{P^{h,V}_tf(x_0)} \left( \log\left( \frac{f(x_t)\mathbb{V}_t}{P^{h,V}_tf(x_0)} \right)\right)^+\right],
\end{equation*}
and replacing the non-negative function $f$ in Proposition \ref{prop:derivativeestiamte} by $\f f {P^{h,V}_tf(x_0)}$,
 we deduce the following corollary.

\begin{corollary}\label{cor:bnds}
Suppose that $\Ric - 2\Hess (h) \geq K$ and $V \in  BC^1$.  Then for any non-identically zero bounded measurable non-negative function $f$ we have
\begin{equation}
| \nabla \log P^{h,V}_t f |_{x_0} \leq \frac{1}{\sqrt{t}}\Big(2C_1(t,K) {\mathcal H}_t(f,x_0) \Big)^{\frac{1}{2}}+tC_2(t,K) |\nabla V|_{\infty} , \quad t>0.\nonumber
\end{equation}
and 
\small
$$
| \nabla \log p^{h,V}_t |_{x_0} \leq \frac{1}{\sqrt{t}}\sqrt{2C_1(t,K) }\left(\sup_{y\in M} \left(\log  \frac{p^h_t(y,y_0)}{p^h_{2t}(x_0,y_0)}+ 2t(\sup V-\inf V)    \right)^+\right)^{\frac{1}{2}}+t\,C_2(t,K) \,|\nabla V|_{\infty}.$$
\normalsize
\end{corollary}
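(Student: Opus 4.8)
The plan is to establish the two displayed inequalities of Corollary~\ref{cor:bnds} in succession: the first is a direct normalisation of Proposition~\ref{prop:derivativeestiamte}, and the second follows by applying the first to the kernel itself, using the semigroup property and crude two-sided bounds on the Feynman--Kac weight $\V_t$ to evaluate the resulting entropy.

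For the bound on $|\nabla\log P^{h,V}_tf|_{x_0}$, I would apply Proposition~\ref{prop:derivativeestiamte} not to $f$ but to its normalisation $g:=f/P^{h,V}_tf(x_0)$. This is legitimate: since $f\ge0$ is bounded, measurable and not identically zero, the strict positivity of the kernel on the connected manifold $M$ gives $P^{h,V}_tf(x_0)>0$, so $g$ is a non-negative bounded measurable function with $P^{h,V}_tg(x_0)=1$. Because $P^{h,V}_tf(x_0)$ is a constant, $\nabla P^{h,V}_tg|_{x_0}=P^{h,V}_tf(x_0)^{-1}\nabla P^{h,V}_tf|_{x_0}=\nabla\log P^{h,V}_tf|_{x_0}$, while $g(x_t)\V_t=f(x_t)\V_t/P^{h,V}_tf(x_0)$ turns the entropy term $\E[(g(x_t)\V_t\log(g(x_t)\V_t))^+]$ into exactly $\mathcal{H}_t(f,x_0)$. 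Substituting these two identities into the conclusion of Proposition~\ref{prop:derivativeestiamte} yields the first inequality.

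For the kernel estimate I would fix the pole $y_0$ and apply the inequality just proved to $f=p^{h,V}_t(\cdot,y_0)$, which is non-negative, measurable, not identically zero and (for $t>0$) bounded. The semigroup identity $P^{h,V}_tp^{h,V}_t(\cdot,y_0)(x_0)=p^{h,V}_{2t}(x_0,y_0)$ then gives $\nabla\log P^{h,V}_tf|_{x_0}=\nabla\log p^{h,V}_{2t}(\cdot,y_0)|_{x_0}$, so the elapsed time is $2t$ and it only remains to control $\mathcal{H}_t(f,x_0)$. Here I would use the pathwise bound $e^{-t\sup V}\le\V_t\le e^{-t\inf V}$, which passes to the kernels as $e^{-t\sup V}p^h_t\le p^{h,V}_t\le e^{-t\inf V}p^h_t$; applying the upper bound to the numerator $f(x_t)\V_t=p^{h,V}_t(x_t,y_0)\V_t$ and the lower bound to the denominator $p^{h,V}_{2t}(x_0,y_0)$ shows that, pointwise along the path,
\[
\frac{f(x_t)\V_t}{P^{h,V}_tf(x_0)}\le e^{2t(\sup V-\inf V)}\,\frac{p^h_t(x_t,y_0)}{p^h_{2t}(x_0,y_0)}.
\]
Taking the logarithm, then the positive part, and then the supremum over the endpoint $x_t\in M$ bounds $(\log(\cdots))^+$ by the deterministic constant $\sup_{y\in M}\big(\log(p^h_t(y,y_0)/p^h_{2t}(x_0,y_0))+2t(\sup V-\inf V)\big)^+$. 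Since the density $f(x_t)\V_t/P^{h,V}_tf(x_0)$ has expectation $1$, this constant also bounds $\mathcal{H}_t(f,x_0)=\E[\text{density}\cdot(\log\text{density})^+]$, and feeding it into the first inequality gives the stated estimate (read with elapsed time $2t$ on the left).

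The step I expect to be the main obstacle is the entropy estimate rather than the normalisation. The difficulty is that $\V_t$ is path-dependent, so $\mathcal{H}_t$ cannot be reduced to an integral against the endpoint law; the way around this is precisely the pointwise domination above, which replaces the path functional by a deterministic supremum before taking expectations. A secondary point is verifying that $p^{h,V}_t(\cdot,y_0)$ is bounded so that the first inequality genuinely applies under the sole hypothesis $\Ric-2\Hess h\ge K$; if a uniform on-diagonal bound is not directly available I would run the argument with an approximating sequence $f_\varepsilon\to p^{h,V}_t(\cdot,y_0)$ and pass to the limit in both sides.
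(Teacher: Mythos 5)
Your argument is correct and follows essentially the paper's own route: the first bound is obtained exactly as in the paper by applying Proposition~\ref{prop:derivativeestiamte} to $f/P^{h,V}_tf(x_0)$, and the kernel bound by taking $f=p^{h,V}_t(\cdot,y_0)$, using $P^{h,V}_tf(x_0)=p^{h,V}_{2t}(x_0,y_0)$, and dominating $\mathcal{H}_t(f,x_0)$ by a deterministic supremum involving the ratio $p^h_t(y,y_0)/p^h_{2t}(x_0,y_0)$. The only cosmetic difference is that the paper removes the potential via the bridge representation $p^{h,V}_t=p^h_t\,\E\big[e^{-\int_0^tV(b_s)\,ds}\big]$ after conditioning on the endpoint, whereas you use the pathwise bound $e^{-t\sup V}\le \V_t\le e^{-t\inf V}$ directly; both yield the same constant $2t(\sup V-\inf V)$, and your reading of the left-hand side as the kernel at elapsed time $2t$ matches the paper's intent.
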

Indeed, choosing $f(\cdot) = p^{h,V}_t(\cdot,y_0)$, the Feynman-Kac kernel associated to  $P^{h,V}_t$, we have
\begin{equation}\label{ineq-hbndestproof}
\begin{aligned}
{\mathcal H} _t(f,x_0) &\leq \sup_{y\in M} \left(\log  \frac{f(y)
  \E \left[e^{-\int_0^t V(x_s) ds}|x_t=y \right]}   {P^{h,V}_tf(x_0)} \right)^+ \mathbb{E}\left[\frac{f(x_t)\mathbb{V}_t}{P^{h,V}_tf(x_0)} \right]\\
&\leq \sup_{y\in M} \left( \log  \frac{p^{h,V}_t(y,y_0)e^{- t\inf V}}{p^{h,V}_{2t}(x_0,y_0)}\right)^+\\
&= \sup_{y\in M}\left( \log  \frac{p^h_t(y,y_0) \mathbb{E}\left[ e^{-\int_0^t V(b^{t, y,y_0}_s) ds}\right]e^{- t\inf V}}
{p^h_{2t}(x_0,y_0) \mathbb{E}\left[ e^{-\int_0^{2t} V(b^{t, x_0, y_0}_s) ds}\right]}\right)^+
\\&\leq \sup_{y\in M}\left( \log  \frac{p^h_t(y,y_0)}{p^h_{2t}(x_0,y_0)}+ 2t(\sup V-\inf V)\right)^+\nonumber
\end{aligned}
\end{equation}
where $b_s^{t,x,y}$ is the process obtained by conditioning the $h$-Brownian motion by $x_t=y$ and $x_0=x$.

Given suitable heat kernel upper and lower bounds or more generally a Harnack inequality for $p_t^h$,
we would have an estimate on the logarithmic derivative of $p^{h,V}_t$. Heat kernel bounds and Harnack inequalities are naturally related. For the first see \cite{CLY,Li-Yau,Cheeger-Gromov-Taylor,LDUP, Davies-Gaussian-bound, Norris97-long}, \cite[Corollary 3.1]{Li-Yau}, and \cite[Theorems 7.4, 7.5, 7.9]{Grigoryan99}. If the Sobolev inequality $|f|_{2n/(n-2)}^2 \le C \int_M |\nabla f|^2\,dx$ holds for $f\in C_K^\infty$ then the heat kernel satisfies the on-diagonal estimate $p_t(x,x)\le C t^{-\f n 2}$ (leading to off-diagonal estimates).  In fact Varopoulos proved that the Sobolev inequality is also necessary for the on-diagonal upper bound, \cite[Lemma 5.1, Theorem 6.1]{Grigoryan91}. See also \cite{{Grigoryan99},Saloff-Coste} for a clear account on the relation between various functional inequalities and the on diagonal Gaussian upper bounds of the type $\vol(B(x, \sqrt t))^{-1} e^{-\f {d(x,y)^2} t}$.
For the case $h=0$ and $\Ric \geq - K$, $K\geq 0$, a global Harnack inequality exists,  \cite[Theorem 2.2]{Li-Yau}, for  positive solutions  of the heat equation. For example, \cite[Corollary~2]{Bakry-Qian}, 
\begin{equation}
\begin{aligned}
\frac{f_t(x)}{f_{t+s}(y)} \leq & \left(\frac{t+s}{t}\right)^\frac{n}{2}\exp\left[ \frac{(d(x,y)+ \sqrt{nK}s)^2}{4s} \right. \\
&\quad\quad\quad\quad\quad\quad\quad\quad\left.+ \frac{\sqrt{nK}}{2} \min\lbrace (\sqrt{2}-1)d(x,y),\frac{\sqrt{nK}}{2}s\rbrace\right].
\end{aligned}\nonumber
\end{equation}

According to \cite[Lemma 10.76]{Stroock2000}, there exists a constant $c>0$, depending only on the dimension, a lower bound on Ricci curvature and $T>0$, such that
\begin{equation}\label{eq:ratiobnd}
\frac{p_t(y,y_0)}{p_{2t}(x_0,y_0)} \leq c \exp\left[\frac{d^2(x_0,y_0)}{2t}\right]
\end{equation}
for all $t \in [0,T]$ and $y \in M$. This is obtained in the following way. First, one writes
\begin{equation}
\frac{p_t(y,y_0)}{p_{2t}(x_0,y_0)} = \frac{p_t(y,y_0)}{p_{t}(y_0,y_0)} \frac{p_t(y_0,y_0)}{p_{2t}(x_0,y_0)}.
\end{equation}
One then uses the Harnack inequality to bound the second ratio. The first can be dealt with using standard heat kernel bounds, such as those of \cite{Li-Yau}, and Bishop's volume comparison theorem. For the details, see the proof of \cite[Lemma 10.76]{Stroock2000}. Having obtained \eqref{eq:ratiobnd}, the result then follows from the second part of Corollary \ref{cor:bnds}.
From this  and the last Corollary can be deduced the following 
\begin{equation}
|\nabla \log p^{V}_{t}(\cdot,y_0)(x_0)| \leq C(T)\left(\frac{1}{t}+\frac{d^2(x_0,y_0)}{t^2}+|V|_{\infty}\right)^{\frac{1}{2}} + C(T)t|\nabla V|_\infty
\end{equation}
for all $t \in \left(0,T \right]$ and $x_0,y_0 \in M$.
When $V=0$, this recovers the estimates in  \cite{Sheu}, see also \cite{Malliavin-Stroock, HsuEstimates, Stroock-Turetsky-98,Engoulatov, XDLi-Hamilton}. 
For brevity we do not write down the estimate involving $h$ it is however worth noticing that  gradient formula for $P_t^hf$ would lead to a parabolic Harnack inequality for $p_t^h$, see \cite{Arnaudon-Thalmaier-Wang} for such an estimate.

\section{On manifolds with a pole}\label{section3}

Let $n\ge 2$. Assume that $y_0$ is a pole for $M$. That is, we assume that the exponential map $\exp_{y_0}$ is a diffeomorphism between $T_{y_0}M$ and $M$. If the Jacobian $J:M \rightarrow \mathbb{R}$ defined~by $$J(y)\equiv J_{y_0}(y)= | \det D_{\exp_\pole^{-1}(y)} \exp_\pole|$$ is non-singular, then we denote by $J^{-1}$ the reciprocal of $J$ and for $t>0$ define
\begin{equation}
\Phi(y)=\frac{1}{2}J^{\frac{1}{2}}(y)\Delta J^{-\frac{1}{2}}(y),
\qquad k_t(y) = (2\pi t)^{-\frac{n}{2}} e^{-\frac{r^2(y)}{2t}} J^{-\frac{1}{2}}(y)\nonumber
\end{equation}
for $y \in M$, where $r$ denotes the distance to the pole $y_0$. The function $J^{\f 1 2}$ is called Ruse's invariant (see for example Walker \cite{Walker}).  The objective is to obtain probabilistic representation for the kernel $p^V$ and its derivatives, involving the distance function, $J$ and $\Phi$.
On the standard $n$-dimensional hyperbolic space, the heat kernel $p_t(x,y)$ depends on $x$ and $y$ through $r =d(x,y)$ and is given by iterative formulas:
$$\begin{aligned}p_t(x,y)=&C(m)\f 1 {\sqrt t }\left(\f 1{ \sinh r}\f {\partial}{\partial r}\right) ^m e^{-m^2 t-\f {r ^2}{4t}}, \qquad\quad n=2m+1,\\
p_t(x,y)=&C(m)t^{-\f 3 2}   e^{- \f {(2m+1)^2 }{4}t}
\left(\f 1{ \sinh r}\f {\partial}{\partial r}\right) ^m\int_\rho^\infty \f {s e^{-\f {s^2} {4t} }}{\cosh s -\cosh r}\,ds, \quad n=2m+2.
\end{aligned}$$
 If its sectional curvature is $-R^{-2}$, then  \cite{Elworthy-book},
\begin{eqnarray*}
J &=& \left(\frac{R}{r}\sinh \frac{r}{R}\right)^{n-1}\\
\Phi &=& -\frac{(n-1)^2}{8R^2}+\frac{(n-1)(n-3)}{8}\left(r^{-2}-\left(R^2 \sinh^2\left(\frac{r}{R}\right)\right)^{-1}\right).
\end{eqnarray*}
In particular $\Phi$ is bounded above. 

If $M$ is a model space, i.e. $M$ is a manifold with a pole $p$ such that for every linear isometry $\phi: T_pM\to T_pM$ there exists an isometry $\Phi: M\to M$ such that $\phi(p)=p$ and $d\Phi_p=\phi$.
In the geodesic polar coordinates, $(r,\theta)\in (0, \infty)\times S^{n-1}$, the pull back metric in
$ (0, \infty)\times S^{n-1}$ can be written as $dr^2+f(r)^2 d\theta^2$. The function $f$ is $C^\infty$ and satisfies $f(0)=0$, $f'(0)=1$, $f(r)>0$ for $r>0$ and $f{''}(r)=-R(r)f(r)$ where $R(r)$ is the sectional curvature in a plane containing the radial direction $\partial_r$ at a point $x$ with $d(x,p)=r$.
Then  $\log J_p(x)=(n-1) \log \f {f(r)}r$. If $R(r)=R$, a constant, then
$f(r)=\f {\sinh (\sqrt R r)}{\sqrt R }$ and $\log f$ has bounded derivatives of all order. In general,
$$\begin{aligned} |\nabla \log J|&=(n-1) \left| (\log f)'(r)-\f 1r\right|, \qquad  \Delta r=(n-1) (\log f)'(r),\\
\Delta (\log J)&=(n-1)^2(\log f)'(r) \left ( (\log f)'(r)-\f {1}r \right) +(n-1) \left( (\log f)''(r)+\f {1} {r^2}\right),\\
\Phi&=\f 1 4 (n-1) \left[ \f{n-2} {r^2}    -\f {n-1} r(\log f)'(r) -(\log f)''(r) \right].
\end{aligned}$$

\subsection{Girsanov transform and zeroth order formula}\label{prelim}
Let $T$ be a positive number and $x_0, y_0 \in M$.  The semi-classical bridge process, between $x_0$ and $y_0$ in time $T$,  is the diffusion process starting at $x_0$ with generator  $\frac{1}{2}\Delta + \nabla \log k_{T-s}$ where
$k_t(x_0, y_0):=(2\pi t)^{-\f n 2}e^{-\f {d^2(x_0, y_0)}{2t}}J^{-\f 12}(x_0)$.
If $A$ is a vector field on $M$, we use the tilde sign over $A$ to denote its horizontal lift to the frame bundle. Following the notation in (\ref{horizontal}), we consider the equation on $OM$:
\begin{equation}\label{horizontal-2}
d\tilde u_t = \sum_{i=1}^n H_i(\tilde u_t) \circ dB^i_t+\h_{\tilde u_t} \left( {\nabla h}\right)dt+\h_{\tilde u_t} \left(\nabla \log k_{T-t}\right)dt, \quad t<T,
\end{equation}
where the initial value is a frame $u_0$ at $x_0\in M$ and $\h_u$ denotes the horizontal lift map from $T_\pi(u)M$ to $T_uOM$. 
Then  $\tilde x_t:=\pi(\tilde u_t)$ is a semi-classical bridge. 

Since $| \nabla r | = 1$ away from $y_0$, It\^{o}'s formula implies for $0\leq t<\term$ that
\begin{equation*}
r(\tilde{x}_t)-r(x_0)=
\beta_t +\int_0^t{1\over 2} \Delta r(\tilde{x}_s) ds-
\int_0^t  {r(\tilde{x}_s)  \over T-s}ds
 -{1\over 2} \int_0^t dr\left(  \nabla \log J(\tilde{x}_s) \right)ds
\end{equation*}
where $\beta_t$ is a standard one-dimensional Brownian motion. It is clear from this and the formula
\begin{equation}\label{laplacian-r}
\Delta r={n-1 \over r} +dr(\nabla \log J)
\end{equation}
that $r(\tilde{x}_t)$ is distributed as a Bessel bridge, starting at $r(x_0)$ and ending at $0$ at time $T$, from which it follows that $\lim_{t\uparrow T}\tilde{x}_t = y_0$, almost surely.

We follow a beautiful idea of Watling \cite{Watling1}
and use a modification of the construction by Elworthy-Truman to define the semi-classical Riemannian bridge
for a given vector field $Z$.   For $\gamma_x: [0,1]\to M$  the unique geodesic from $x$ to $y_0$, 
$$S(x)=\int_0^1 \<\dot \gamma_x(u), Z(\gamma_x(u))\>du,$$ 
and naturally $S(y_0)=0$. If $Z=\nabla f$, then $S(x)=f(y_0)-f(x)$ and $\nabla S+ \nabla f=0$.

\begin{lemma}
\label{lemma3.1}
If $\tilde x_t$ is a $\frac{1}{2}\Delta +Z+\nabla S+ \nabla (\log (k_{T-s}))$ diffusion (called the semi-classical Riemannian bridge), then  $d(y_0,\tilde x_t)$ is  the $n$-dimensional Bessel bridge process. 
\end{lemma}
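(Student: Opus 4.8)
The plan is to apply It\^o's formula to the radial process $r(\tilde x_t)=d(y_0,\tilde x_t)$, exactly as was done above for the pure semi-classical bridge, and to show that the extra drift $Z+\nabla S$ carried by the generator of the semi-classical Riemannian bridge makes no contribution to the radial dynamics. Since $|\nabla r|=1$ away from $y_0$, the martingale part of $r(\tilde x_t)$ is again a standard one-dimensional Brownian motion $\beta_t$ (by L\'evy's characterisation, as its quadratic variation is $|\nabla r|^2\,dt=dt$), while the drift is obtained by letting the generator act on $r$. Collecting the terms: $\tfrac12\Delta$ contributes $\tfrac12\Delta r$; the term $\nabla\log k_{T-s}$ contributes $dr(\nabla\log k_{T-s})=-\tfrac{r}{T-s}-\tfrac12\,dr(\nabla\log J)$, read off from $\log k_t=\mathrm{const}-\tfrac{r^2}{2t}-\tfrac12\log J$; and the new piece contributes $dr(Z+\nabla S)=\langle\nabla r,Z\rangle+dr(\nabla S)$.

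The heart of the matter is to show that this last quantity vanishes, i.e. that $dr(\nabla S)=-\langle\nabla r,Z\rangle$, so that $\nabla S$ cancels precisely the radial component of $Z$. To see this I would work in geodesic polar coordinates about $y_0$: write $x=\exp_{y_0}(r\theta)$ with $\theta\in S^{n-1}$ fixed, and let $\sigma_\theta(\rho)=\exp_{y_0}(\rho\theta)$ be the unit-speed radial geodesic, so that $\dot\sigma_\theta(\rho)=\nabla r|_{\sigma_\theta(\rho)}$. The minimal geodesic $\gamma_x$ from $x$ to $y_0$ is then $u\mapsto\sigma_\theta(r(1-u))$, whence $\dot\gamma_x(u)=-r\,\nabla r|_{\sigma_\theta(r(1-u))}$. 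Substituting into the definition of $S$ and changing variables $\rho=r(1-u)$ gives the closed form
\begin{equation*}
S\big(\exp_{y_0}(r\theta)\big)=-\int_0^r \langle\nabla r,Z\rangle\big|_{\sigma_\theta(\rho)}\,d\rho,
\end{equation*}
which displays $S$ as (minus) the path integral of the radial part of $Z$ along the radial geodesic. Since moving in $r$ with $\theta$ fixed is moving along $\nabla r$, differentiating in $r$ yields $dr(\nabla S)|_x=\tfrac{d}{dr}S(\exp_{y_0}(r\theta))=-\langle\nabla r,Z\rangle|_x$, as required. As a consistency check, when $Z=\nabla f$ this recovers $S(x)=f(y_0)-f(x)$ and the full cancellation $Z+\nabla S=0$ noted above.

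The main obstacle is precisely this radial-derivative computation: one must keep careful track of the orientation and the $[0,1]$-parametrisation of $\gamma_x$, and justify differentiating the integral defining $S$ under the integral sign (legitimate since $y_0$ is a pole, so $\exp_{y_0}$ is a diffeomorphism and the integrand depends smoothly on $r$ for $Z\in C^1$, making $S$ differentiable off $y_0$). Once $dr(Z+\nabla S)=0$ is established, the remaining drift terms combine exactly as in the pure semi-classical case: using $\Delta r=\tfrac{n-1}{r}+dr(\nabla\log J)$ from \eqref{laplacian-r}, the two $dr(\nabla\log J)$ contributions cancel and we are left with
\begin{equation*}
r(\tilde x_t)-r(x_0)=\beta_t+\int_0^t\left(\frac{n-1}{2\,r(\tilde x_s)}-\frac{r(\tilde x_s)}{T-s}\right)ds,\qquad 0\le t<T.
\end{equation*}
This is the stochastic differential equation of the $n$-dimensional Bessel bridge from $r(x_0)$ to $0$ on $[0,T)$, so $d(y_0,\tilde x_t)$ is the claimed Bessel bridge and, as before, $\lim_{t\uparrow T}\tilde x_t=y_0$ almost surely.
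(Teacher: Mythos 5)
Your proof is correct and follows essentially the same route as the paper: apply It\^o's formula to $r(\tilde x_t)$ and show that the extra drift contributes $\langle \nabla r, Z+\nabla S\rangle=0$, so that only the Bessel-bridge drift $\tfrac{n-1}{2r}-\tfrac{r}{T-s}$ survives. The only (cosmetic) difference is that you establish the radial cancellation by writing $S$ explicitly as a line integral in geodesic polar coordinates and differentiating in $r$, whereas the paper differentiates $t\mapsto S(\gamma_x(t))$ at $t=0$ using the reparametrisation identity $S(\gamma_x(t))=\int_t^1\langle\dot\gamma_x(u),Z(\gamma_x(u))\rangle\,du$; both arguments are the same computation in different clothing.
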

\begin{proof}
Setting $r_t:=d(y_0, \tilde x_t)$, we have
$$r_t-r_0=
\beta_t +\f 12 \int_0^t \f {n-2}{r_s} ds-
\int_0^t  {r_s  \over T-s}ds+\int_0^t \< \nabla r, Z+\nabla S\>_{\tilde x_s} ds.$$
Then the last term in the identity vanishes.
Indeed,
$S(\gamma_x(t))=\int_t^1 \<\dot \gamma_x(u), Z(\gamma_x(u))\>du$ so
$\f d {dt}|_{t=0} S(\gamma_x(t))=-\<\dot \gamma_x(0), Z(x)\>$ on one hand,
 and $\f d {dt}|_{t=0} S(\gamma_x(t))=\<\nabla S(x), \dot \gamma_x(0)\>$  on the other hand. Finally $\nabla r(x)=\dot \gamma_x(0)$,
 and so $\<\nabla S+Z,\nabla r\>=0$.
\end{proof}

The following basic lemma will be used repeatedly, where $h\in C^2(M; \R)$, $Z$ is a $C^1$ vector field, and $t \in \left[0,\term\right)$.
We now consider a Brownian motion with drift $\nabla h+Z$. Although by the earlier discussion the symmetric drift can be absorbed into $Z$,
 it is convenient to treat $\nabla h$ differently from $Z$, the former is not involved in the definition of the semi-classical Riemannian bridge.
Both $\nabla h$ and $Z$ appear in the formulation of the Lemma (Lemma \ref{lemma-Girsanov}) below.
 Recall that $$\Phi=\f 12 J^{\f 12}\Delta (J^{-\f 12})=\f 12 |\nabla \log J|^2-\f 1 4 \Delta(\log J).$$
Let us define $$\Phi^h= \Phi -\f 1 2\Delta h - \f 1 2 |\nabla h|^2, \quad \Psi=
 \f 12 |\nabla S|^2+\f 12 \Delta S+\<Z, \nabla S-\nabla h\>.$$

\begin{lemma}\label{lemma-Girsanov}
Suppose that the $\frac{1}{2}\Delta^h +Z$ diffusion is complete.
 Then the probability distributions of the $\frac{1}{2}\Delta^h +Z$ diffusion and of the semi-classical Riemannian bridge, c.f. Lemma \ref{lemma3.1}, are mutually absolutely continuous on $\F_t$. Furthermore for any bounded measurable function $F$ on the path space $C([0,t];M)$, $\E F(u_\cdot)=\E [F(\tilde u_\cdot)M_t]$ where
 Radon-Nikodym derivative $M_t$, where $t<T$, is given by
 \begin{equation}
M_t:=\f {k_\term (x_0)e^{(S-h)(x_0)} } {k_{\term-\tm} (\tilde{x}_\tm) e^{(S-h)(\tilde x_t)}}\exp\left[  \int_0^\tm (\Phi^{h}+\Psi)(\tilde{x}_\stm) \,d\stm\right].
\end{equation}
In particular  $M_t$ is a martingale.
\end{lemma}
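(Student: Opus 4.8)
The plan is to realise both diffusions as solutions of stochastic differential equations on $OM$ driven by the \emph{same} Brownian motion $B$, and to pass between their laws by a Girsanov transform. Let $\tilde u_t$ be the horizontal lift of the semi-classical Riemannian bridge, so that $\tilde x_t=\pi(\tilde u_t)$ has drift $Z+\nabla S+\nabla\log k_{T-s}$ (Lemma~\ref{lemma3.1}), while the $\frac12\Delta^h+Z$ diffusion has drift $\nabla h+Z$. Their drift fields differ by $b_s=\nabla h-\nabla S-\nabla\log k_{T-s}=-\nabla\bigl((S-h)+\log k_{T-s}\bigr)$. By Girsanov's theorem the law of the $\frac12\Delta^h+Z$ diffusion on $\F_t$, $t<T$, is obtained from that of the bridge through the Dol\'eans--Dade exponential $M_t=\exp\bigl(\int_0^t\langle\tilde u_s^{-1}b_s,dB_s\rangle-\frac12\int_0^t|b_s(\tilde x_s)|^2\,ds\bigr)$, so that $\E F(u_\cdot)=\E[F(\tilde u_\cdot)M_t]$ as soon as this exponential is a genuine martingale. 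Two things then remain: (i) to identify it with the stated closed form for $M_t$, and (ii) to prove it is a true, and not merely local, martingale.

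For (i) I would apply It\^o's formula along the bridge to $\Theta_t:=-\log k_{T-t}(\tilde x_t)-(S-h)(\tilde x_t)$. The explicit time derivative of $\log k_{T-t}$ is handled by the parabolic identity $\partial_t k_t=\frac12\Delta k_t-\Phi k_t$, which is equivalent to the defining relation $\Delta J^{-1/2}=2\Phi J^{-1/2}$ behind \eqref{phi} and is verified using \eqref{laplacian-r} together with $|\nabla r|=1$. The only possibly singular contribution is $\langle Z+\nabla S,\nabla\log k_{T-t}\rangle$, whose radial part $-\frac{r}{T-t}\langle Z+\nabla S,\nabla r\rangle$ vanishes by the orthogonality $\langle Z+\nabla S,\nabla r\rangle=0$ proved in Lemma~\ref{lemma3.1}, leaving the bounded term $-\frac12\langle Z+\nabla S,\nabla\log J\rangle$. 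Collecting the pieces, the martingale part of $d\bigl(\log k_T(x_0)+(S-h)(x_0)+\Theta_t+\int_0^t(\Phi^h+\Psi)(\tilde x_s)\,ds\bigr)$ is exactly $\langle\tilde u_t^{-1}b_t,dB_t\rangle$; the single genuinely computational step is then to check, using the definitions of $\Phi^h$ and $\Psi$, that the finite-variation part equals $-\frac12|b_t|^2\,dt$. Since the two expressions also agree at $t=0$, the closed form coincides with the Girsanov exponential.

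The main obstacle is (ii). As a positive continuous local martingale with $M_0=1$, $M_t$ is a supermartingale and hence $\E M_t\le1$; the content is the reverse inequality. I would localise by the exit times $\tau_k$ of geodesic balls of radius $k$ about $x_0$, for which $\E M_{t\wedge\tau_k}=1$, and then let $k\to\infty$. Here the hypothesis that the $\frac12\Delta^h+Z$ diffusion is complete is precisely what prevents mass from escaping under the transformed measure, giving $\E M_t=1$ and hence the martingale property on every $[0,t]$ with $t<T$. The orthogonality relation is essential rather than cosmetic: without it the drift of $\log M_t$ would retain the non-integrable factor $\frac{r}{T-t}$ and the transform would degenerate as $t\uparrow T$, whereas with it the construction is valid for each $t<T$, which is all that is asserted.
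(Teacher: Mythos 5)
Your proposal follows essentially the same route as the paper's proof: a Girsanov change of drift on the frame bundle, with the closed form of $M_t$ obtained by applying It\^o's formula to $\log\bigl(k_{T-t}e^{S-h}\bigr)(\tilde x_t)$ and exploiting the orthogonality $\langle Z+\nabla S,\nabla r\rangle=0$ from Lemma \ref{lemma3.1} together with the identity defining $\Phi$ (the paper carries this out by explicitly computing $\partial_s\log k_{T-s}$, $|\nabla\log(k_{T-s}e^{-h})|^2$ and $\Delta\log(k_{T-s}e^{-h})$ rather than invoking $\partial_t k_t=\tfrac12\Delta k_t-\Phi k_t$, but the content is identical). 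Your localization argument showing $\E M_t=1$ from the completeness of the $\tfrac12\Delta^h+Z$ diffusion supplies a detail that the paper only asserts, and is correct.
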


\begin{proof} 
For any $u_0\in OM$ with $\pi(u_0)=x_0$, let $u_s$ be the solution of the following equation with initial value $u_0$:
\begin{equation}
d {u}_s = \sum  H_i({u}_s)\circ dB^i_s +(\widetilde{\nabla h}+\widetilde Z) ({u}_s)\,ds.
\end{equation}
Then $u_t$ exists for all time and $ x_t=\pi(u_t)$ has generator $\f 12 \Delta^h+Z$. Let $\tilde u_t$ be the solution to
\begin{equation}
d \tilde{u}_s = \sum  H_i(\tilde{u}_s)\circ dB^i_s +(\widetilde{\nabla \log k_{T-s}}+\widetilde Z+\widetilde {\nabla S}) (\tilde{u}_s)\,ds,\quad  \tilde u_0=u_0
\end{equation}
Then $\tilde x_s=\pi(\tilde u_s)$ is a semi-classical Riemannian bridge, the Markov process with generator $\f 12 \Delta +Z+\nabla S+\nabla (\log k_{T-s})$. 
One simple, and yet crucial observation, is that
$$\frac{1}{2}\Delta^h +Z+\nabla \log (k_{T-s}e^{S-h})=\frac{1}{2}\Delta +Z+\nabla S+\nabla (\log k_{T-s}),$$
and so the generators of $\tilde x_t$  and $x_t$ differ by the  drift $\nabla \log (k_{T-s}e^{S-h})$. 
Since both stochastic processes are well defined before time $T$, they are equivalent on $\F_t$ for any $t<T$.

Let $\{e_i\}_{i=1}^n$ be an orthonormal basis of $\R^n$ and let $$\tilde B_t^i:=B_t^i +\int_0^t d \log( k_{T-s} e^{S-h})(\tilde u_s e_i)\, ds.$$ 
Then $\{\tilde B_t^i\}_{i=1}^n$ are independent Brownian motions on $(\Omega, \F, \F_t, {\mathbb Q})$ where, on each $\sigma$-algebra $\F_t$,  $\mathbb Q$ is defined to be the probability measure equivalent to $\mathbb P$ on each $\F_t$ with  $M_t=\f {d{\mathbb Q}}{d{\mathbb P}}|_{\F_t}$
given by
\begin{equation*}
M_t = \exp \left[ - \sum_{i=1}^m\int_0^\tm  \<\nabla \log (k_{\term-\stm} e^{S-h}) (\tilde x_\stm), \tilde  u_\stm e_i\> dB_\stm^i -{1\over 2} \int_0^\tm | \nabla \log (k_{\term-\stm}e^{S-h}) |_{(\tilde x_\stm)}^2 d\stm\right].
\end{equation*}
Now It\^{o}'s formula implies
\begin{equation*}
\begin{aligned}
&\log {(e^{S-h}k_{\term-\tm})(\tilde{x}_{\tm}) }= \text{ }\log {(e^{S-h} k_\term)(x_0)}
+\sum_{i=1}^m \int_0^{\tm}  \<\nabla \log ( k_{\term-\stm}e^{S-h})  (\tilde x_\stm), \tilde  u_\stm e_i\> dB_\stm^i \\
&+\int_0^{\tm} |\nabla \log (k_{\term-\stm}e^{S-h})(\tilde x_\stm)|^2\,d\stm+\int_0^{\tm} \left({\partial \over \partial s} +{1\over 2} \Delta^h +Z\right)\log ( k_{\term-\stm}e^{S-h})(\tilde x_\stm) \,d\stm,
\end{aligned}
\end{equation*}
using which we may eliminate the stochastic integral appearing in the formula for $M_t$. Indeed,  by the relation (\ref{laplacian-r}) we see that
\begin{equation}
\begin{aligned}
\f{\partial}{\partial s} \log k_{T-s} &=\f n  {2(T-s)}-\f {r^2} {2(T-s)^2},\\
|\nabla \log (k_{T-s}e^{- h})|^2&=\f {r^2}{(T-s)^2}
+\f 14|\nabla \log J|^2+\f {rd r(\nabla \log J)}{T-s}-2\< \nabla  h, \nabla \log (k_{T-s})\>+|\nabla  h|^2,\\
\Delta \log (k_{T-s}e^{- h}) &=-\f n {T-s} -\f {rd r (\nabla \log J)}{T-s}-\f 12 \Delta(\log J)-\Delta  h.
\end{aligned}\nonumber
\end{equation}
We have the following identity:
\begin{equation*}
\begin{aligned}
&\f 1 2|\nabla \log (k_{\term-\stm}e^{-h})|^2 + \left({\partial \over \partial s} +{1\over 2} \Delta^h +Z\right)\log( k_{T-s}e^{-h})\\
 &=\f 18 |\nabla \log J|^2+\f 1 2 |\nabla h|^2-\f 1 4 \Delta(\log J)-\f 1 2\Delta h-|\nabla h|^2+\<Z, \nabla \log (k_{T-s} e^{-h})\>\\
 &=\f 18 |\nabla \log J|^2-\f 1 4 \Delta(\log J)-\f 1 2\Delta h-\f 12 |\nabla h|^2+\<Z, \nabla (\log k_{T-s}e^{-h})\>,
\end{aligned}
\end{equation*}
from which we deduce the formula with non-vanishing  $S$ ,
\begin{equation*}
\begin{aligned}
&\f 1 2|\nabla \log (k_{\term-\stm}e^{-h+S})|^2 + \left({\partial \over \partial s} +{1\over 2} \Delta^h +Z\right)\log( k_{T-s}e^{-h+S})\\
 =&\f 12 |\nabla S|^2+\<\nabla S, \nabla \log (k_{T-s}e^{- h})\>+\f 12 \Delta^h S+\<Z, \nabla S\>
 +\<Z, \nabla \log (k_{T-s}e^{-h})\>\\
 &+\f 18 |\nabla \log J|^2-\f 1 4 \Delta(\log J)-\f 1 2\Delta h-\f 12 |\nabla h|^2.
\end{aligned}
\end{equation*}
Since $\nabla S+Z$ vanishes on radial directions, see the proof for Lemma \ref{lemma3.1}, the first 
line on the right hand side of the equation is
\begin{equation*}
\begin{aligned}
 &\f 12 |\nabla S|^2-\<\nabla S+Z, \nabla h\>+\f 12 \Delta S+\<\nabla h, \nabla S\>+\<Z, \nabla S\>\\
 &= \f 12 |\nabla S|^2+\f 12 \Delta S+\<Z, \nabla S-\nabla h\>.
\end{aligned}
\end{equation*}
Finally we obtain

\begin{equation*}
\begin{aligned}
&\f 1 2|\nabla \log (k_{\term-\stm}e^{-h+S})|^2 + \left({\partial \over \partial s} +{1\over 2} \Delta^h +Z\right)\log( k_{T-s}e^{-h+S})\\\
 &= \f 12 |\nabla S|^2+\f 12 \Delta S+\<Z, \nabla S-\nabla h\>+\Phi-\f 1 2\Delta h-\f 12 |\nabla h|^2
\end{aligned}
\end{equation*}
and consequently,
$$
\begin{aligned}
\log {(e^{S-h}k_{\term-\tm})(\tilde{x}_{\tm}) }= \text{ }&\log {(e^{S-h} k_\term)(x_0)}
+\sum_{i=1}^m \int_0^{\tm}  \<\nabla \log ( k_{\term-\stm}e^{S-h})  (\tilde x_\stm), \tilde  u_\stm e_i\> dB_\stm^i \\
&+\f 12 \int_0^{\tm} |\nabla \log (k_{\term-\stm}e^{S-h})(\tilde x_\stm)|^2\,d\stm
+\int_0^{\tm} (\Phi^h+\Psi)(\tilde x_\stm) \,d\stm,
\end{aligned}$$
and $$M_t=\f {(e^{S-h} k_\term)(x_0)}{(e^{S-h}k_{\term-\tm})(\tilde{x}_{\tm}) }\exp\left(\int_0^{\tm} (\Phi^h+\Psi)(\tilde x_\stm) \,d\stm\right),$$
completing the proof.
\end{proof}

Elworthy and Truman's proof of the following theorem, for the case $h=0$,  was inspired by semiclassical mechanics. They used a semiclassical bridge which arrives at $y_0$ at time $\term+\lambda$ and took the limit as $\lambda \downarrow 0$. We give a slightly modified proof, generalising their result for $\Delta^h$, the method of which will later be used to derive a derivative formula. The following generalises formulas in \cite{Elworthy-Truman-81,Watling1}.

\begin{theorem}\label{eltruform}
Let $V\in C^{0,\alpha}\cap L_\infty$ and suppose that the Brownian motion with drift $\nabla h+Z$ does not explode.  Suppose that  $\beta^h_t$ converges in $L^1$ as $t\to T$, which holds if $\Phi^h+\Psi-V$ is bounded above.  Then
\begin{equation}\label{elaform-1} 
p^{h,Z,V}_T(x_0,y_0) =e^{(h-S)(y_0)-(h-S)(x_0)}k_T(x_0, y_0)\E \left[ \exp\left(\int_0^T (\Phi^h+\Psi-V)(\tilde{x}_s)ds\right) \right].
\end{equation}
\end{theorem}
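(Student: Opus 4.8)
The plan is to establish an exact identity valid for every $t<T$ and then pass to the limit $t\uparrow T$, exploiting that the semi-classical Riemannian bridge terminates at $y_0$. First I would record the pre-limit identity. By the Chapman--Kolmogorov (semigroup) property for the Feynman--Kac kernel, together with the Feynman--Kac representation of the semigroup $P^{h,Z,V}$ in terms of the $\frac12\Delta^h+Z$ diffusion $x_\cdot$, one obtains for each $0<t<T$ the exact identity
\begin{equation*}
p^{h,Z,V}_T(x_0,y_0)=\E\left[\V_t\,p^{h,Z,V}_{T-t}(x_t,y_0)\right],
\end{equation*}
since $p^{h,Z,V}_{T-t}(\cdot,y_0)$ is a bounded smooth function for fixed $t<T$ and $\V_t$ is bounded.

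Next I would apply the Girsanov change of measure of Lemma~\ref{lemma-Girsanov} to the $\F_t$-measurable functional $\V_t\,p^{h,Z,V}_{T-t}(x_t,y_0)$, replacing $x_\cdot$ by the semi-classical Riemannian bridge $\tilde x_\cdot$ at the cost of the Radon--Nikodym factor $M_t$. Substituting the explicit form of $M_t$ and absorbing $\V_t=\exp(-\int_0^t V(\tilde x_s)\,ds)$ into the exponential term produces $\beta^h_t$, giving the exact identity
\begin{equation*}
p^{h,Z,V}_T(x_0,y_0)=(e^{S-h}k_T)(x_0)\,\E\left[\frac{p^{h,Z,V}_{T-t}(\tilde x_t,y_0)}{e^{(S-h)(\tilde x_t)}\,k_{T-t}(\tilde x_t,y_0)}\,\beta^h_t\right],
\end{equation*}
whose left-hand side does not depend on $t$.

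Then I would let $t\uparrow T$. Three ingredients drive the limit: since $r(\tilde x_t)=d(\tilde x_t,y_0)$ is a Bessel bridge we already know $\tilde x_t\to y_0$ almost surely, so by continuity $e^{(S-h)(\tilde x_t)}\to e^{(S-h)(y_0)}=e^{-h(y_0)}$ using $S(y_0)=0$; the hypothesis gives $\beta^h_t\to\beta^h_T$ in $L^1$; and the small-time semi-classical asymptotics yield $p^{h,Z,V}_{T-t}(\tilde x_t,y_0)/k_{T-t}(\tilde x_t,y_0)\to 1$. Granting the interchange of limit and expectation, the bracketed expectation converges to $e^{-(S-h)(y_0)}\,\E[\beta^h_T]=e^{h(y_0)}\,\E[\beta^h_T]$, so that $p^{h,Z,V}_T(x_0,y_0)=e^{(S-h)(x_0)+h(y_0)}k_T(x_0,y_0)\,\E[\beta^h_T]$; since $(S-h)(x_0)+h(y_0)=(h-S)(y_0)-(h-S)(x_0)$, this is exactly \eqref{elaform-1}.

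The main obstacle is the final limit. I must show that $p^{h,Z,V}_s(x,y_0)/k_s(x,y_0)\to 1$ as $s\downarrow 0$ with enough uniformity in $x$ near $y_0$ to evaluate it along the random endpoint $\tilde x_t$, and then justify passing the limit inside the expectation. For the uniformity I would invoke the near-diagonal short-time heat-kernel expansion on a manifold with a pole, where the Gaussian factor in $k_s$ captures the leading term, controlling the ratio on a neighbourhood of $y_0$ that the bridge eventually enters. For the interchange I would establish uniform integrability of the integrand: the boundedness above of $\Phi^h+\Psi-V$ controls $\beta^h_t$ (this is exactly what secures the assumed $L^1$ convergence), while the near-diagonal kernel bounds keep the ratio $p^{h,Z,V}_{T-t}/k_{T-t}$ bounded along the bridge, so that dominated convergence applies. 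The remaining points---measurability and boundedness of the functional, the applicability of Lemma~\ref{lemma-Girsanov}, and the bookkeeping of the $e^{S-h}$ factors---are routine.
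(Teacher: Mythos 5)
Your set-up coincides with the paper's up to and including the Girsanov step: the paper likewise reduces everything, via Lemma~\ref{lemma-Girsanov}, to the identity $\int_M f(y)\,p^{h,Z,V}_t(x_0,y)\,dy = k_T(x_0,y_0)\,e^{(S-h)(x_0)}\,\E\bigl[f(\tilde x_t)\,\beta^h_t\,/\,\bigl(k_{T-t}(\tilde x_t)e^{(S-h)(\tilde x_t)}\bigr)\bigr]$ for $t<T$. The divergence is in the choice of $f$, and hence in what must be controlled as $t\uparrow T$. You take $f=p^{h,Z,V}_{T-t}(\cdot,y_0)$ via Chapman--Kolmogorov, which forces you to prove that $p^{h,Z,V}_s(x,y_0)/k_s(x,y_0)\to 1$ as $s\downarrow 0$ along the random points $x=\tilde x_{T-s}$, together with a dominating bound. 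This is a genuine gap. Under the hypotheses actually in force (only a pole, non-explosion, $V$ H\"older and bounded, $\Phi^h+\Psi-V$ bounded above; no bounded geometry, no curvature-derivative bounds, and an operator carrying the weight $h$ and a non-gradient drift $Z$), that ratio asymptotic is not an available off-the-shelf result: the Minakshisundaram--Pleijel expansion you cite concerns the unweighted Laplacian and is uniform only on compact sets, which does not produce a global dominating function for $\tilde x_t$ (the bridge converges to $y_0$ a.s.\ but is not confined to any fixed compact set before time $T$). Worse, the statement you need is precisely the small-time instance of \eqref{elaform-1} itself (take $T$ small there and note $\E[\beta^h_T]\to 1$), so invoking it is essentially circular.

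The paper sidesteps this entirely by testing against $f=\phi\,k_{T-r}$ with $\phi\in C_K^\infty$, $\phi(y_0)=1$ and $r<t$, and taking the iterated limit $r\uparrow t$ followed by $t\uparrow T$. The gain is that the factor $k_{T-r}(\tilde x_t)/k_{T-t}(\tilde x_t)$ appearing inside the expectation is an explicit elementary expression that tends to $1$ as $r\uparrow t$, uniformly on the compact support of $\phi$; after this inner limit no singular term remains, and the outer limit uses only $\tilde x_t\to y_0$ a.s., continuity of $S-h$, and the assumed $L^1$-convergence of $\beta^h_t$. The identification of $\lim_{t\uparrow T}\lim_{r\uparrow t}P^{h,Z,V}_t(\phi k_{T-r})(x_0)$ with $p^{h,Z,V}_T(x_0,y_0)$ is obtained by pulling back to $T_{y_0}M$, where $k_{T-r}$ acts as an approximate identity, using only $J(y_0)=1$. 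If you wish to keep your route you must supply an independent, globally dominated proof of the near-diagonal asymptotics for $p^{h,Z,V}$ relative to $k$; otherwise the test-function device is the correct repair.
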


\begin{proof}
Let $\phi$ be a smooth function with compact support and such that $\phi(y_0)=1$. Denote by $E_t$ the standard Gaussian kernel in the tangent space $T_{y_0} M$. Then, by a change of variables, we see that
\begin{equation}
\begin{aligned}
&\text{ }\lim_{t\uparrow T}\lim_{r\uparrow t} P^{h,Z,V}_t (\phi k_{T-r})(x_0)= \lim_{t\uparrow T} \lim_{r\uparrow t} \int_M p^{h,Z,V}_t(x_0,y)\phi(y)k_{T-r}(y)\,dy\\
=& \text{ } \lim_{t\uparrow T}\lim_{r\uparrow t} \int_{T_\pole M} (p^{h,Z,V}_t(x_0,\cdot)\cdot\phi \cdot J^{1/2})(\exp_\pole v)E_{T-r}(v)\,dv \\
=& \text{ }(p^{h,Z,V}_T(x_0,\cdot)\cdot \phi\cdot J^{1/2})(\exp_\pole 0_\pole)
=\text{ } p^{h,Z,V}_T (x_0,\pole)
\end{aligned}\nonumber
\end{equation}
where $0_\pole$ denotes the origin of the tangent space $T_\pole M$. We have used the fact that $p^{h,Z,V}_t(x_0,\cdot)\cdot \phi\cdot J^{1/2}$ has compact support with $J(\exp_\pole 0_\pole)=J(\pole)=1$.

Suppose that $f \in L_\infty$ and set \begin{equation}
\beta^{h,Z}_t=\exp\left(  \int_0^t (\Phi^h+\Psi-V)(\tilde{x}_s)ds\right).
\end{equation}
 Then Lemma \ref{lemma-Girsanov} implies that for $0 \leq t < T$,
\begin{equation}\label{girsap}
\begin{aligned}
\int_M f(y) p^{h,Z,V}_t(x_0,y)\,dy =\E [\V_t f(x_t)]=
k_T(x_0,y_0) e^{(S-h)(x_0)}\mathbb{E}\left[\f { f(\tilde x_t)\beta^{h,Z}_t}{k_{T-t}(\tilde x_t)e^{(S-h)(\tilde x_t)}} \right].
\end{aligned}
\end{equation}

Thus, by taking $f=\phi \cdot k_{T-r}$ in equation (\ref{girsap}) for $r<t$, we observe, since $\tilde x_t$ converges to $y_0$ a.s., that
\small
\begin{equation}
\begin{aligned}
p^{h,Z,V}_T(x_0,\pole) =& e^{(S-h)(x_0)}k_T(x_0, y_0) \lim_{t\uparrow T} \lim_{r\uparrow t}
\E \left[ \phi(\tilde x_t) \f { k_{T-r}(\tilde x_t) }{k_{T-t}(\tilde x_t)e^{(S-h)(\tilde x_t)}} \exp\left(\int_0^t (\Phi^h+\Psi-V)(\tilde{x}_s)ds\right)\right]\\
=\text{ }&e^{(S-h)(x_0)-(S-h)(y_0)}k_T(x_0, y_0)\E\left[ \exp\left(\int_0^T (\Phi^h+\Psi-V)(\tilde{x}_s)ds\right)\right],
\end{aligned}\nonumber
\end{equation}
\normalsize
Since $\phi$ has compact support the limit $r\to t$ is trivial to justify,   those terms singular at $t=T$ cancel out after taking the limit. The second follows from
the assumption. The proof is complete. 
\end{proof}

At this point we compare formula (\ref{elaform-1}), valid for all time $t\le T$ with  Varadhan's asymptotic relation \cite{Varadhan} and the asymptotic expansion of Minakshisundaram and  Pleijel  \cite{Minakshisundaram-Pleijel} for small time. The first states that  $\lim_{t\downarrow 0} 2t\log p_t \rightarrow -d^2$, uniformly on compact sets, which was proved in  \cite{Varadhan} for operators of the form $\sum_{i,j} a_{i,j}{\partial ^2} {\partial x_i \partial x_j}$ in $\R^n$ and extends to heat kernels on complete Riemannian manifolds by Molchanov \cite{Molchanov-75},  the completeness  condition was added in ( Azencott et al  \cite{Azencott-small-time}) and is needed.
 The latter is an asymptotic expansion, states that there are smooth functions $H_i$ defined on $M\times M\setminus \cut(M)$ such that
$$p_t(x,y)\sim  (2\pi t)^{-\frac{n}{2}} e^{-\frac{d^2(x,y)}{2t}}\sum_{i=0}^\infty H_i(x,y)t^i$$
with $H_0(x,y)={J_x^{-\frac{1}{2}}(y)}$, as $t\to 0$. Both converge uniformly on compact subsets of  $M\times M\setminus \cut(M)$, where $\cut(M)$ denotes the cut locus of $M$. 
See also \cite{Ariyoshi-Hino}. 

Our results  are global and  are valid only for manifolds with a pole. We hope that a suitable local result 
hold on a more  general manifold, using the cut-off procedure in \cite{Chen-Li-Wu-cut-off}.

\subsection{First order formulas}
In this section we take the drift field $Z$ to be zero.
Set ${D\over dt}\tilde W_t = \tilde u_t \frac{d}{dt} \tilde u_t^{-1} \tilde W_t$ where $\tilde u_t$ is the process on the orthonormal frame bundle, solving  the equation (\ref{horizontal-2}). We denote by $\tilde{W}_t$  the solutions to the following equation along the semi-classical Brownian bridge $\tilde x_t$ where $\tilde x_t=\pi(\tilde u_t)$:
\begin{equation}\label{damped2}
{D \over dt }\tilde{W}_t=-{1\over 2}  {\Ric}_{\tilde x_t}^{\#}(\tilde{W}_t)+\Hess(h)({\tilde W_t}), \quad \tilde W_0 = {\id}_{T_{x_0}M}.
\end{equation}

\begin{theorem}\label{thm:fof}
Suppose that $y_0$ is a pole, that $\rho^h$ is bounded from below, and that $\Phi^h$ is bounded above. 
Let $\tilde{W}$ be the solution to the damped parallel translation equation (\ref{damped2}). For $r\in [0, T)$ and $t\in [0,T]$  let
\begin{equation*}
d\tilde{B}_r = dB_r + \tilde{u}_r^{-1}\nabla (\log (k_{T-r}e^{-h}))(\tilde{x}_r)\,dr, \quad  
\beta_t^h=\exp\left(  \int_0^t (\Phi^h-V)(\tilde{x}_s)ds\right).\end{equation*}
\begin{enumerate}
\item[(a)]
Suppose that  $V \in  BC^1$. Then
\begin{equation*}\begin{aligned}
&T\, e^{-(h(y_0)-h(x_0))} \f{(dp^{h,V}_T(\cdot,y_0))_{x_0} }{k_T(x_0, y_0)}\\
&=\E \left[ \beta_T^h \left(\int_0^T \< \tilde{W}_r(\cdot),\tilde{u}_r d\tilde{B}_r\> -\int_0^T (T-r) dV(\tilde{W}_r(v)) dr\right)\right]. \end{aligned}
\end{equation*}
\item [(b)] Suppose that $V$ is H\"{o}lder continuous and bounded, then
\begin{equs}
&e^{-(h(y_0)-h(x_0))}\f{d {p^{h,V}_T} (\cdot,y_0)_{x_0}}{k_T(x_0, y_0)}=
 \frac{1}{T} \E \left[ \beta_T^h\int_0^T \< \tilde{W}_s(\cdot),\tilde{u}_s d\tilde B_s\rangle\right]\\
 &\hskip 45pt+ { \E \left[ \beta_T^h \int_0^T V (\tilde x_{T-s})  \;e^{-\int_{T-s}^T V(\tilde x_u)du }
\f  {1}{T-s}\int_0^{T-s} \< \tilde W_r(\cdot),\tilde u_r d\tilde B_r\> \right]}.
\end{equs}
\end{enumerate}
\end{theorem}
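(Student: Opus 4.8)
The plan is to transfer the general--manifold first-order formulas of Section~\ref{section2} to the semi-classical bridge by means of the Girsanov change of measure of Lemma~\ref{lemma-Girsanov}, and then to extract the kernel by the concentration limit already used to prove Theorem~\ref{eltruform}. Throughout $Z=0$, so $S\equiv0$, $\Psi\equiv0$, and the density in Lemma~\ref{lemma-Girsanov} is the purely geometric $M_t=\frac{k_T(x_0)e^{-h(x_0)}}{k_{T-t}(\tilde x_t)e^{-h(\tilde x_t)}}\exp(\int_0^t\Phi^h(\tilde x_s)\,ds)$.

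For part~(a) I would apply Theorem~\ref{thm:maintheorem} to $f=\phi\,k_{T-r}$, with $\phi$ a compactly supported cutoff satisfying $\phi(y_0)=1$, taking the semigroup time and the stochastic-integral time both equal to some $t<T$; after Fubini this reads $dP^{h,V}_t f(v)=\frac1t\E[\V_t f(x_t)\int_0^t\langle W_s(v),u_s\,dB_s\rangle]-\frac1t\E[\V_t f(x_t)\int_0^t (t-s)\,dV(W_s(v))\,ds]$. The key structural point is that the damped parallel translation \eqref{damped} involves only $\Ric$ and $\Hess h$, so it is form-invariant under the change of measure and becomes \eqref{damped2} along $\tilde x$; moreover $\int_0^t\langle W_s(v),u_s\,dB_s\rangle$ is a functional of the path, and since the martingale part of the bridge decoded through the $h$-Brownian equation is exactly $\tilde B$, it transforms into $\int_0^t\langle\tilde W_s(v),\tilde u_s\,d\tilde B_s\rangle$ with no spurious drift. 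Because the full factor $\V_t$ is present, multiplying it by $M_t$ converts the geometric exponential into $\beta_t^h$, exactly as in \eqref{girsap}. Letting $r\uparrow t\uparrow T$ and using the change of variables into $T_{y_0}M$ from the proof of Theorem~\ref{eltruform} (so that $J(y_0)=1$, the Gaussian $k_{T-r}$ concentrates at $y_0$, and the singular $k$-factors cancel) then gives the stated identity with prefactor $e^{h(y_0)-h(x_0)}$.

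For part~(b) I would not differentiate the Hölder potential but deduce the formula from part~(a). Starting from part~(a) for a smooth approximation of $V$, I would convert the term $\int_0^T(T-r)\,dV(\tilde W_r(v))\,dr$, which contains $dV$, into the potential-weighted double integral $\int_0^T V(\tilde x_{T-s})e^{-\int_{T-s}^T V}\frac1{T-s}\int_0^{T-s}\langle\tilde W_r(\cdot),\tilde u_r\,d\tilde B_r\rangle\,ds$ by a stochastic integration by parts against the martingale $\int_0^\cdot\langle\tilde W_r,\tilde u_r\,d\tilde B_r\rangle$ together with the Markov property, mirroring the relation between Theorem~\ref{thm:maintheorem} and Theorem~\ref{Feynman-Kac-kernel-1} on a general manifold; crucially this keeps the weight $\beta_T^h$. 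Since the resulting expression no longer involves $dV$, I would then approximate the bounded Hölder $V$ by smooth bounded functions and pass to the limit by dominated convergence, using that $|\tilde W_r|$ is bounded and $\beta_t^h$ converges in $L^1$. One could instead transfer Theorem~\ref{Feynman-Kac-kernel-1} directly, but then the leading term carries the heat weight $\exp(\int_0^T\Phi^h)$ rather than $\beta_T^h$ and a Duhamel resummation is needed to recover the stated form, so the route through part~(a) is cleaner.

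The step I expect to be the main obstacle is the passage to the terminal time $t\uparrow T$. The equivalence in Lemma~\ref{lemma-Girsanov} holds only on $\F_t$ for $t<T$, the density $M_t$ degenerates and $\tilde x_t\to y_0$ as $t\uparrow T$, so I must justify that $\int_0^t\langle\tilde W_s(v),\tilde u_s\,d\tilde B_s\rangle$ converges and that the limit commutes with both the spatial derivative $d_{x_0}$ and the expectation. Writing $d\tilde B$ as its martingale part plus the singular drift $\tilde u_s^{-1}\nabla\log(k_{T-s}e^{-h})\,ds$, the bound $|\tilde W_s|\le e^{-\frac12\int_0^s\rho^h(\tilde x_u)\,du}$ from $\rho^h$ bounded below controls the martingale part, while the Bessel-bridge law of $r(\tilde x_t)$ controls the drift part near $y_0$; the assumption that $\Phi^h-V$ is bounded above gives the $L^1$ convergence $\beta_t^h\to\beta_T^h$. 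Combined with the boundedness of $dP^{h,V}_t f$ from Theorem~\ref{Feynman-Kac-kernel-1}, these furnish the domination needed, just as the singular factors were shown to cancel in the proof of Theorem~\ref{eltruform}.
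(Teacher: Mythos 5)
Your part (a) is essentially the paper's proof: apply Theorem \ref{thm:maintheorem} to $f=\phi\, k_{T-t}$ with $\phi$ a cutoff equal to $1$ at $y_0$, pass to the semi-classical bridge by Lemma \ref{lemma-Girsanov} (under which $\V_t M_t$ produces the weight $\beta_t^h$, the pair $(W,B)$ becomes $(\tilde W,\tilde B)$, and the damped translation equation is form-invariant), and let $t\uparrow T$ so that the singular $k$-factors cancel and $\phi(\tilde x_t)e^{h(\tilde x_t)}\to e^{h(y_0)}$. Your justification of the passage to the terminal time (the bound $|\tilde W_s|\le e^{-Ks/2}$ from $\rho^h\ge K$, the Bessel-bridge control of the singular drift in $\tilde B$, the $L^1$ convergence of $\beta^h_t$) is in fact more explicit than the paper's one-line appeal to dominated convergence, and is consistent with the estimates the paper later carries out in Corollary \ref{pole-estimate}.

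Part (b) is where the gap lies. The paper's written proof only treats (a); the template for (b) would be to transfer Theorem \ref{Feynman-Kac-kernel-1} by the identical Girsanov-plus-limit procedure, and you correctly identify the obstruction: the leading term of Theorem \ref{Feynman-Kac-kernel-1} is $\frac1t\E\bigl[f(x_t)\int_0^t\langle W_s(v),u_s\,dB_s\rangle\bigr]$ with no factor $\V_t$, so after the change of measure it acquires the weight $\exp\bigl(\int_0^T\Phi^h(\tilde x_s)\,ds\bigr)$ rather than $\beta_T^h$. But your proposed repair, deriving (b) from (a) by ``a stochastic integration by parts against $\int_0^\cdot\langle\tilde W_r,\tilde u_r\,d\tilde B_r\rangle$ together with the Markov property, mirroring the relation between Theorems \ref{thm:maintheorem} and \ref{Feynman-Kac-kernel-1}'', is a claim rather than a computation. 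On a general manifold those two formulas are not obtained from one another by a pathwise manipulation: one rests on Lemma \ref{lemma3-V} (differentiating the parabolic equation) and the other on the variation-of-constants identity $P_t^{h,V}f=P^h_tf+\int_0^tP^h_{t-s}(VP^{h,V}_sf)\,ds$ combined with Lemma \ref{lem-one}(b), and the corresponding semigroup identities are not available under the bridge law, since the semi-classical bridge is time-inhomogeneous with the singular drift $\nabla\log k_{T-s}$ in its generator. To close the argument you must either apply the Girsanov-limit procedure term by term to the Duhamel decomposition of $dP^{h,V}_t(\phi k_{T-t})$, tracking which exponential weight each term actually acquires, or write out explicitly the integration by parts you invoke; as it stands, the step that produces the weight $\beta_T^h$ in both terms of (b) — precisely the point your own (correct) remark about the weight mismatch isolates — is missing.
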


\begin{proof}
For all $0< t \leq T$ and $v\in T_{x_0}M$ we have, by Theorem \ref{thm:maintheorem} and Fubini's theorem, that
\begin{equation}
\begin{aligned}
dP^{h,V}_tf(v) &=\text{ }\f 1 t\E \left[ \V_t f(x_t) \int_0^t \< W_s(v),u_s dB_s\>-(t-s)dV(W_s(v)) \,ds\right]\\
&=\text{ } \f 1t\E \left[ \V_t f(x_t) \int_0^t \< W_s(v),u_s dB_s-(t-s)\nabla Vds\>\right].
\end{aligned}\nonumber
\end{equation}
Therefore, it follows that, for all $0< t <T$,
\begin{equation}
\begin{aligned}
dP^{h,V}_t(\phi k_{T-t})(v) &=\text{ }\frac{1}{t}\E \left[ \V_t (\phi k_{T-t})(x_t) \int_0^t \< W_r(v),u_r dB_r-(t-r)\nabla Vdr\>\right],
\end{aligned}\nonumber
\end{equation}
 where $\phi$ is a smooth function with compact support and $\phi(y_0)=1$, as in the proof of Theorem \ref{eltruform}. By Lemma \ref{lemma-Girsanov} this yields
\begin{equation}
\begin{aligned}
&dP^{h,V}_t(\phi k_{T-t})(v)\\
&= \frac{1}{t} e^{-h(x_0)}k_T(x_0,y_0)\E \left[ \phi(\tilde x_t)\beta_t^h 
e^{h(\tilde x_t)}\int_0^t \< \tilde{W}_r(v),\tilde{u}_r d\tilde{B}_r-(t-r)\nabla Vdr\>\right].
\end{aligned}\nonumber
\end{equation}
We now take limits. For the left-hand side of the previous equation we see that
\begin{equation}
\begin{aligned}
&\text{ }\lim_{t\uparrow T}dP^{h,V}_t(\phi k_{T-t})(v)
=\lim_{t\uparrow T}d\left(\int_M p^{h,V}_t(\cdot,y)\phi(y)k_{T-t}(y)\,dy\right)(v)\\
=&\text{ } \lim_{t\uparrow T}\int_{M} dp^{h,V}_t(\cdot,y)(v)\phi(y)k_{T-t}(y)\,dy
=dp^{h,V}_T(\cdot,y)(v)
\end{aligned}\nonumber
\end{equation}
where the final equality follows from the compactness of the support of $\phi$ and the fact that $J(y_0)=1$. 
For the right-hand side, we use $\lim_{t\to T} \tilde x_t=y_0$, 
apply the dominated convergence theorem, using upper and lower bounds on $\Phi^h$ and $\Ric-2 \Hess h$, respectively, obtaining an upper bound on $|\tilde W_r|$ and the result as claimed.
\end{proof}

 In the paper \cite[Theorem 6]{CLY} by  Cheng, Li and Yau, an estimate of the following form
 is given for Riemannian manifold of bounded curvature,
$$\nabla p_t(x_0, y_0)\le C \delta^{-\alpha(n)}(x_0)t^{-(n+\f 12)}e^{- \f{\alpha(n)d^2(x_0, y_0)}{t}}.$$
where $C$ is a constant depending on $n$, on the  bound on the sectional curvature and on $ t$, 
$ \delta(x_0)$ is a constant depending on the injectivity radius of $x_0$, and $\alpha(n)$ is a universal constant. Let $\underline \Ric_x$ denote the lower bound of the Ricci curvature at $x$.
We have the following corresponding estimates.

\begin{corollary}\label{pole-estimate}
Suppose that $y_0$ is a pole,  $\Phi^h-V$ is bounded above,   $\underline\Ric-2 \Hess (h)\ge K$, and $\nabla h$, and $\nabla \log J$ are bounded. Suppose that $V \in  BC^1$.
Then for an explicit constant $C$ depending only on $n$, $K$, $T$, and  $|\nabla \log J|_\infty$ (and locally bounded in $T$), we have
$$\f{|\nabla p^{h,V}_T(\cdot,y_0)|_{x_0}}{k_T(x_0,y_0)} \le Ce^{h(y_0)-h(x_0)} |\beta^h_T|_\infty \left(\f  {d(x_0, y_0)}T+ |\nabla h|_\infty+1+ \f 1 {\sqrt T}+T|dV|_\infty\right) .$$
\end{corollary}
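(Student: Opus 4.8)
The plan is to bound the right-hand side of the formula in Theorem~\ref{thm:fof}(a). That identity reads
\[
T\,e^{h(x_0)-h(y_0)}\,\frac{(dp^{h,V}_T(\cdot,y_0))_{x_0}}{k_T(x_0,y_0)}=\E\left[\beta^h_T\Big(\int_0^T\langle\tilde W_r(\cdot),\tilde u_r\,d\tilde B_r\rangle-\int_0^T(T-r)\,dV(\tilde W_r(\cdot))\,dr\Big)\right],
\]
the expectation being over the semi-classical bridge $\tilde x$, whose driving Brownian motion in (\ref{horizontal-2}) is $B$. Since $\Phi^h-V$ is bounded above, $\beta^h_T\le|\beta^h_T|_\infty<\infty$, so I first pull $|\beta^h_T|_\infty$ out and reduce to estimating the $L^1$-norms of the two bracketed terms applied to a unit vector $v$. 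Throughout I use the bound $|\tilde W_r(v)|\le e^{-Kr/2}|v|$, which follows from $\underline\Ric-2\Hess h\ge K$ and (\ref{damped2}) exactly as in the proof of Proposition~\ref{prop:derivativeestiamte}, and which is $\le c_K|v|$ on $[0,T]$ with $c_K$ depending only on $K,T$.

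The potential term is immediate: $\E\big[\int_0^T(T-r)|dV(\tilde W_r(v))|\,dr\big]\le|dV|_\infty|v|\int_0^T(T-r)e^{-Kr/2}\,dr=O(T^2)|v|$, which after division by $T$ furnishes the $T|dV|_\infty$ contribution. The stochastic term is the substance of the proof. The key observation is that $\tilde B$ carries a drift under the bridge law: since $d\tilde B_r=dB_r+\tilde u_r^{-1}\nabla\log(k_{T-r}e^{-h})(\tilde x_r)\,dr$, we split
\[
\int_0^T\langle\tilde W_r(v),\tilde u_r\,d\tilde B_r\rangle=\underbrace{\int_0^T\langle\tilde W_r(v),\tilde u_r\,dB_r\rangle}_{\text{martingale}}+\int_0^T\big\langle\tilde W_r(v),\nabla\log(k_{T-r}e^{-h})(\tilde x_r)\big\rangle\,dr.
\]
For the martingale, Cauchy--Schwarz and the It\^o isometry give $\E|\cdots|\le\big(\int_0^T e^{-Kr}\,dr\big)^{1/2}|v|\le c_K\sqrt T\,|v|$, producing the $1/\sqrt T$ term after division by $T$.

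For the finite-variation part I would use (\ref{kt}) and (\ref{laplacian-r}) to write explicitly
\[
\nabla\log(k_{T-r}e^{-h})=-\frac{r}{T-r}\,\nabla r-\tfrac12\nabla\log J-\nabla h,
\]
so that, since $|\nabla r|=1$, the integrand is bounded by $|\tilde W_r(v)|\big(\frac{r(\tilde x_r)}{T-r}+\tfrac12|\nabla\log J|_\infty+|\nabla h|_\infty\big)$. After integrating in $r$ and dividing by $T$, the two bounded pieces yield the $|\nabla h|_\infty$ term and a constant absorbed into $C$ --- this is where the dependence of $C$ on $|\nabla\log J|_\infty$, and the summand $+1$, come from.

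The delicate piece, and where I expect the main obstacle, is $\E\big[\int_0^T\frac{r(\tilde x_r)}{T-r}|\tilde W_r(v)|\,dr\big]$. Here the point is that the radial process $r(\tilde x_r)=d(y_0,\tilde x_r)$ is, by the computation behind Lemma~\ref{lemma3.1}, an $n$-dimensional Bessel bridge from $d(x_0,y_0)$ to $0$ on $[0,T]$, perturbed only by the bounded radial drift $\langle\nabla r,\nabla h\rangle$ coming from the $\nabla h$-term in (\ref{horizontal-2}). Applying It\^o's formula to $r(\tilde x_r)^2$ gives a linear differential inequality for $m(r):=\E[r(\tilde x_r)^2]$ whose unperturbed solution is $m_0(r)=\big(\tfrac{T-r}{T}\big)^2 d^2(x_0,y_0)+n\tfrac{r(T-r)}{T}$; a Gr\"onwall/comparison argument absorbs the bounded perturbation while preserving the decay $m(r)=O(T-r)$ as $r\uparrow T$. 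Consequently $\E[r(\tilde x_r)]\le\sqrt{m(r)}\lesssim\tfrac{T-r}{T}d(x_0,y_0)+\sqrt{T-r}$, so that $\frac{\E[r(\tilde x_r)]}{T-r}$ stays integrable at $r=T$ and $\int_0^T\frac{\E[r(\tilde x_r)]}{T-r}\,dr\lesssim d(x_0,y_0)+\sqrt{nT}$. Dividing by $T$ yields precisely the $\frac{d(x_0,y_0)}{T}$ term together with a further $1/\sqrt T$ contribution. Collecting the four contributions, factoring out $e^{h(y_0)-h(x_0)}|\beta^h_T|_\infty$, and taking the supremum over unit $v$ gives the claimed estimate, with $C$ depending only on $n$, $K$, $T$ and $|\nabla\log J|_\infty$ and locally bounded in $T$.
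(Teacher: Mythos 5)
Your proposal follows essentially the same route as the paper's proof: the same splitting of $\int_0^T\langle\tilde W_r(v),\tilde u_r\,d\tilde B_r\rangle$ into a $dB$-martingale (bounded via the It\^o isometry) plus the drift $\nabla\log(k_{T-r}e^{-h})=-\frac{r}{T-r}\nabla r-\frac12\nabla\log J-\nabla h$, the same bound $|\tilde W_r(v)|\le e^{-Kr/2}|v|$ for the potential and drift terms, and the same Bessel-bridge second-moment identity $\E\, d^2(\tilde x_r,y_0)=\left(\frac{T-r}{T}\right)^2d^2(x_0,y_0)+\frac{nr}{T}(T-r)$ to control $\int_0^T e^{-Kr/2}\frac{\E[d(\tilde x_r,y_0)]}{T-r}\,dr$ by $C(d(x_0,y_0)+\sqrt T)$. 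The only deviation is your Gr\"onwall treatment of a putative extra radial drift $\langle\nabla r,\nabla h\rangle$; the paper takes the radial process to be exactly the $n$-dimensional Bessel bridge (consistent with the bridge generator $\frac12\Delta+\nabla\log k_{T-s}$ actually used in Lemma \ref{lemma-Girsanov}) and applies the moment formula directly, so your extra comparison step is harmless but not needed.
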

\begin{proof}
Since $\tilde W$ is defined by (\ref{damped2}) and $|\tilde W_t(v)|\le e^{-\f 12 Kt}|v|$, $$\begin{aligned} 
&\left|  \int_0^T \< \tilde{W}_r(v),\tilde{u}_r d\tilde{B}_r\> -\int_0^T (t-r) dV(\tilde{W}_r(v)) dr \right| \\
&= \left|  \int_0^T \< \tilde{W}_r(v),\tilde{u}_r d{B}_r\> + \int_0^T \left\<\tilde W_r(v), \nabla \log (e^{-h} k_{T-r})(\tilde{x}_r)\right \>\,dr
-\int_0^T (t-r) dV(\tilde{W}_r(v)) dr\>\right| \\
&\le \left|  \int_0^T \< \tilde{W}_r(v),\tilde{u}_r d{B}_r\>\right| +|v|\;  \int_0^T e^{-Kr/2} \left|\nabla \log k_{T-r}(\tilde x_r)-\nabla h)(\tilde{x}_r)\right|dr\\
&\quad+|v|\; |dV|_\infty \int_0^T e^{-Kr/2} (t-r) dr.
 \end{aligned}$$
Since  $\E  \left|  \int_0^T \< \tilde{W}_r(v),\tilde{u}_r d{B}_r\>\right|\le \sqrt{\E \int_0^T |\tilde W_r(v)|^2dr}
 \le \left( \int_0^T e^{-Kr}|v|^2dr\right)^{\f 12}$, we apply Theorem \ref{thm:fof} to see that
$$\begin{aligned} 
&\left|e^{h(x_0)-h(y_0)} \f{\nabla p^{h,V}_T(\cdot,y_0)(v)}{k_T(x_0, y_0)} \right|\\
\le&\f 1 T|v|\;  |\beta^h_T|_\infty \left(\int_0^T e^{-Kr}dr \right)^{\f 12}+\f 1 T|v|\;  |\beta^h_T|_\infty |dV|_\infty \int_0^T e^{-Kr/2} (T-r) dr\\
+&\f 1 T|v|\;  |\beta^h_T |_\infty\E \left[  \int_0^T e^{-Kr/2} (|\nabla h|_\infty+ |\nabla \log k_{T-r}(\tilde{x}_r)|)dr\right].
 \end{aligned}$$
The sum of the first two terms on the right hand side are nicely bounded by 
$$|v|\; |\beta^h_T|_\infty  \f 1 T(\sqrt{C_1(K,T) }+|dV|_\infty \,C_2(K, T))$$ where $C_1$ and $C_2$ are the obvious integrals of order $T$: $C_1(K,T)=\frac {1-e^{-KT} } K$ and $C_2= \int_0^T e^{-Kr/2} (T-r) dr$. They are  of order $T$ for small time.
Since $\nabla \log k_t(\cdot, y_0)=-\f {\nabla d^2}{2t} -\f 12 \nabla \log J$, the last term can be estimated using
the Euclidean bridge. Then
$$\begin{aligned} 
&\E \left[  \int_0^T e^{-Kr/2} |\nabla \log k_{T-r}(\tilde{x}_r)|\,dr\right]\\
&\le \f 12     \int_0^T e^{-Kr/2}\, |\nabla \log J|_\infty\, dr
+\E \left[   \int_0^T e^{-Kr/2} \f { d(\tilde x_r, y_0)}{T-r} \,dr\right]\\
&\le \f 12    |\nabla \log J|_\infty \int_0^T e^{-Kr/2} \, dr
+ \int_0^T  e^{-Kr/2} \f {\sqrt{ \E d^2(\tilde x_r, y_0)}}{(T-r)}\, dr.
 \end{aligned}$$
 Since $r_t=d(\tilde x_t, y_0)$ is the $n$-dimensional Bessel bridge, 
 $$ \E d^2(\tilde{x}_r , y_0)=\left(\f {T-r}T\right)^2 d^2(x_0, y_0)+\f {nr} T (T-r),$$
 and consequently,
 $$ \int_0^t  e^{-Kr/2} \f {\sqrt{ \E d^2((\tilde{x}_r) , y_0)}}{(T-r)} dr \le  \int_0^t  e^{-Kr/2} \left(    \f 1 T d(x_0, y_0) +\sqrt{ \f {nr} T }\f 1 {\sqrt{T-r}}\right)  dr,$$
 which is bounded by $ C (d(x_0, y_0)+\sqrt T)$.
 This completes the proof.
\end{proof}

Theorem \ref{thm:fof} together with the formulas for the fundamental solutions, we obtain formulas for the logarithmic derivatives of the fundamental solutions. 

\begin{proposition}
\label{gradient-kernel}
 Assume that $y_0$ is a pole for $M$,  $\underline{\Ric}-2\Hess (h)$ is bounded below
 and $\Phi^h$ is bounded above. Set $Z_T ^h= \frac{\beta^h_T}{\E(\beta^h_T)}=\frac{\exp\left(  \int_0^t (\Phi^h-V)(\tilde{x}_s)ds\right)}
{\E \exp\left(  \int_0^t (\Phi^h-V)(\tilde{x}_s)ds\right)}$.
 \begin{enumerate}
\item [(a)] If $V$ is H\"{o}lder continuous and bounded, then
 \begin{equs}
d {\log p^{h,V}_T} (\cdot,y_0)_{x_0}=
 &\frac{1}{T} \E \left[ Z_T^h\int_0^T \< \tilde{W}_s(\cdot),\tilde{u}_s d\tilde B_s\rangle\right]
 \\
&+ { \E \left[ Z_T^h \int_0^T V (\tilde x_{T-s})  \;e^{-\int_{T-s}^T V(\tilde x_u)du }
\f  {1}{T-s}\int_0^{T-s} \< \tilde W_r(\cdot),\tilde u_r d\tilde B_r\> \right]}.
\end{equs}
 \item[(b)]  If $V\in BC^1$, then
\begin{equation*}
d\log p^{h,V}_T(\cdot,y_0)(v)
= \frac{1}{T}\E \left[ Z_T^h \int_0^T \< \tilde{W}_r(v),\tilde{u}_r d\tilde{B}_r-(T-r)\nabla Vdr\>\right].
\end{equation*}
 \end{enumerate}
\end{proposition}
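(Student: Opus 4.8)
The plan is to read off the logarithmic derivative as the quotient of two formulas already established in this subsection: the numerator $dp^{h,V}_T(\cdot,y_0)_{x_0}$ is supplied by Theorem~\ref{thm:fof} and the denominator $p^{h,V}_T(x_0,y_0)$ by Theorem~\ref{eltruform}, and one invokes the elementary identity $d\log p^{h,V}_T(\cdot,y_0)_{x_0} = dp^{h,V}_T(\cdot,y_0)_{x_0}/p^{h,V}_T(x_0,y_0)$. Both theorems apply under the present hypotheses ($y_0$ a pole, $\rho^h$ bounded below, $\Phi^h$ bounded above), and since $\beta_T^h=\exp(\int_0^T(\Phi^h-V)(\tilde x_s)\,ds)$ is strictly positive we have $\E[\beta_T^h]>0$, hence $p^{h,V}_T(x_0,y_0)>0$; thus $\log p^{h,V}_T(\cdot,y_0)$ is well defined and differentiable, and the division is legitimate.

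The key simplification comes from the standing assumption $Z=0$ of this subsection. Then the geodesic path average $S$ vanishes identically, so $\Psi\equiv 0$ and the weight $\beta^{h,Z}_t$ of Theorem~\ref{eltruform} collapses to the present $\beta_t^h$; likewise $e^{(h-S)(y_0)-(h-S)(x_0)}=e^{h(y_0)-h(x_0)}$. Consequently the zeroth-order formula specialises to
\begin{equation*}
p^{h,V}_T(x_0,y_0)=e^{h(y_0)-h(x_0)}\,k_T(x_0,y_0)\,\E[\beta_T^h].
\end{equation*}
Both first-order formulas of Theorem~\ref{thm:fof} carry precisely the same prefactor $e^{h(y_0)-h(x_0)}k_T(x_0,y_0)$, so upon dividing by the display above this prefactor cancels exactly, and each unnormalised expectation $\E[\beta_T^h(\cdots)]$ is thereby replaced by $\E[Z_T^h(\cdots)]$ with $Z_T^h=\beta_T^h/\E[\beta_T^h]$.

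Concretely, for part~(a) I would take the H\"older case, namely part~(b) of Theorem~\ref{thm:fof}, as the numerator; cancelling $k_T(x_0,y_0)e^{h(y_0)-h(x_0)}$ against the denominator yields directly the two-term expression with weight $Z_T^h$. For part~(b) I would instead use the $BC^1$ case, part~(a) of Theorem~\ref{thm:fof}, whose numerator $\frac1T\E[\beta_T^h(\int_0^T\langle\tilde W_r(v),\tilde u_r\,d\tilde B_r\rangle-\int_0^T(T-r)\,dV(\tilde W_r(v))\,dr)]$ again sheds its prefactor after division and gives the compactified integral $\frac1T\E[Z_T^h\int_0^T\langle\tilde W_r(v),\tilde u_r\,d\tilde B_r-(T-r)\nabla V\,dr\rangle]$. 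The only bookkeeping care needed is that the two parts of Theorem~\ref{thm:fof} are indexed in the opposite order to the two parts here, so one must match the correct regularity hypothesis on $V$ to the correct input formula.

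There is essentially no analytic obstacle: the whole content resides in Theorems~\ref{eltruform} and~\ref{thm:fof}, and this proposition is merely their quotient. The two small points deserving a word of justification are the strict positivity of $p^{h,V}_T(x_0,y_0)$, which makes the logarithm and its differential meaningful, and the reduction forced by $Z=0$, namely $S\equiv 0$ and $\Psi\equiv 0$, which guarantees that the weights $\beta^{h,Z}_T$ and $\beta_T^h$ appearing in the two input theorems literally coincide so that the cancellation of prefactors is exact.
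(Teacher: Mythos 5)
Your proposal is correct and is exactly the paper's argument: the paper disposes of this proposition in one line as an immediate consequence of Theorems~\ref{eltruform} and~\ref{thm:fof} (dividing the first-order formulas by the zeroth-order one, with $Z=0$ so that $S\equiv0$, $\Psi\equiv0$ and the prefactor $e^{h(y_0)-h(x_0)}k_T(x_0,y_0)$ cancels, turning $\beta_T^h$ into $Z_T^h$). Your added remarks on the positivity of $p^{h,V}_T(x_0,y_0)$ and the swapped ordering of the two cases are accurate bookkeeping that the paper leaves implicit.
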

These are immediate consequences of  Theorems \ref{Feynman-Kac-kernel-1}, \ref{eltruform} and \ref{thm:fof}.

Finally we conclude the paper with  an estimate for the gradient
of the logarithmic of the Feynman-Kac kernel. \begin{corollary}
Assume that $y_0$ is a pole, $V\in BC^1$, $\rho^h$ is bounded from below, $\Phi^h$ is bounded above,
$|\nabla h|$ and $|\nabla \log J|$ are bounded. Then
$$\left|\nabla \log p^{h,V}_T(\cdot,y_0)\right|_{x_0}\le C |Z_T^h|_\infty \left( \f {d(x_0, y_0)}T+1+|\nabla h|_\infty+\f 1{\sqrt T}+T\,|dV|_\infty \right),$$
where $C$ is a constant.
\end{corollary}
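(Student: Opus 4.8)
The plan is to derive the bound directly from the logarithmic derivative formula of Proposition~\ref{gradient-kernel}(b) and then reuse, essentially verbatim, the estimation already carried out in the proof of Corollary~\ref{pole-estimate}. Since $\beta^h_T=\exp(\int_0^T(\Phi^h-V)(\tilde x_s)\,ds)$ is strictly positive, the normalised variable $Z_T^h=\beta^h_T/\E[\beta^h_T]$ is non-negative with $\E[Z_T^h]=1$, so it may be bounded pointwise by $|Z_T^h|_\infty$. Evaluating Proposition~\ref{gradient-kernel}(b) on a vector $v\in T_{x_0}M$, using $0\le Z_T^h\le |Z_T^h|_\infty$, and then taking the supremum over $|v|=1$ gives
$$\left|\nabla\log p^{h,V}_T(\cdot,y_0)(v)\right|\le \f{|Z_T^h|_\infty}{T}\,\E\left|\int_0^T \langle \tilde W_r(v),\tilde u_r\,d\tilde B_r-(T-r)\nabla V\,dr\rangle\right|.$$
The expectation on the right is exactly the quantity controlled in Corollary~\ref{pole-estimate}; the only change is that the prefactor $e^{h(y_0)-h(x_0)}|\beta^h_T|_\infty/k_T(x_0,y_0)$ appearing there is replaced here by $|Z_T^h|_\infty$. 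These prefactors match because passing to the logarithm divides $\nabla p^{h,V}_T$ by $p^{h,V}_T$, and Theorem~\ref{eltruform} (with $Z=0$) identifies $p^{h,V}_T(x_0,y_0)=e^{h(y_0)-h(x_0)}k_T(x_0,y_0)\E[\beta^h_T]$.

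Next I would estimate the integral term by term. Substituting $d\tilde B_r=dB_r+\tilde u_r^{-1}\nabla\log(e^{-h}k_{T-r})(\tilde x_r)\,dr$ decomposes it into a stochastic integral, a drift term carrying $\nabla\log(e^{-h}k_{T-r})$, and the $\nabla V$ term. The bound $|\tilde W_r(v)|\le e^{-Kr/2}|v|$, which follows from the damped translation equation (\ref{damped2}) together with $\rho^h\ge K$, controls all three. By It\^o's isometry and Cauchy--Schwarz the martingale term is at most $|v|\sqrt{C_1(K,T)}$ with $C_1(K,T)=(1-e^{-KT})/K$, contributing the $1/\sqrt T$ summand after the factor $1/T$; the $\nabla V$ term is bounded by $|v|\,|dV|_\infty\int_0^T (T-r)e^{-Kr/2}\,dr$, which after the prefactor $1/T$ produces the $T|dV|_\infty$ summand. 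For the drift term I would use $\nabla\log k_{T-r}(\cdot,y_0)=-\nabla d^2/(2(T-r))-\f12\nabla\log J$ together with the bounds on $\nabla h$ and $\nabla\log J$, so that the $|\nabla h|_\infty$ and $|\nabla\log J|_\infty$ pieces integrate against $e^{-Kr/2}$ to give the summands $|\nabla h|_\infty$ and a constant (the term $1$).

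The only genuinely delicate point is the remaining part of the drift term, namely $\int_0^T e^{-Kr/2}\,\sqrt{\E d^2(\tilde x_r,y_0)}/(T-r)\,dr$, where the factor $1/(T-r)$ is singular as $r\uparrow T$. This is handled exactly as in Corollary~\ref{pole-estimate}: because the radial process $d(\tilde x_r,y_0)$ is an $n$-dimensional Bessel bridge, one has the explicit identity $\E d^2(\tilde x_r,y_0)=\left(\f{T-r}{T}\right)^2 d^2(x_0,y_0)+\f{nr}{T}(T-r)$, whence $\sqrt{\E d^2(\tilde x_r,y_0)}/(T-r)\le d(x_0,y_0)/T+\sqrt{nr/T}/\sqrt{T-r}$; integrating against $e^{-Kr/2}$ gives a bound of the form $C(d(x_0,y_0)+\sqrt T)$, which after division by $T$ produces the $d(x_0,y_0)/T$ and $1/\sqrt T$ summands. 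Collecting all contributions and absorbing the $K$-, $n$-, $T$- and $|\nabla\log J|_\infty$-dependent constants into a single $C$ yields the claimed inequality. I expect no obstacle beyond this bookkeeping, since every estimate is already present in the proof of Corollary~\ref{pole-estimate} and only the prefactor has changed.
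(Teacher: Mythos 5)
Your proposal is correct and follows essentially the same route as the paper, which likewise invokes Proposition~\ref{gradient-kernel}(b), notes that the formula is that of Corollary~\ref{pole-estimate} with $\beta^h_T$ replaced by its normalisation $Z^h_T$, and repeats the same term-by-term estimates (It\^o isometry for the martingale part, the Bessel-bridge identity for the singular drift part, and the elementary bounds for the $\nabla h$, $\nabla\log J$ and $\nabla V$ contributions). Your additional remark explaining why the prefactors match via Theorem~\ref{eltruform} is a useful clarification but does not change the argument.
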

The proof will be entirely analogous to that for Corollary \ref{pole-estimate}. 
We simply use part (b) of Proposition \ref{gradient-kernel} and observe that the formula for 
$\nabla \log p^{h,V}_T(\cdot,y_0)$ is similar to that for 
$e^{h(x_0)-h(y_0)}\f{\nabla  p^{h,V}_T(\cdot,y_0)}{k_t(\cdot, y_0)}$ with $\beta_t^h$  replaced by its average $Z_T^h$. Since $Z_T^h$ is bounded by our assumption, the remaining estimate is the same as that used for the proof of Corollary \ref{pole-estimate}. 
\\

\vspace{2mm}
\noindent
{\bf Acknowledgment.} 
 It is our pleasure to thank Professor K. D. Elworthy  for helpful conversations.
\def\cprime{$'$} \def\cprime{$'$} \def\cprime{$'$} \def\cprime{$'$}

\end{document}